\DeclareFontFamily{U}{matha}{\hyphenchar\font45}
\DeclareFontShape{U}{matha}{m}{n}{
	<5> <6> <7> <8> <9> <10> gen * matha
	<10.95> matha10 <12> <14.4> <17.28> <20.74> <24.88> matha12
}{}
\DeclareSymbolFont{matha}{U}{matha}{m}{n}
\DeclareMathSymbol{\Lt}{3}{matha}{"CE}
\DeclareMathSymbol{\Gt}{3}{matha}{"CF}
\DeclareSymbolFont{mathc}{OML}{txmi}{m}{it}
\DeclareMathSymbol{\varvv}{\mathord}{mathc}{118}
\DeclareMathSymbol{\varnu}{\mathord}{mathc}{"17}
\DeclareSymbolFont{mathd}{OML}{ztmcm}{m}{it}
\DeclareMathSymbol{\varalpha}{\mathord}{mathd}{11}
\DeclareMathSymbol{\varlambda}{\mathord}{mathd}{21}
\def\valpha{\text{\scalebox{0.86}{$\varalpha$}}}
\DeclareMathSymbol{\depsilon}{\mathord}{mathd}{15}
\def\vepsilon{\text{\scalebox{0.88}{$\depsilon$}}}
\DeclareMathSymbol{\varchi}{\mathord}{mathd}{31}
\def\tw{\textit{w}}
\def\nd{\mathrm{d}}
\def\oo{\mathrm{o}}
\def\CalJ{\text{\usefont{U}{dutchcal}{m}{n}J}\hskip 0.5pt}
\def\CalL{\text{\usefont{U}{dutchcal}{m}{n}L}\hskip 0.5pt}
\newcommand{\GL}{{\mathrm {GL}}}
\newcommand{\sstyle}{\scriptstyle}
\newcommand{\ra}{\rightarrow}
\def\-{^{-1}}
\def\sasymp{\text{ \small $\asymp$ }}
\def\mod{\mathrm{mod}\, }
\def\sumx{\sideset{}{^\star}\sum}
\def\sumxx{\sideset{}{^{\star\hskip -1pt \star}}\sum}
\def\Var{\text{\it Var\,}}
\def\lp {\left (}
\def\rp {\right )}
\def\Voronoi{Vorono\"{i}   }
\renewcommand{\Im}{{\mathrm{Im} }}
\renewcommand{\Re}{{\mathrm{Re} }}
\def\shskip{\hskip 1pt}
\newcommand{\delete}[1]{}
\theoremstyle{plain}
\newtheorem{thm}{Theorem}[section] \newtheorem{cor}[thm]{Corollary}
\newtheorem{lem}[thm]{Lemma}
\newtheorem {rem}[thm]{Remark}
\numberwithin{equation}{section}
\begin{document}

	\title{Hybrid Weyl-type bound for $p$-power twisted  $\mathrm{GL} (2)$ $L$-functions}
	\author[Z. Gao, S. Luo, and Z. Qi]{Zhengxiao Gao, Shu Luo, and Zhi Qi}
	\address{School of Mathematical Sciences, Zhejiang University, Hangzhou, 310027, China}
	\email{zxgao@zju.edu.cn}
	
	\address{School of Mathematics, Shandong University, Jinan, 250100, China}
	\email{shu.luo@mail.sdu.edu.cn}

	\address{School of Mathematical Sciences, Zhejiang University, Hangzhou, 310027, China}
	\email{zhi.qi@zju.edu.cn}

	\begin{abstract}
	Let $g$ be a fixed holomorphic cusp form of arbitrary level and nebentypus. Let  $\chiup$ be a primitive character of prime-power modulus $q = p^{\gamma}$. 	In this paper, we prove the following hybrid Weyl-type subconvexity bound
		\begin{align*}
			L (1/2 + it, g \otimes \chiup) \Lt_{g, p, \vepsilon} (  (1+|t|) q )^{1/3+ \vepsilon}
		\end{align*}
	for any $\vepsilon > 0$. 
	\end{abstract}

	\subjclass[2010]{11F66}
	\keywords{$L$-functions, subconvexity, Bessel delta method}
	\thanks{The third author was supported by National Key R\&D Program of China No. 2022YFA1005300 and National Natural Science Foundation of China No. 12071420.}
	\maketitle

	\section{Introduction}
	
\subsection*{Backgrounds}

It is a central problem in analytic number theory to bound a certain family of $L$-functions $L(s, \pi)$ on the critical line $\Re (s)= {1}/{2}$. The subconvexity problem is to improve (usually in a sub-family) the trivial convexity bound:
\begin{align*}
	L({1}/{2}+it,\pi)\Lt_{\vepsilon} C(\pi, t)^{1/4+\vepsilon},
\end{align*}
where $C( \pi, t)$ is the so-called analytic conductor.
See \cite{Iwaniec-Sarnak-GAFA} and \cite[\S 5]{IK}.

Two classical results in the $\mathrm{GL}_1$ setting are the Weyl bound for the Riemann zeta function $\zeta (s)$ and the Burgess bound for the Dirichlet $L$-function $L(s, \chiup)$ ($\chiup$ is a Dirichlet character of modulus $q$):
\begin{align}\label{0eq: Weyl}
	\zeta({1}/{2}+it)\Lt_{\vepsilon}(1+|t|)^{1/6+\vepsilon},
\end{align}
  and
\begin{align}\label{0eq: Burgess}
	L({1}/{2}+it,\chiup )\Lt_{t, \vepsilon} q^{3/16+\vepsilon}.
\end{align}
See \cite{Weyl-1916,Weyl-1921,Littlewood,Landau} and \cite[Theorem 5.5]{Titchmarsh} for \eqref{0eq: Weyl} and  \cite{Burgess-1,Burgess-2} for \eqref{0eq: Burgess}. It is a consensus that the Weyl exponent $1/6$ in \eqref{0eq: Weyl} and the Burgess exponent $3/16$ in \eqref{0eq: Burgess} are two natural barriers in the subconvexity problem.

The first instances of hybrid subconvexity bounds for Dirichlet  $L({1}/{2}+it,\chiup )$ with both $t$ and $q$ varying were given by Heath-Brown \cite{Heath-Brown,Heath-Brown2}, and in his second paper \cite{Heath-Brown2}, the hybrid Burgess-type bound was achieved:
\begin{align}\label{0eq: hybrid Burgess}
	L({1}/{2}+it,\chiup )\Lt_{ \vepsilon} (  (1+ |t|) q)^{3/16+\vepsilon}.
\end{align}
Heath-Brown's idea was to generalize and combine the methods of van der Corput and   Burgess.

In the ground-breaking work of Conrey and Iwaniec \cite{CI-Cubic}, by considering the cubic moment of $\GL_2 \times \GL_1$ $L$-central values $L (1/2, g \otimes \chiup)$ over a $\GL_2$-spectral family ($g$ is either Maass or Eisenstein), among other results, the Weyl-type bound 	was proven in the $	q$-aspect:
	 \begin{align}\label{0eq: Conrey-Iwaniec}
	 	L({1}/{2}+it,\chiup )\Lt_{t, \vepsilon} q^{1/6+\vepsilon},
	 \end{align}
under the assumption that $\chiup$ is quadratic and $q$ is square-free.
Later, with the same assumption on $\chiup$, Young \cite{Young-Cubic} proved the hybrid  Weyl-type bound
	\begin{align}\label{0eq: hybrid Conrey-Iwaniec}
		L({1}/{2}+it,\chiup )\Lt_{ \vepsilon} (  (1+|t|) q)^{1/6+\vepsilon}.
	\end{align}
Recently, Petrow and Young  \cite{PY-2020,PY-2019}  showed successfully that the Weyl-type bound \eqref{0eq: hybrid Conrey-Iwaniec} holds for every $\chiup$ unconditionally.

Thanks to the developments of the van der Corput method and the Bombieri--Iwaniec method (we refer the readers to the treatises \cite{GK-vdCorput,Huxley}), many sub-Weyl exponents for $\zeta (1/2+it)$ have been obtained  over the past century. Now Bourgain \cite{Bourgain-2017} has the best exponent $13/84 \approx 0.1547$. On the other hand, Mili\'cevi\'c \cite{Mil-Sub-Weyl} has introduced a $p$-adic analogue of  the van der Corput method only recently, and it enables him to  achieve a sub-Weyl exponent $\approx 0.1645$ for the Dirichlet family $L (1/2, \chiup)$ to $p$-power moduli $q = p^{\gamma}$ (for $\gamma$ large).  However, these are currently the only two instances  for which sub-Weyl subconvexity is known.

In the $\GL_2$ or $\GL_2 \times \GL_1$ setting, the subconvexity problem has been investigated intensively since 1980's.

The first result is the celebrated Weyl-type bound of Good \cite{Good}:
\begin{align}\label{0eq: GL(2)-Weyl}
	L({1}/{2}+it, g )\Lt_{g,   \vepsilon}   (1+|t|)^{1/3+\vepsilon}
\end{align}
for a fixed holomorphic cusp form $g$ of full level. Good used the spectral theory of automorphic functions.

The bound in \eqref{0eq: GL(2)-Weyl} was recovered by Jutila in his treatise on $\GL_2$ exponential sums \cite{Jut87} (see also \cite[\S \S 10, 20]{Huxley}), using Farey dissection, Vorono\"i summation, and van der Corput theory. Jutila's method became quite influential. It was developed in \cite{Meu87,Jutila97,BMN} to extend \eqref{0eq: GL(2)-Weyl} to the cases where $g$ is Maass or holomorphic of arbitrary level and nebentypus. Furthermore, Blomer and Mili\'cevi\'c \cite{Blomer-Mil-p-adic} introduced the $p$-adic analogue of Jutila's method, especially the $p$-adic counterparts to Farey  dissection and van der Corput theory, and they proved the Weyl-type bound in the $q$-aspect when $q$ is a $p$-power. More precisely, for a holomorphic or Maass form $g$ of full level and a primitive character of modulus $q = p^{\gamma}$ ($p$ odd), Theorem 2 in \cite{Blomer-Mil-p-adic} reads
\begin{align}\label{0eq: Blomer-Milicevic}
	L (1/2 + it, g \otimes \chiup) \Lt_{g, \vepsilon} (1+|t|)^{5/2} p^{7/6} q^{1/3+\vepsilon}.
\end{align}
Further, this was generalized to the case of arbitrary $g$ in \cite{Assing}.

The purpose of this paper is to improve \eqref{0eq: Blomer-Milicevic} and prove the hybrid Weyl-type bound in both the $t$- and $q$-aspects. Before stating our theorem, however, we continue to discuss the history of some related $\GL_2 \times \GL_1$ results and methods.

For the case of arbitrary $\chiup$, the first subconvexity bound in the $q$-aspect is due to Duke, Friedlander, and Iwaniec \cite{DFI-1} (see also the note added at the end of \cite{DFI-2}):
\begin{align}\label{0eq: DFI}
	L (1/2 + it, g \otimes \chiup) \Lt_{g, \vepsilon} (1+|t|)^{2}   q^{5/11+\vepsilon}
\end{align}
for holomorphic $g$ of full level. For this, they introduced a new $\delta$-symbol method (usually called the DFI $\delta$-method nowadays) and the amplification technique. Bykovski\u{\i}  \cite{Bykovskii} used his trace formula for certain mean values involving $L(s, g \otimes \chiup)$ to improve the right-hand side of \eqref{0eq: DFI} into $ (1+|t|)^{1/2+\vepsilon} q^{3/8+\vepsilon}$ for holomorphic $g$ of arbitrary level and trivial nebentypus. Blomer and Harcos \cite{BH-Hybrid} pushed Bykovski\u{\i}'s method to its limit and generalized his Burgess-type bound (in the $q$-aspect) to the general case that $g$ may be Maass  and be allowed to have any nebentypus.

The first hybrid subconvexity bound was also obtained in Blomer--Harcos \cite{BH-Hybrid}:
\begin{align}\label{0eq: BH-Hybrid}
	L (1/2 + it, g \otimes \chiup) \Lt_{g, \vepsilon} ((1+|t|)    q)^{19/40+\vepsilon} ,
\end{align}
for any newform $g$ (holomorphic or Maass) of trivial nebentypus; actually their bound also has an explicit dependence on the level of $g$.  Munshi \cite{Munshi-Circle-I} used the DFI $\delta$-method to improve the exponent $19/40$ in \eqref{0eq: BH-Hybrid} into $4/9$ for general $g$ of arbitrary nebentypus but level co-prime to $q$. Following the seminal work of Michel and Venkatesh \cite{Michel-Venkatesh-GL2} in which the $\GL_2$ subconvexity problem is settled (in all aspects),  H. Wu \cite{WuHan-GL2} obtained (over number fields) the Burgess-like exponent $3/8 + \theta /4$, where $\theta$ is  any exponent towards the Ramanujan--Petersson conjecture. Further, by the method of double Dirichlet series in \cite{Hofstein-Husle-MDS}, C. I. Kuan \cite{Kuan-Hybrid} proved a Weyl--Burgess-like hybrid bound:
\begin{align}\label{0eq: Kuan-Hybrid}
	L (1/2 + it, g \otimes \chiup) \Lt_{g, \vepsilon} (1+|t|)^{1/(3-2\theta) + \vepsilon}    q^{3/8+\theta/4+\vepsilon} ,
\end{align}
provided that $g$ is holomorphic and has trivial nebentypus. Recently, in the case that $q$ is prime ($g$ is still arbitrary), via the Bessel $\delta$-method, Y. Fan and Q. Sun \cite{Fan-Sun} obtained the genuine Weyl--Burgess-type hybrid bound:
\begin{align}\label{0eq: Fan-Sun-Hybrid}
	L (1/2 + it, g \otimes \chiup) \Lt_{g, \vepsilon} (1+|t|)^{1/3 + \vepsilon}    q^{3/8 +\vepsilon} .
\end{align}

In the past decade, a new approach via variants of the $\delta$-symbol or circle method has been developed in a series of works by Munshi \cite{Munshi-Circle-I,Munshi-Circle-II,Munshi-Circle-III,Munshi-Circle-IV} to tackle the subconvexity problem for $\GL_3$. It turns out that his ideas are also successful in the $\GL_2$ setting. For example, they are used to recover
\begin{itemize}
	\item [(1)] the Burgess-type bound  of Bykovski\u{\i}, Blomer, and Harcos for $L (1/2,  g \otimes  \chiup)$ in \cite{Munshi17.5,AHLS} (for $q$ prime),
	\item[(2)] the Weyl-type bound of Good for $L(1/2+it, g )$ in \cite{ARSS,Aggarwal-Singh2017,Aggarwal,AHLQ-Bessel-delta},
	\item[(3)] the Weyl-type bound of Blomer and Mili\'cevi\'c for $L (1/2+it, g \otimes \chiup)$ with $\chiup$ of $p$-power moduli in \cite{MS-2019},
\end{itemize}
and, as mentioned earlier,  to improve
\begin{itemize}
	\item [(4)]  the hybrid bound  of Blomer and Harcos for $L(1/2+it, g \otimes \chiup )$ in \cite{Munshi-Circle-I,Fan-Sun}.
\end{itemize}
In (3) the Weyl-type bound of Munshi and Singh \cite[Theorem 1.2]{MS-2019} reads: 
\begin{align}\label{0eq: Munshi-Singh}
	L (1/2 + it, g \otimes \chiup) \Lt_{g, t, \vepsilon}  p^{(\gamma - \lfloor \gamma / 3 \rfloor)/ 2 + \vepsilon \gamma},
\end{align}
for $g$   of square-free level and $\chiup$   of $p$-power modulus $p^{\gamma}$. 

Finally, we remark that hybrid Weyl-type   bounds were achieved  for certain self-dual (twisted) $\GL_2$ $L$-functions in \cite{Young-Cubic,PY-2020,PY-2019}, but  the self-dual assumption is not required here. Moreover, in similar spirits, other Weyl-type bounds for $\GL_2$ (Rankin--Selberg)  $L$-functions were obtained in \cite{JM-Uniform,LLY,BJN-triple,Nelson-Eisenstein,BFW-totally real,WX}.

 \subsection*{Main results}

 Let $g  \in S^{\star}_k (M, \xiup)$ be a holomorphic cusp newform of level $M$, weight $k$, nebentypus character $\xiup$, with the Fourier expansion
 $$g (z) = \sum_{n=1}^{\infty} \lambdaup_g (n) n^{(k-1)/2} e (n z), \quad e(z) = e^{2 \pi i z} ,$$
 for  $\Im  (z) > 0$. Assume that $g$ is Hecke-normalized so that $ \lambdaup_g (1) = 1$.  Let $\chiup$ be a primitive character of $p$-power modulus $p^{\gamma}$.  Recall that the twisted $L$-function $L(s, g \otimes \chiup)$ is defined by
 $$L(s, g \otimes \chiup) = \sum_{n=1}^{\infty} \frac {\lambdaup_g (n) \chiup(n)} {n^{s}} ,$$
 for $\mathrm{Re}(s) > 1$; it extends to an entire function by analytic continuation.

In this paper, we prove the following hybrid bound for $L(1/2 + it, g \otimes \chiup)$ with Weyl strength in both the $t$- and $q$-aspects.

\begin{thm}\label{main-theorem-Weyl}
	Let $g \in S^{\star}_k (M, \xiup)$ and $\chiup \hskip 1pt (\mod p^{\gamma})$ be as above. Suppose   $(p, 2 M) = 1$.  Then
	\begin{equation}\label{0eq: main-Weyl}
		L\left(1/2+it,  g \otimes \chiup \right)\Lt_{g, \vepsilon}  (1+|t|)^{1/3+\vepsilon} p^{ (\gamma - \lfloor \gamma /3 \rfloor )/2 + \vepsilon \gamma },
	\end{equation}
	with the implied constant depending only on $g$ and $\vepsilon$.
\end{thm}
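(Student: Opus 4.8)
The plan is to prove Theorem \ref{main-theorem-Weyl} via the Bessel $\delta$-method, following the strategy of Fan--Sun \cite{Fan-Sun} for prime moduli but now incorporating the $p$-adic (van der Corput / stationary phase) input of Blomer--Mili\'cevi\'c \cite{Blomer-Mil-p-adic} and Munshi--Singh \cite{MS-2019} to handle $p$-power moduli. First I would use an approximate functional equation to reduce the estimation of $L(1/2+it, g\otimes\chiup)$ to bounding a dyadic sum $S(N) = \sum_{n\sim N} \lambdaup_g(n) \chiup(n) n^{-it}$ with $N$ up to roughly $Q := (1+|t|)q = (1+|t|)p^{\gamma}$; the target \eqref{0eq: main-Weyl} amounts to $S(N) \Lt_{g,\vepsilon} Q^{\vepsilon}(N^{1/2}/q^{1/6})\cdot(\text{something})$, i.e.\ we need to gain a factor of size $q^{1/6}(1+|t|)^{1/6}$ over the trivial bound $N$ when $N\asymp Q$. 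I would insert a conductor-lowering device: since $\chiup$ is supported on residues mod $p^{\gamma}$, write $\chiup(n)$ via additive characters and separate $n$ from the oscillation using a Bessel-function expansion of the delta symbol — this is the ``Bessel $\delta$-method'' of \cite{AHLQ-Bessel-delta, Fan-Sun}, which detects $n=m$ through an integral of $J$-Bessel functions against a sum of Kloosterman-type terms $e(\bar{c}(n-m)/c)$ with $c$ running over a suitable range.

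The core of the argument is then the following: after applying the $\delta$-method we arrive at a sum roughly of the shape
\begin{align*}
	\frac{1}{C}\sum_{c\sim C}\ \sum_{n\sim N}\lambdaup_g(n)\chiup(n) n^{-it}\ \sum_{m\sim N}\overline{\lambdaup_g(m)} m^{it}\, e\!\lp \frac{\overline{c}(n-m)}{c}\rp \int (\cdots)\, J_{k-1}(\cdots)\, \nd x,
\end{align*}
and I would apply Vorono\"i summation (in the version for arbitrary level and nebentypus, e.g.\ from \cite{BMN, Assing}) to \emph{both} the $n$- and $m$-sums. The $n$-sum dualizes to a dual length $N^{\star} \asymp C^2/N$ (up to $t$- and $p$-dependent factors), introducing a hyper-Kloosterman-type or Gauss-sum factor modulo $c$ twisted by $\chiup$; the key point is that because $\chiup$ has modulus $p^{\gamma}$ with $\gamma$ large, one can iterate Vorono\"i / exploit the $p$-power structure so that the modulus effectively shrinks by a factor of $p^{\lfloor\gamma/3\rfloor}$ after a $p$-adic stationary-phase analysis — this is exactly where the ``$\gamma-\lfloor\gamma/3\rfloor$'' in the exponent comes from, mirroring \eqref{0eq: Munshi-Singh}. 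For the $t$-aspect, the archimedean Bessel integral supplies a stationary phase gain of $(1+|t|)^{-1/6}$ by the usual second-derivative / van der Corput estimate, as in Good's bound \eqref{0eq: GL(2)-Weyl} and its $\delta$-method reincarnations.

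After both Vorono\"i applications and the $p$-adic and archimedean stationary-phase evaluations, the $n$- and $m$-variables are reduced to short dual sums, and the surviving main term is an arithmetic sum over $c$ (and the dual variables) which I would bound by a combination of the Weil bound for Kloosterman sums and square-root cancellation in the Gauss sums, together with the $p$-adic evaluation of the relevant exponential sums mod $p^{\gamma}$ (distinguishing carefully between the generic case and the degenerate cases where the $p$-adic phase has a repeated stationary point — these degenerate strata are where the extra $p^{\vepsilon\gamma}$ slack is spent). Choosing the parameter $C$ optimally (roughly $C \asymp (NQ)^{1/3}/q^{?}$, balancing the two dual lengths against the $c$-sum) then yields $S(N)\Lt_{g,\vepsilon} Q^{\vepsilon} N^{1/2}(1+|t|)^{-1/6} p^{-\lfloor\gamma/3\rfloor/2}$ at $N\asymp Q$, which after summing the dyadic pieces gives \eqref{0eq: main-Weyl}.

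The main obstacle I expect is the bookkeeping of the $p$-adic exponential sums at every stage: unlike the prime-modulus case \cite{Fan-Sun}, each Vorono\"i step mod $p^{\gamma}$ produces sums whose $p$-adic valuation structure must be tracked, the stationary phase (Hensel-lifting) argument has genuinely different behavior in the ranges $\gamma$ even/odd and according to how $\mathrm{ord}_p$ of various parameters compares to $\gamma$, and one must ensure the degenerate cases genuinely contribute only $p^{\vepsilon\gamma}$ and not a positive power of $p$. A secondary difficulty is making the archimedean and $p$-adic stationary-phase gains \emph{compatible} — i.e.\ verifying that the optimal choice of $C$ simultaneously extracts $(1+|t|)^{-1/6}$ from the Bessel integral and $p^{-\lfloor\gamma/3\rfloor/2}$ from the arithmetic side without the two competing, which requires the hybrid ranges of $t$ and $q$ to be treated uniformly rather than in separate regimes.
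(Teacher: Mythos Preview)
Your proposal has the right high-level ingredients (approximate functional equation, Bessel $\delta$-method, Vorono\"i, stationary phase, parameter optimization), but the actual structure you sketch differs from the paper's in ways that matter, and a couple of the mechanisms you invoke are not the ones that do the work.

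First, the shape of the sum after the Bessel $\delta$-identity is not what you wrote. There is only \emph{one} copy of the Fourier coefficients: one writes $S(N)=\sum_r \chiup(r)e(f(r))V(r/N)\sum_n\lambdaup_g(n)\,\delta(r=n)$ and applies the Bessel $\delta$ (Lemma~\ref{lem: Bessel-delta}) with a \emph{specific} modulus $h=p^{\hskip 0.5pt\beta}q$, $\beta=2\lfloor\gamma/3\rfloor$, $q$ prime and then averaged over $q\sim Q$. One Vorono\"i is applied, in the \emph{reversed} direction, to the $n$-sum only; there is no second $\overline{\lambdaup_g(m)}m^{it}$ variable and hence no ``Vorono\"i on both sides''. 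After Vorono\"i the $r$-sum (carrying $\chiup$ and $e(f(r))$) is treated by Poisson modulo $p^{\gamma}q$, then Cauchy--Schwarz is applied over the dual $n$-variable, and a second Poisson in $n$ produces the bilinear character sum $\mathfrak{E}_{\chiup}^{\valpha}(n)$ and the integral $\CalL_{\beta\gamma}(x)$. The ``conductor lowering'' is already built into the choice $h=p^{2\lfloor\gamma/3\rfloor}q$ in the $\delta$-identity, not obtained by iterating Vorono\"i.

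Second, the $p$-adic saving does not come from the Blomer--Mili\'cevi\'c $p$-adic van der Corput method, nor from Weil bounds for Kloosterman sums. It comes from an elementary (but delicate) evaluation of the sums $\mathfrak{E}_{\chiup}^{\valpha}(n)$ in Lemma~\ref{lem: bound for C}: one splits $a=a_0+a_1p^{\nu}$ and uses that $\psiup(x)=\chiup(1+xp^{\gamma-\nu})$ is a primitive additive character, reducing to Ramanujan and quadratic Gauss sums. This is where the condition $\valpha\leqslant 2\lfloor\gamma/3\rfloor$ (hence the exponent $(\gamma-\lfloor\gamma/3\rfloor)/2$) enters, and also where the paper corrects an error in \cite[Lemma~5.2]{MS-2019}. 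Your worry about ``degenerate strata'' is in the right spirit, but the actual case analysis is according to $\mathrm{ord}_p(n)$ relative to $\lfloor\valpha/2\rfloor$ and $\valpha$, not according to repeated $p$-adic stationary points.

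Finally, the archimedean and $p$-adic gains do not compete in the way you fear: the integral $\CalL_{\beta\gamma}(x)$ is reduced to the integral $\CalL_{\oo}$ of \cite{AHLQ-Bessel-delta} by a change of variables (proof of Lemma~\ref{lem: J(y,r,m,k,N) <}), so the archimedean analysis is literally the same as in the pure $t$-aspect case, and the $p$-adic and archimedean savings combine multiplicatively. The optimization is $K=T^{2/3}$, $Q=N^{1+\vepsilon}/\sqrt{T}$, with $\beta=2\lfloor\gamma/3\rfloor$ fixed from the start.
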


Theorem \ref{main-theorem-Weyl} will be deduced in \S \ref{sec: hybrid-Weyl} from the following estimate for twisted $\GL_2$ exponential sums with $ \phi (x) = - \log x / 2\pi $ and $T = t$.

\begin{thm}\label{main-theorem2}
Let the setting be as in Theorem {\rm\ref{main-theorem-Weyl}}. 	Let $N, T, \varDelta \geqslant 1$. 
	Let $V (x) \in C_c^{\infty} [1, 2] $. Assume that its total variation $ \text{\it Var\,} (V) \Lt 1$ and that $V^{(j)} (x) \Lt_{j} \varDelta^j$  for $j \geqslant 0$. For  $ \phi (x) \in C^{\infty} (1/2, 5/2) $ satisfying   $ |\phi'' (x)| \Gt 1 $ and  $  \phi^{(j)} (x) \Lt_j 1 $ for $j \geqslant 1$, define
	\begin{align}\label{0eq: defn of f(x)}
		f (x) = T \phi (x/ N) .
	\end{align}
Define
\begin{align}
S_{f, \chiup} (N) =	\sum_{n=1}^\infty \lambdaup_g(n) \chiup (n) e ( f(n) ) V\left(\frac{n}{N}\right) .
\end{align} Then
	\begin{equation}\label{1eq: main bound}
	\begin{split}
		S_{f, \chiup} (N) \Lt_{g, \phi, \vepsilon}    (1 + \varDelta   / T) T^{1/3} p^{ (\gamma - \lfloor \gamma /3 \rfloor )/2  }  N^{  1 / 2  +\vepsilon}    + \frac { (1+\varDelta / T)^{1/2} N^{1+\vepsilon}} {  T^{1/6}  p^{  \lfloor \gamma /3 \rfloor /2 } }    ,
	\end{split}
	\end{equation}
	with the implied constant depending only on $g$,  $\phi$ and $\vepsilon$.
\end{thm}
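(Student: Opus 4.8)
The plan is to combine the Bessel $\delta$-method of \cite{AHLQ-Bessel-delta,Fan-Sun} with the $p$-adic conductor-lowering technique of \cite{Blomer-Mil-p-adic,MS-2019,Mil-Sub-Weyl}. Write $q = p^{\gamma}$ and fix the splitting $\gamma = \gamma_1 + \gamma_2$ with $\gamma_2 = \lfloor \gamma / 3 \rfloor$ and $\gamma_1 = \gamma - \gamma_2$; note $\gamma_1 \geqslant \lceil \gamma / 2 \rceil$. One may assume $\varDelta \leqslant T$ (the general bound follows by weakening the stationary-phase estimates by the displayed factors), and for small $N$ the trivial bound $S_{f, \chiup} (N) \Lt N^{1 + \vepsilon}$ already yields \eqref{1eq: main bound}. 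Since $p$ is odd and coprime to $M$, a Postnikov-type expansion gives, for $(v, p) = 1$, the identity
\[
\chiup (v + p^{\gamma_1} u) = \chiup (v)\, e\!\lp \frac{c_{\chiup}\, \overline{v}\, u}{p^{\gamma_2}} \rp, \qquad (c_{\chiup}, p) = 1,
\]
with $\overline{v}$ an inverse of $v$ modulo $p^{\gamma_2}$. Splitting the $n$-sum into residue classes $n \equiv v \pmod{p^{\gamma_1}}$ therefore reduces the task to estimating $S_{f, \chiup} (N) = \sum_{(v, p) = 1} \chiup (v) S_v$, where
\[
S_v = \sum_{u} \lambdaup_g (v + p^{\gamma_1} u)\, e\!\lp \frac{c_{\chiup}\, \overline{v}\, u}{p^{\gamma_2}} \rp e\bigl( f (v + p^{\gamma_1} u) \bigr)\, V\!\lp \frac{v + p^{\gamma_1} u}{N} \rp
\]
has length $\asymp N p^{-\gamma_1}$ and carries an additive character of modulus only $p^{\gamma_2}$.

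The heart of the argument is then the Bessel $\delta$-method applied to each $S_v$: the equality of $u$ with a fresh variable is detected through a $\delta$-symbol built out of the Voronoi/Bessel kernel of $g$, which is the shape adapted to the $\mathrm{GL} (2)$ oscillation carried by the $\lambdaup_g$-weights. In practice this amounts to applying $\mathrm{GL} (2)$ Voronoi summation to the $\lambdaup_g$-sum over $u$ --- with the additive twist being the progression modulus $p^{\gamma_1}$ combined with the $\delta$-symbol modulus $c$ --- which dualises the coefficients, brings in the holomorphic Bessel kernel (essentially $J_{k - 1}$), and shortens the summation; followed by Poisson summation on the complementary sum carrying $e (f)$ and the character modulo $p^{\gamma_2}$. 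Summing back over $v$ and over $c$, one is left, up to a negligible error and an easily bounded main term, with an expression of the shape
\[
S_{f, \chiup} (N) \approx \sum_{c} \frac{1}{c} \sum_{m \asymp M^{*}} \lambdaup_g (m)\, \mathfrak{C} (m, c)\, \mathcal{I} (m, c),
\]
where the dual variable $m$ runs up to a length $M^{*}$ governed by $c$, $T$, $N$ and $p^{\gamma_1}$, $\mathfrak{C} (m, c)$ is a normalised complete exponential sum to a modulus built from $c$ and a power of $p$, and $\mathcal{I} (m, c)$ a normalised oscillatory integral whose phase, of size $\asymp T$, is obtained by composing the Bessel asymptotics with $f$.

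It then remains to estimate the ingredients. The integrals $\mathcal{I} (m, c)$ are treated by stationary phase: $|\phi''| \Gt 1$ provides a unique, non-degenerate stationary point, so each integral is essentially $T^{-1/2}$ times a clean phase, the derivatives of $V$ producing the factor $1 + \varDelta / T$ (only its square root surviving the Cauchy--Schwarz step below). The complete sums $\mathfrak{C} (m, c)$ are evaluated through Gauss sums to prime-power moduli and the $p$-adic stationary-phase formalism --- this is where $(p, 2M) = 1$ is used --- after separating the generic range from the degenerate range in which $p \mid m$ or the $p$-adic phase is otherwise degenerate. In the generic range the $m$-sum is bounded either trivially or, after a further application of Voronoi summation, with genuine cancellation, and this produces the first term of \eqref{1eq: main bound}; the residual degenerate and boundary contributions are handled by Cauchy--Schwarz in $m$ together with the large-sieve/Ramanujan bound and the Rankin--Selberg estimate $\sum_{m \asymp M^{*}} |\lambdaup_g (m)|^2 \Lt (M^{*})^{1 + \vepsilon}$, producing the second term. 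The choice $\gamma_2 = \lfloor \gamma / 3 \rfloor$ is precisely the one that balances these two terms --- their crossover occurring at $N \asymp (1 + |t|) q$ --- and, together with the optimisation of the remaining parameters, it completes the proof.

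The principal obstacle lies on the $p$-adic side: the complete sums $\mathfrak{C} (m, c)$ must be evaluated exactly, and the degenerate cases $p \mid m$ must be isolated and bounded separately --- exactly the difficulty absent from the prime-modulus treatment of \cite{Fan-Sun}, and the source of the delicate interplay between the conductor-lowering parameters and the sizes of $M^{*}$ and $c$. A secondary difficulty is the two-stage stationary-phase analysis of $\mathcal{I} (m, c)$ --- Bessel asymptotics followed by the archimedean phase $f$ --- which has to be carried out with sufficient uniformity in $\varDelta$ and $T$ that neither the exponent $1/3$ nor the exponent $(\gamma - \lfloor \gamma / 3 \rfloor)/2$ is lost.
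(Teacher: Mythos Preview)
Your sketch and the paper's proof diverge at the very first move, and the later structure is also different. The paper does \emph{not} begin with a Postnikov splitting of $\chi$ into residue classes mod $p^{\gamma_1}$. Instead it applies the Bessel $\delta$-identity directly to the full sum, writing $S(N)=\sum_r \chi(r)e(f(r))V(r/N)\sum_n \lambda_g(n)\delta(r=n)$ and choosing the $\delta$-modulus $h=p^{\beta}q$ with $\beta=2\lfloor\gamma/3\rfloor$ and $q\sim Q$ an auxiliary large prime (averaged over at the end). Vorono\"i is then applied to the $n$-sum, Poisson (mod $p^{\gamma}q$) to the $r$-sum, producing character sums $\mathfrak{C}^{\alpha}_{\chi}$ and integrals $\mathcal{J}_{\beta\gamma}$. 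The decisive step---absent from your outline---is Cauchy--Schwarz in the dual Vorono\"i variable (removing $\lambda_g$ via Rankin--Selberg), followed by a \emph{second Poisson} in that variable. This manufactures a new character sum $\mathfrak{E}^{\alpha}_{\chi}(n;r_1,r_2,q_1,q_2)$ whose $p$-adic stationary-phase analysis (Lemma~\ref{lem: bound for C}) is the technical core; the Postnikov-type structure enters only here, inside the evaluation of $\mathfrak{E}^{\alpha}_{\chi}$, not as an initial decomposition. Diagonal/off-diagonal are then balanced by $\beta=2\lfloor\gamma/3\rfloor$, $K=T^{2/3}$, $Q=N^{1+\varepsilon}/\sqrt{T}$.

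Two concrete gaps in your plan. First, you never introduce the auxiliary prime $q\sim Q$ in the $\delta$-modulus, nor the average over it; in the paper this average is what makes the off-diagonal (the $q_1\neq q_2$ terms after Cauchy--Poisson) tractable, and without it the Bessel $\delta$-method here does not close. Second, your endgame ``bound the $m$-sum trivially or by a further Vorono\"i'' is not what produces the saving: a second Vorono\"i on the same $\lambda_g$-variable typically just inverts the first, and a trivial bound is far too weak. The paper's Cauchy--Schwarz $+$ Poisson is exactly the mechanism that converts the problem into the $p$-power character sum $\mathfrak{E}^{\alpha}_{\chi}$, and it is the square-root cancellation in \emph{that} sum (not in a dualised $\lambda_g$-sum) which yields the Weyl exponents.

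There is also a structural worry with your opening move: fixing $v$ and bounding each $S_v$ separately forfeits all cancellation over the $p^{\gamma_1}$ residue classes. To reach $T^{1/3}p^{\gamma_1/2}N^{1/2}$ you would then need $|S_v|\ll T^{1/3}(N/p^{\gamma_1})^{1/2}$ for a sum whose $\lambda_g$-coefficients live in an arithmetic progression mod $p^{\gamma_1}$; detecting that progression in Vorono\"i reinserts the full modulus $p^{\gamma_1}$, so the Postnikov step has not genuinely lowered the conductor. In the paper the $p^{\gamma/2}$ saving from $\chi$ comes cleanly from the Gauss sum $\tau(\chi)$ after Poisson, with the residual $p$-power sums handled afterwards---this is the route you should follow.
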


The Rankin--Selberg bound in \eqref{2eq: Ramanujan} implies that $S_{f, \chiup} (N) = O_{g} (N)$, so  \eqref{1eq: main bound} yields a non-trivial result in the range
\begin{align}
   (1+\varDelta / T)^3 p^2  N^{\vepsilon}    <   T  p^{\gamma  } < \frac {N^{3/2-\vepsilon}}   {(1+\varDelta / T)^3 p} .
\end{align}
So  in our arguments we may assume the wider but simpler range:
\begin{align}\label{1eq: range of N}
	(T+\varDelta) p^{\gamma+1} < N^{3/2} .
\end{align}

By choosing $V (x)$ to be a suitable weight function with sharp cut-offs,  the following corollary will be proven in \S \ref{sec: proof of Cor}.

\begin{cor}\label{cor}
Define
\begin{align}
	S^{\sharp}_{f, \chiup} (N) = \sum_{n\leqslant N}  \lambdaup_g(n) \chiup (n) e ( f(n) ).
\end{align}
Then for $T^5<p^{3\lfloor\gamma/3\rfloor}$, we have
	\begin{equation}\label{0eq: cor1}
	S^{\sharp}_{f, \chiup} (N) 	 \Lt_{g, \phi, \vepsilon}   T^{1/3}p^{\lp \gamma-\lfloor\gamma/3\rfloor\rp/2}N^{1/2+\vepsilon} +     \frac{p^{\lp\gamma-\lfloor\gamma/3\rfloor\rp/4}N^{3/4+\vepsilon}}{T^{1/3}}  + \frac {   N^{1+\vepsilon}} {  T^{4/9}  p^{  \lfloor \gamma /3 \rfloor /3 } } ,
	\end{equation}
	and for $T^5\geqslant p^{3\lfloor\gamma/3\rfloor}$, we have
	\begin{equation}\label{0eq: cor2}
		S^{\sharp}_{f, \chiup} (N) 	 \Lt_{g, \phi, \vepsilon}  T^{1/3}p^{\lp \gamma-\lfloor\gamma/3\rfloor\rp/2}N^{1/2+\vepsilon}    + \frac {   N^{1+\vepsilon}} {  T^{1/6}  p^{  \lfloor \gamma /3 \rfloor /2 } } .
	\end{equation}
\end{cor}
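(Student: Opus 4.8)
The plan is to deduce \eqref{0eq: cor1}--\eqref{0eq: cor2} from Theorem \ref{main-theorem2} by the standard device of replacing the sharp cutoff $\mathbbm{1}_{n\leqslant N}$ with a smooth weight carrying a short transition range, decomposing dyadically, and then optimizing the smoothing parameter. First I would dispose of small $N$: by the Rankin--Selberg bound \eqref{2eq: Ramanujan} (or just Deligne's bound $|\lambdaup_g(n)|\Lt_{\vepsilon}n^{\vepsilon}$) one has $S^{\sharp}_{f,\chiup}(N)\Lt_{g,\vepsilon}N^{1+\vepsilon}$, and this is already $\Lt T^{1/3}p^{(\gamma-\lfloor\gamma/3\rfloor)/2}N^{1/2+\vepsilon}$ whenever $N\leqslant T^{2/3}p^{\gamma-\lfloor\gamma/3\rfloor}$; so from now on $N> T^{2/3}p^{\gamma-\lfloor\gamma/3\rfloor}$, in particular $N$ is large. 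Write $A=T^{1/3}p^{(\gamma-\lfloor\gamma/3\rfloor)/2}$ and $B=N/(T^{1/6}p^{\lfloor\gamma/3\rfloor/2})$, so that \eqref{1eq: main bound} reads $(1+\varDelta/T)AN^{1/2+\vepsilon}+(1+\varDelta/T)^{1/2}BN^{\vepsilon}$.

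Next, for a parameter $\varDelta\geqslant 1$ to be chosen, I would fix $W\in C^{\infty}(\BR)$ with $0\leqslant W\leqslant 1$, $W\equiv 1$ on $(-\infty,1]$, $\mathrm{supp}\,W\subseteq(-\infty,1+1/\varDelta]$ and $W^{(j)}\Lt_{j}\varDelta^{j}$. From $\mathbbm{1}_{n\leqslant N}=W(n/N)-W(n/N)\mathbbm{1}_{n>N}$ together with the trivial bound $\sum_{N<n\leqslant N(1+1/\varDelta)}|\lambdaup_g(n)|\Lt_{\vepsilon}N^{1+\vepsilon}/\varDelta$ for the tail (Deligne), I get
\[
S^{\sharp}_{f,\chiup}(N)=\sum_{n}\lambdaup_g(n)\chiup(n)e(f(n))W(n/N)+O_{g,\vepsilon}\!\left(N^{1+\vepsilon}/\varDelta\right).
\]
I would then split $W(x/N)$ by a smooth dyadic partition into $O(\log N)$ weights supported in intervals $[N_0,2N_0]$, $N_0$ dyadic and $\leqslant N$: for $N_0\leqslant N/4$ the weight is a fixed bump (total variation $\Lt 1$, derivatives $\Lt 1$), and for the one or two top scales $N_0\asymp N$ it has total variation $\Lt 1$ and $j$th derivative $\Lt_{j}(\varDelta/N_0)^{j}$. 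Here the shape $\phi(x)=-\log x/2\pi$ (the only case needed in the sequel) is what makes the decomposition clean: one has $f(N_0x)=T\phi(x)+\mathrm{const}$ for every dyadic $N_0$, so each piece is, up to a unimodular constant, a sum of exactly the form treated in Theorem \ref{main-theorem2} at scale $N_0$, with the same $\phi$ and with smoothing parameter $\asymp 1$ (respectively $\asymp\varDelta$). Applying \eqref{1eq: main bound} to each piece and summing the geometric series in $N_0$ (which is dominated by $N_0\asymp N$) yields
\[
S^{\sharp}_{f,\chiup}(N)\Lt_{g,\vepsilon}(1+\varDelta/T)\,A\,N^{1/2+\vepsilon}+(1+\varDelta/T)^{1/2}B\,N^{\vepsilon}+\frac{N^{1+\vepsilon}}{\varDelta}.
\]

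It remains to optimize over $\varDelta$. When $T^{5}\geqslant p^{3\lfloor\gamma/3\rfloor}$ --- equivalently $T\geqslant T^{1/6}p^{\lfloor\gamma/3\rfloor/2}$ --- I would take $\varDelta=T$: then $1+\varDelta/T\asymp 1$ and the last term is $N^{1+\vepsilon}/T\leqslant BN^{\vepsilon}$, giving \eqref{0eq: cor2}. When $T^{5}<p^{3\lfloor\gamma/3\rfloor}$ the optimal $\varDelta$ exceeds $T$, so $1+\varDelta/T\asymp\varDelta/T$, and I would take
\[
\varDelta=\max\!\Bigl(T,\ \min\!\bigl(T^{4/9}p^{\lfloor\gamma/3\rfloor/3},\ N^{1/4}T^{1/2}A^{-1/2}\bigr)\Bigr),
\]
which, according to the relative sizes of $N$, $T$ and $p^{\gamma}$, balances $N^{1+\vepsilon}/\varDelta$ against $(\varDelta/T)AN^{1/2+\vepsilon}$ (the balance point $N^{1/4}T^{1/2}A^{-1/2}$ then producing $A^{1/2}N^{3/4+\vepsilon}/T^{1/2}$) or against $(\varDelta/T)^{1/2}BN^{\vepsilon}$ (the balance point $T^{4/9}p^{\lfloor\gamma/3\rfloor/3}$ then producing $N^{1+\vepsilon}/(T^{4/9}p^{\lfloor\gamma/3\rfloor/3})$); together with the bare term $AN^{1/2+\vepsilon}$ coming from the ``$1\cdot$'' part of $(1+\varDelta/T)AN^{1/2+\vepsilon}$, this is exactly \eqref{0eq: cor1}.

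The only delicate point is this last verification. Each single choice of $\varDelta$ is immediate, but to confirm that the clean bound above is majorized by the stated two- or three-term expressions for all admissible $N,T,p^{\gamma}$ one must split into a handful of sub-cases --- around the thresholds $N\asymp T^{2/3}p^{\gamma-\lfloor\gamma/3\rfloor}$, $N\asymp T^{8/3}p^{\gamma-\lfloor\gamma/3\rfloor}$, and $T^{4/9}p^{\lfloor\gamma/3\rfloor/3}\asymp N^{1/4}T^{1/2}A^{-1/2}$ --- and check a short list of elementary monomial inequalities; this is routine but fiddly. A secondary technical point is the legitimacy of the dyadic rescaling of the phase, which is precisely where the $\log$-shape of $\phi$ (turning the dilation $x\mapsto x/N_0$ into an additive constant invisible to $e(\cdot)$) is used; for a $\phi$ merely defined and nondegenerate on $(1/2,5/2)$ one would instead note that $\sum_{n\leqslant N}$ is then supported on $N/2<n\leqslant N$ and handle it at a single scale $\asymp N$.
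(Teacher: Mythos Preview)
Your proposal is correct and follows essentially the same approach as the paper: smooth the sharp cutoff with transition length $N/\varDelta$, pick up the error $O(N^{1+\vepsilon}/\varDelta)$ via Deligne, apply Theorem~\ref{main-theorem2}, and optimize $\varDelta$. The only differences are presentational: the paper works directly with a single dyadic block $S^{\flat}(N)=\sum_{N\leqslant n\leqslant 2N}$ (so no dyadic decomposition or phase-rescaling discussion is needed), and it chooses $\varDelta$ by an explicit three-way case split in $N$ (at the thresholds $N\asymp T^{8/3}p^{2\gamma/3}$ and $N\asymp T^{4/9}p^{10\gamma/9}$ when $\gamma\equiv 0\ (\mathrm{mod}\ 3)$), which unwinds to exactly your $\varDelta=\max\bigl(T,\min(T^{4/9}p^{\lfloor\gamma/3\rfloor/3},\,N^{1/4}T^{1/2}A^{-1/2})\bigr)$.
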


Our idea is to incorporate the Bessel $\delta$-method \cite{AHLQ-Bessel-delta} into the approach of  \cite{MS-2019}. The analysis in \cite{AHLQ-Bessel-delta} is simple enough to enable us to obtain a strong Weyl-type bound in the $t$-aspect, and to even go beyond the so-called Weyl barrier   \cite{HMQ}\footnote{A similar result  in the $q$-aspect ($q = p^{\gamma}$) is also obtained in \cite{GM} by applying  the $p$-adic  van der Corput method of Mili\'cevi\'c \cite{Mil-Sub-Weyl}.} (there is an application to the sub-Weyl subconvexity problem, though not enough for a solution). Also, we take the chance to give a corrected version of \cite[Lemma 5.2]{MS-2019} (see \S \ref{sec: char sum E(n)}).

Finally, we comment that, in view of the work \cite{Fan-Sun}, our results are also valid for Maass cusp forms. 

\subsection{Comparison with the results of Blomer and Mili\'cevi\'c} Let $g$ be a holomorphic or Maass cusp form of full level.
It is proven by Blomer and Mili\'cevi\'c \cite[Theorem 1]{Blomer-Mil-p-adic} that
\begin{align}\label{0eq: Bl-Mi's bound, 1}
	\sum_{n=1}^\infty \lambdaup_g(n) \chiup (n) W \lp \frac {n} {N} \rp \Lt_{g,   \vepsilon} Z^{5/2}    p^{7/6+\gamma/3+\vepsilon \gamma} N^{1/2},
\end{align}
for $W (x) \in C_c^{\infty} [1,2]$ with $ W^{(j)} (x) \Lt Z^j $. Now, in the setting of Theorem \ref{main-theorem2}, on choosing $W (x) = e (T\phi (x)) V (x) $ and $ Z = T + \varDelta $, the bound in \eqref{0eq: Bl-Mi's bound, 1} turns into
\begin{align}\label{0eq: Bl-Mi's bound, 2}
	S_{f, \chiup} (N) \Lt_{g, \phi, \vepsilon} (1+\varDelta/T)^{5/2} T^{5/2} p^{7/6+\gamma/3+\vepsilon \gamma} N^{1/2}.
\end{align}
Note that \eqref{1eq: main bound} is better than \eqref{0eq: Bl-Mi's bound, 2} for $ N < (T+\varDelta)^{4-\vepsilon} T^{4/3} p^{5/3 + \gamma  } $.

Another result of Blomer and Mili\'cevi\'c in \cite[(1.5)]{Blomer-Mil-p-adic} reads:
\begin{align}\label{0eq: sharp char sum, BM}
	\sum_{n\leqslant N}  \lambdaup_g(n) \chiup (n) \Lt_{g,  \vepsilon} p^{1/3 + 2\gamma/21 + \vepsilon \gamma } N^{6/7 } .
\end{align}
In Theorem \ref{main-theorem2}, by letting $T = 1$ and absorbing $ e(\phi (x))$ into $V(x)$, the proof of Corollary \ref{cor} yields the following bound:
\begin{align}\label{0eq: sharp char sum}
	 \sum_{n\leqslant N}  \lambdaup_g(n) \chiup (n) \Lt_{g,  \vepsilon}       {p^{\lp\gamma-\lfloor\gamma/3\rfloor\rp/4}N^{3/4+\vepsilon}}   + \frac {   N^{1+\vepsilon}} {    p^{  \lfloor \gamma /3 \rfloor /3 } }.
\end{align}
Note that \eqref{0eq: sharp char sum, BM} beats the trivial bound if $N > p^{7/3 + 2\gamma/3 + \vepsilon \gamma}$, and so does \eqref{0eq: sharp char sum} in a slightly larger range  $ N > p^{2/3 + 2\gamma/3 + \vepsilon \gamma}$ (up to the  factor $N^{\vepsilon}$).
Moreover, in this range \eqref{0eq: sharp char sum} is better than \eqref{0eq: sharp char sum, BM} for $   N < p^{7/9 + 13\gamma/9   } $.

We remark that the Jutila  method has its advantages over the Bessel $\delta$-method when $N$ is  large. This is due to the nature and the limitation of the Bessel $\delta$-method or any $\delta$- or circle method---it is less beneficial to separate the oscillations if either their measure is decreasing or the length of summation is increasing.

\subsection*{Notation} Subsequently, unless otherwise specified, we shall reserve the letter $q$ for  primes, instead of the modulus of the  character $\chiup$. 
The notation $y \sim Y$ stands for $y \in [Y, 2Y]$ (according to the context, $y$ can be integers, primes, or real numbers).

Let $\vepsilon$ or $A$ be an arbitrarily small or large real number, respectively, whose value may differ
from one occurrence to another.


	\section{Preliminaries}
	
	Let $S^{\star}_k (M, \xiup)$ denote the set of (Hecke-normalized) holomorphic newforms of level $M$, weight $k$ and nebentypus $\xiup$. We have necessarily $\xiup (-1) = (-1)^k$.
	Let  $g \in S^{\star}_k (M, \xiup)$.  For its Fourier coefficients $\lambdaup_g (n)$, we have the Rankin--Selberg bound
	\begin{align}\label{2eq: Ramanujan}
		\sum_{n \shskip \leqslant N} |\lambdaup_g (n)|^2 \Lt_{g} N.
	\end{align}
	Moreover,    the  Ramanujan conjecture for holomorphic cusp forms is well-known (\cite{Deligne,Deligne-Serre}):
	\begin{align}\label{2eq: Ramanujan, 0}
		\lambdaup_g (n) \Lt n^{\vepsilon }.
	\end{align}

	
	\subsection{The Vorono\"i summation} 	The following \Voronoi summation formula (\cite[Lemma 2.1]{AHLQ-Bessel-delta}) is a special case of \cite[Theorem A.4]{KMV}. 
	
	\begin{lem}[The \Voronoi Summation Formula]\label{lem: Voronoi}
		Let $g$ be a   holomorphic newform in $S^{\star}_k (M, \xiup)$. Let $a, \widebar{a}, c$ be integers such that $c \geqslant 1$, $(a, c) = 1$, $a \widebar{a} \equiv 1 (\mod c)$ and $(c, M) = 1$. Let $F (x) \in C_c^{\infty} (0, \infty)$. Then there exists a complex number $\etaup_g $ of modulus $1$ {\rm(}the Atkin--Lehner pseudo-eigenvalue of $g${\rm)} such that
		\begin{equation}\label{2eq: Voronoi}
			\begin{split}
				\sum_{n=1}^{\infty}  \lambdaup_g(n)  e\left(\frac{an}{c}\right) F\left( {n} \right) =  \frac {\etaup_g \xiup (-c)} {c \sqrt{M}} \sum_{n=1}^{\infty} \overline { \lambdaup_{  g } (n) } e\left(- \frac{\widebar{a}n}{c}\right) \check{F} \lp \frac{  n }{c^2 M} \rp ,
			\end{split}
		\end{equation}
		where $\check{F} (y)$ is the Hankel transform of $F (x)$ defined by
		\begin{align}\label{2eq: Hankel transform}
			\check{F} (y) = 2\pi i^k\int_0^\infty F(x) J_{k-1}\left( {4\pi\sqrt{xy}} \right) \hskip -1pt \mathrm{d}x,
		\end{align}
	and $J_{k-1} (x)$ is the Bessel function of the first kind.
	\end{lem}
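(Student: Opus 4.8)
As remarked just above the statement, this lemma is the holomorphic specialization of \cite[Theorem A.4]{KMV}; the plan is to reprove it by the classical route, through the functional equation of the additively twisted $L$-function together with Mellin inversion. \emph{First} I would set up the twisted Dirichlet series: for $(a,c)=1$ put $L_{a/c}(s)=\sum_{n=1}^{\infty}\lambdaup_g(n)e(an/c)n^{-s}$, absolutely convergent for $\Re(s)>1$ by the Rankin--Selberg bound \eqref{2eq: Ramanujan} (or by Deligne, \eqref{2eq: Ramanujan, 0}). Writing the classical coefficients $a_g(n)=\lambdaup_g(n)n^{(k-1)/2}$, the Mellin transform of $g$ along the vertical line through $a/c$ gives, for $\Re(s)$ large,
$$\int_0^{\infty} g\lp \tfrac{a}{c}+iy\rp y^{s-1}\,\mathrm{d}y \;=\; \frac{\Gamma(s)}{(2\pi)^s}\, L_{a/c}\!\lp s-\tfrac{k-1}{2}\rp .$$

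\emph{Next} I would establish the analytic continuation and functional equation of $L_{a/c}$, which is where the hypotheses $(c,M)=1$ and ``$g$ a newform'' enter. Because $(c,M)=1$ (and $(a,c)=1$), the cusp $a/c$ is $\Gamma_0(M)$-equivalent to $0$: choosing $\delta,\zeta\in\BZ$ with $-c\delta-aM\zeta=1$, the matrix $\lp\begin{smallmatrix}-c&a\\ M\zeta&\delta\end{smallmatrix}\rp\in\Gamma_0(M)$ carries $a/c$ to $0$. Composing this with the Fricke involution $W_M=\lp\begin{smallmatrix}0&-1\\ M&0\end{smallmatrix}\rp$, which on a newform acts (in the weight-$k$ slash action) by $g|_kW_M=\etaup_g\,\widebar{g}$ with $\widebar{g}(z)=\sum_n\overline{a_g(n)}e(nz)\in S^{\star}_k(M,\overline{\xiup})$ and $|\etaup_g|=1$, yields a transformation formula of the shape
$$g\lp\tfrac{a}{c}+iy\rp \;=\; \frac{\etaup_g\,\xiup(-c)}{i^{k}}\,\frac{1}{(c\sqrt{M}\,y)^{k}}\;\widebar{g}\!\lp -\tfrac{\widebar{a}}{c}+\tfrac{i}{c^{2}My}\rp,\qquad y>0,$$
where $a\widebar{a}\equiv1\ (\mathrm{mod}\ c)$. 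Substituting this into the integral above, splitting $\int_0^{\infty}=\int_0^{1/(c\sqrt M)}+\int_{1/(c\sqrt M)}^{\infty}$ and changing variables $y\mapsto 1/(c^{2}My)$ in the first piece, writes the completed integral as a sum of two integrals over $[1/(c\sqrt M),\infty)$ — one of $g$, one of $\widebar{g}$ — each convergent for all $s\in\BC$ by the exponential decay of cusp forms toward $i\infty$. This gives the entire continuation, and the symmetry under $y\mapsto1/(c^{2}My)$ produces
$$\Lambda_{a/c}(s):=\lp\tfrac{c\sqrt M}{2\pi}\rp^{\!s}\Gamma\!\lp s+\tfrac{k-1}{2}\rp L_{a/c}(s) \;=\; \etaup_g\,\xiup(-c)\,i^{k}\;\Lambda^{\widebar{g}}_{-\widebar{a}/c}(1-s),$$
with $\Lambda^{\widebar{g}}$ the analogous completed series built from $\widebar{g}$.

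\emph{Then} I would run Mellin inversion. Since $F\in C_c^{\infty}(0,\infty)$, its Mellin transform $\widehat F(s)=\int_0^{\infty}F(x)x^{s-1}\mathrm{d}x$ is entire and of rapid decay on vertical lines; hence for $\sigma>1$ one has $\sum_{n}\lambdaup_g(n)e(an/c)F(n)=\tfrac{1}{2\pi i}\int_{(\sigma)}\widehat F(s)L_{a/c}(s)\,\mathrm{d}s$. As $g$ is cuspidal, $L_{a/c}$ is entire, so I move the line to $\Re(s)=-\sigma$, apply the functional equation, substitute $s\mapsto1-s$, and — the new line now having $\Re(s)>1$ — expand $\Lambda^{\widebar{g}}_{-\widebar{a}/c}$ into its Dirichlet series and integrate term by term (justified by Stirling for the gamma quotient and by the decay of $\widehat F$). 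This gives
$$\frac{\etaup_g\,\xiup(-c)\,i^{k}}{c\sqrt M}\sum_{n=1}^{\infty}\overline{\lambdaup_g(n)}\,e(-\widebar{a}n/c)\cdot\frac{1}{2\pi i}\int_{(\sigma')}\widehat F(1-s)\,\frac{\Gamma\!\lp s+\tfrac{k-1}{2}\rp}{\Gamma\!\lp\tfrac{k+1}{2}-s\rp}\lp\frac{4\pi^{2}n}{c^{2}M}\rp^{\!-s}\mathrm{d}s,$$
and the Mellin--Barnes identity $\int_0^{\infty}J_{k-1}(4\pi\sqrt{xy})x^{s-1}\mathrm{d}x=(4\pi^{2}y)^{-s}\,\Gamma(s+\tfrac{k-1}{2})/\Gamma(\tfrac{k+1}{2}-s)$, combined with Mellin inversion, identifies the inner integral with $(2\pi i^{k})^{-1}\check F(n/(c^{2}M))$ for $\check F$ as in \eqref{2eq: Hankel transform}; the two factors $i^{k}$ then cancel and \eqref{2eq: Voronoi} follows.

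\emph{The main obstacle.} The analytic steps — absolute convergence, the contour shift, and the interchange of sum and integral — are routine once the super-polynomial decay of $\widehat F$ and Stirling's estimates are recorded; the genuinely delicate point is pinning down the root number $\etaup_g\,\xiup(-c)$ and the exact placement of $i^{k}$, which demands careful bookkeeping of the normalization of the Fricke involution, of the cusp-equivalence matrix, and of the convention $a\widebar{a}\equiv1\ (\mathrm{mod}\ c)$. Since none of this bookkeeping is illuminating here, the efficient course — and the one I would actually take — is to invoke \cite[Theorem A.4]{KMV} directly and simply specialize the archimedean kernel to the Bessel function $J_{k-1}$, exactly as in \cite[Lemma 2.1]{AHLQ-Bessel-delta}.
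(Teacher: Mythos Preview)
Your proposal is correct, and in fact goes well beyond what the paper does: the paper gives no proof at all, simply citing the lemma as a special case of \cite[Theorem~A.4]{KMV} (and noting it already appears as \cite[Lemma~2.1]{AHLQ-Bessel-delta}). Your final sentence --- invoke \cite[Theorem~A.4]{KMV} and specialize the archimedean kernel to $J_{k-1}$ --- is exactly the paper's approach; the preceding sketch via the functional equation of $L_{a/c}(s)$ and Mellin inversion is a correct reconstruction of the classical argument behind that theorem, but is not needed here.
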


The \Voronoi summation formula in \cite[Theorem A.4]{KMV} is more general, where it is only required that $((c, M), M/(c,M) ) = 1$.

For the Farey dissection in the method of Jutila  \cite{Jut87} or its $p$-adic analogue in Blomer--Mili\'cevi\'c \cite{Blomer-Mil-p-adic},  the fraction $a/c$ has to be arbitrary, so the \Voronoi in \cite{KMV} works only if $M$ is square-free; thus in  \cite{BMN}, they need a more general \Voronoi even without the restriction $((c, M), M/(c,M) ) = 1$.

For the circle method of Kloosterman used in Munshi--Singh \cite{MS-2019} (with conductor lowering), one needs  $c = p^{\valpha} q$ for any $q \leqslant Q$, and hence the square-free condition  on  $M$ is imposed  there.

In comparison, the Bessel $\delta$-method is very flexible on the modulus (the reader may compare it with the Jutila $\delta$-method \cite{Jutila-1,Jutila-2,Jutila-3} used in \cite{Munshi-Circle-I})---we may choose $q \sim Q$ to be prime so that $(M p, q) = 1$ and in our application $c = p^{\valpha} q$ or $p^{\valpha} $ for $\valpha \leqslant 2 \lfloor\gamma/3\rfloor $. Therefore,   Lemma \ref{lem: Voronoi} may be applied under the mild assumption $(p, M) = 1$ (as in Theorem \ref{main-theorem-Weyl}, \ref{main-theorem2}). It is not hard to see that  this technical assumption is removable, as one may resort to  {\rm\cite[Theorem A.{\rm4}]{KMV}} for the case $\valpha \geqslant \mathrm{ord}_p (M) $ and to {\rm\cite[Lemma {\rm2.4}]{BMN}}  for the remaining case $0 < \valpha < \mathrm{ord}_p (M)$.

\subsection{A stationary phase lemma}

Among the three stationary phase lemmas in Appendix A of \cite{AHLQ-Bessel-delta}, only   Lemma A.1 (recorded below) will be used in our paper. However,  we need  their  results   (Lemma 5.1,  5.5) for which Lemmas A.2 and A.3 (the $1$- and $2$-dimensional second derivative tests\footnote{Note that one only needs the weight functions to have bounded variation in the second derivative tests.}) are required. 

\begin{lem}\label{lem: staionary phase}
	Let $\tw \in C_c^{\infty} [a, b]$ and $f \in C^{\infty} [a, b]$ be  real-valued. Suppose that there
	are   parameters $P, U,   Y, Z,  R > 0$ such that
	\begin{align*}
		f^{(i)} (x) \Lt_{ \, i } Y / P^{i}, \hskip 10pt \tw^{(j)} (x) \Lt_{ \, j } Z / U^{j},
	\end{align*}
	for  $i \geqslant 2$ and $j \geqslant 0$, and
	\begin{align*}
		| f' (x) | \Gt R.
	\end{align*}
	Then for any $A \geqslant 0$ we have
	\begin{align*}
		\int_a^b e (f(x)) \tw (x) \nd x \Lt_{ A} (b - a) Z \bigg( \frac {Y} {R^2P^2} + \frac 1 {RP} + \frac 1 {RU} \bigg)^A .
	\end{align*}
\end{lem}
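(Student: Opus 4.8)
The statement to prove is the stationary phase Lemma~\ref{lem: staionary phase}, which is a standard non-stationary-phase estimate: when $f'$ has no zero on $[a,b]$, repeated integration by parts forces the integral to be negligible. Let me sketch the plan.

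---

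\textbf{The plan} is to prove this by iterated integration by parts using the differential operator $D(\cdot) = \frac{1}{2\pi i f'(x)} \frac{d}{dx}(\cdot)$, which fixes the exponential $e(f(x))$ in the sense that $D(e(f(x))) = e(f(x))$.

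First I would set $g_0(x) = \tw(x)$ and define inductively $g_{m+1}(x) = -\frac{1}{2\pi i}\frac{d}{dx}\bigl(g_m(x)/f'(x)\bigr)$; integrating by parts $A$ times (the boundary terms all vanish because $\tw$ is compactly supported in the open interval) gives
\begin{align*}
\int_a^b e(f(x)) \tw(x)\,\nd x = \int_a^b e(f(x)) g_A(x)\,\nd x.
\end{align*}
The main work is then to estimate $g_A(x)$ pointwise. Each application of the operator produces, via the product and quotient rules, a sum of terms in which one differentiates either $\tw$, or one of the factors $1/f'$, or one of the higher derivatives $f^{(i)}$ ($i\geqslant 2$) that have already appeared. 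I would prove by induction on $m$ that
\begin{align*}
g_m(x) \Lt_m \frac{Z}{R^m} \sum \prod \frac{|f^{(i_1)}(x)|\cdots}{\cdots}
\end{align*}
— more precisely, that $g_m$ is a sum of $O_m(1)$ terms, each of the shape $c(x)\big/\bigl(f'(x)^{m+r}\bigr)$ where $c(x)$ is a product of one derivative $\tw^{(j)}$ and $r$ derivatives $f^{(i_\nu)}$ with each $i_\nu\geqslant 2$ and $\sum_\nu (i_\nu - 1) + j = m$.

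\textbf{The key estimate.} Plugging in the hypotheses $f^{(i)}\Lt Y/P^i$, $\tw^{(j)}\Lt Z/U^j$, and $|f'|\gg R$, a term as above is bounded by
\begin{align*}
\frac{Z}{U^j}\cdot \frac{1}{R^{m+r}} \cdot \prod_{\nu=1}^{r}\frac{Y}{P^{i_\nu}} = \frac{Z}{R^m}\cdot\frac{1}{U^j}\cdot\Bigl(\frac{Y}{R}\Bigr)^{r}\cdot\frac{1}{P^{\sum i_\nu}}.
\end{align*}
Here I would observe two things. Since each $i_\nu\geqslant 2$ we have $P^{\sum i_\nu} = P^{\sum(i_\nu-1)}\cdot P^r$, and $\sum(i_\nu-1) + j = m$, so the $P$-power in the denominator is $P^{m-j}\cdot P^r$. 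Combining, each term is
\begin{align*}
\frac{Z}{U^j}\cdot\frac{1}{R^m}\cdot\frac{Y^r}{R^r P^r}\cdot\frac{1}{P^{m-j}} = Z\Bigl(\frac{Y}{R^2P^2}\Bigr)^{r}\cdot\Bigl(\frac{1}{RP}\Bigr)^{m-j-r}\cdot\Bigl(\frac{1}{RU}\Bigr)^{j},
\end{align*}
using $m = j + \sum(i_\nu-1) \geqslant j + r$ so the middle exponent $m-j-r\geqslant 0$. Since $r$, $m-j-r$, and $j$ are nonnegative and sum to $m-\bigl(\sum(i_\nu-1)-r\bigr)$... actually they sum to exactly $r + (m-j-r) + j = m$, every term is bounded by $Z\cdot\max\bigl(Y/R^2P^2,\ 1/RP,\ 1/RU\bigr)^{m}$, hence by $Z\bigl(Y/R^2P^2 + 1/RP + 1/RU\bigr)^m$. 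Taking $m = A$ and integrating over $[a,b]$ (length $b-a$, and $|e(f(x))| = 1$) yields the claim. The only mild subtlety is bookkeeping the combinatorics of the Leibniz expansion to confirm the exponent constraint $\sum_\nu(i_\nu-1) + j = m$ is preserved at each step — differentiating $1/f'$ raises $r$ by $1$ and introduces one factor $f''$ contributing $i-1 = 1$; differentiating an existing $f^{(i)}$ replaces it by $f^{(i+1)}$, raising its contribution by $1$; differentiating $\tw^{(j)}$ raises $j$ by $1$ — in every case the left side and $m$ both increase by exactly $1$. That is the heart of the matter, and it is entirely routine.

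\textbf{Expected main obstacle.} There is no real obstacle: this is a textbook non-stationary-phase lemma and the proof is a clean induction. The only thing requiring a little care is tracking the exponents so that the three-term bound $\bigl(Y/R^2P^2 + 1/RP + 1/RU\bigr)^A$ comes out exactly as stated rather than with some lossy exponent; the inequality $m \geqslant j + r$ (equivalently $\sum(i_\nu - 1)\geqslant r$, which holds since each summand is $\geqslant 1$) is what makes the bookkeeping close.
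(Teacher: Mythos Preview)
Your argument is correct and is exactly the standard proof by repeated integration by parts; the induction on the shape of $g_m$ and the exponent bookkeeping (in particular the observation $\sum_\nu(i_\nu-1)\geqslant r$ ensuring $m-j-r\geqslant 0$) are carried out accurately. Note that the paper itself does not prove this lemma at all --- it is simply quoted verbatim as Lemma~A.1 of \cite{AHLQ-Bessel-delta} --- so there is no ``paper's proof'' to compare against, but your argument is precisely the one underlying that reference (and the earlier Blomer--Khan--Young version on which it is modeled).
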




 \subsection{The Bessel $\delta$-method}  Fix a (non-negative valued) bump function $U \in C_c^{\infty} (0, \infty) $ with support in $[1,2]$. For $a, b > 0$ and $X > 1$, consider the Bessel integral
 \begin{align}\label{2eq: defn I(a,b;X)}
 	I_{k} (a, b; X) =  \int_0^\infty U\left( {x}/{X}\right) e ( {2  a \sqrt{  x}}  ) J_{k-1}  ( {4\pi b \sqrt{ x}}  ) \mathrm{d}x.
 \end{align}
Subsequently, the implied constants will always depend on $k$ and $U$, and we shall suppress $k$, $U$ from the subscripts of $O$, $\Lt$ or $\Gt$ for simplicity.

Let $ \widetilde{U} $ denote the Mellin transform of $U$. For $a^2X > 1$, \cite[Lemma 3.1]{AHLQ-Bessel-delta} states that
	 \begin{equation}\label{2eq: asymptotic of I(a,a)}
	 		I_k (a,a;X) = \frac {(1+i)i^{k-1} \widetilde{U}(3/4) X } {4 \pi  ( a^2 X)^{1/4} } + O \left( \frac {X} {(a^2  X)^{3/4 } } \right).
	 \end{equation}
For our problem, the error term in \eqref{2eq: asymptotic of I(a,a)} is undesirable, so we shall replace it by the following lemma.
\begin{lem}\label{lem: I(a,a)}
	 We may write
	\begin{align}\label{2eq: I(a, a) = I (x)}
		I_k (a,a;X) = \frac {i^{k}X} {2\pi  } I_{k}  \big(  {a \sqrt{X}} \big),
	\end{align}
	so that $I_k (x) \in C^{\infty} (0, \infty)$ has bounds
	\begin{align}\label{2eq: bounds for 1/I(x)}
		x^j	\frac {\mathrm{d}^j } {\mathrm{d} x^j}   \frac 1 {I_k (x)} \Lt_j   {\sqrt{x}},
	\end{align}
for any   $  x \Gt  1$. 
\end{lem}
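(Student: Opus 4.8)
The plan is to realize $I_k(a,a;X)$ explicitly as $X$ times a function of the single variable $a\sqrt{X}$, and then to control that function and its reciprocal by stationary phase.

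First I would substitute $x\mapsto Xu$ in \eqref{2eq: defn I(a,b;X)} with $b=a$, which gives
\[
  I_k(a,a;X)=X\int_0^\infty U(u)\,e\big(2(a\sqrt{X})\sqrt{u}\big)\,J_{k-1}\big(4\pi(a\sqrt{X})\sqrt{u}\big)\,\nd u ,
\]
so that the integral depends on $(a,X)$ only through the outer factor $X$ and through $a\sqrt{X}$. This forces the definition
\[
  I_k(x):=\frac{2\pi}{i^{k}}\int_0^\infty U(u)\,e\big(2x\sqrt{u}\big)\,J_{k-1}\big(4\pi x\sqrt{u}\big)\,\nd u ,
\]
for which \eqref{2eq: I(a, a) = I (x)} is immediate; since $U\in C_c^\infty(0,\infty)$, differentiating under the integral shows $I_k\in C^\infty(0,\infty)$. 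It then remains to prove \eqref{2eq: bounds for 1/I(x)}.

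Next I would feed in the classical large-argument expansion of the Bessel function in the form $J_{k-1}(z)=z^{-1/2}\big(e^{iz}W_+(z)+e^{-iz}W_-(z)\big)$, valid for $z\geqslant 1$, where each $W_\pm\in C^\infty[1,\infty)$ admits a termwise differentiable asymptotic expansion in powers of $1/z$ whose leading (constant) term $c_0^{\pm}$ is nonzero; in particular $W_\pm(z)=c_0^{\pm}+O(1/z)$ and $W_\pm^{(j)}(z)\Lt_j z^{-j}$. Using $e(2x\sqrt{u})\,e^{-4\pi i x\sqrt{u}}=1$ this gives, for $x\Gt 1$,
\[
  \sqrt{x}\,I_k(x)=\frac{\sqrt{\pi}}{i^{k}}\bigg(\int_0^\infty\frac{U(u)}{u^{1/4}}\,W_-(4\pi x\sqrt{u})\,\nd u+\int_0^\infty\frac{U(u)}{u^{1/4}}\,e(4x\sqrt{u})\,W_+(4\pi x\sqrt{u})\,\nd u\bigg).
\]
The second, oscillatory integral has phase $4x\sqrt{u}$ with derivative $\asymp x$ on $[1,2]$ and a smooth amplitude whose $u$-derivatives are all $\Lt_j 1$ uniformly for $x\Gt 1$, so Lemma \ref{lem: staionary phase} makes it---and, after differentiating under the integral, each of its $x$-derivatives---equal to $O_A(x^{-A})$ for every $A$. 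For the first integral, $\tfrac{\nd^{j}}{\nd x^{j}}W_-(4\pi x\sqrt{u})=(4\pi\sqrt{u})^{j}W_-^{(j)}(4\pi x\sqrt{u})\Lt_j x^{-j}$, so its $j$-th derivative is $\Lt_j x^{-j}$ for $j\geqslant 1$, while the integral itself equals $c_0^{-}\widetilde{U}(3/4)+O(1/x)$; here $\widetilde{U}(3/4)=\int_0^\infty U(u)u^{-1/4}\,\nd u>0$ since $U\geqslant 0$ is not identically zero, so the limiting constant $C:=\sqrt{\pi}\,c_0^{-}\widetilde{U}(3/4)/i^{k}$ is nonzero---indeed it is precisely the leading coefficient already visible in \eqref{2eq: asymptotic of I(a,a)}. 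Altogether
\[
  \sqrt{x}\,I_k(x)=C+\psi(x),\qquad \psi(x)=O(1/x),\qquad x^{j}\psi^{(j)}(x)\Lt_j 1\quad (x\Gt 1).
\]

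Then, choosing the constant implicit in ``$x\Gt 1$'' large enough that $|\psi(x)|\leqslant|C|/2$ (so in particular $I_k(x)\neq 0$ there and $1/I_k(x)$ is well defined), I would expand
\[
  \frac{1}{\sqrt{x}\,I_k(x)}=\frac{1}{C}\sum_{m\geqslant 0}\Big(-\frac{\psi(x)}{C}\Big)^{m}.
\]
The bounds $x^{j}\psi^{(j)}(x)\Lt_j 1$ propagate, by the Leibniz rule and induction on $j$, through the powers of $\psi$ and the uniformly convergent series, so that $x^{j}\tfrac{\nd^{j}}{\nd x^{j}}\big(1/(\sqrt{x}\,I_k(x))\big)\Lt_j 1$. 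Finally, writing $1/I_k(x)=\sqrt{x}\cdot\big(1/(\sqrt{x}\,I_k(x))\big)$ and using $x^{i}\tfrac{\nd^{i}}{\nd x^{i}}\sqrt{x}\asymp\sqrt{x}$, one more application of the Leibniz rule yields $x^{j}\tfrac{\nd^{j}}{\nd x^{j}}\big(1/I_k(x)\big)\Lt_j\sqrt{x}$, which is \eqref{2eq: bounds for 1/I(x)}. The hard part will be the middle step: squeezing out of the Bessel integral not merely the leading term of \eqref{2eq: asymptotic of I(a,a)} but a full, differentiable asymptotic for $I_k(x)$, which forces us to carry the differentiated Bessel expansion and to kill the oscillatory part uniformly in both $x$ and the order of differentiation via Lemma \ref{lem: staionary phase}. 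Once $I_k(x)$ is exhibited as $x^{-1/2}$ times a slowly varying, non-vanishing function, the inversion in the last step is routine.
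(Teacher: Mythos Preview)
Your argument is correct, but it follows a genuinely different route from the paper's. The paper does not work with the Bessel asymptotic expansion at all; instead it quotes the Mellin--Barnes representation of $I_k(a,a;X)$ from \cite[\S 3.2]{AHLQ-Bessel-delta}, reads off \eqref{2eq: I(a, a) = I (x)} from that, and then obtains the asymptotics of $I_k(x)$ and of $x^j I_k^{(j)}(x)$ by shifting the contour to $\Re(s)=0$ and collecting the residues at $s=3/4$ and $s=1/4$. This yields directly $I_k(x)=\widetilde{U}(3/4)/((1+i)\sqrt{x})+O(x^{-3/2})$ and the analogous formulae for derivatives, from which the bounds on $1/I_k(x)$ follow.

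Your approach, by contrast, substitutes $x\mapsto Xu$ to see the dependence on $a\sqrt{X}$, then inserts the large-argument expansion $J_{k-1}(z)=z^{-1/2}(e^{iz}W_+(z)+e^{-iz}W_-(z))$ and observes that the phase $e(2x\sqrt{u})$ exactly cancels the $e^{-iz}$ branch, leaving a non-oscillatory main term and an oscillatory remainder with phase $e(4x\sqrt{u})$ that Lemma~\ref{lem: staionary phase} annihilates. This is more self-contained---it avoids importing the Mellin--Barnes identity from \cite{AHLQ-Bessel-delta}---and makes transparent \emph{why} $I_k(a,a;X)$ has no genuine oscillation (the $+$ phase of the Bessel function matches $e(2a\sqrt{x})$). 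The paper's route is shorter if one is willing to cite the contour integral, and it produces the explicit leading constants with no effort; your route trades that citation for the classical Bessel expansion and one invocation of stationary phase. Either way the final inversion step (showing $\sqrt{x}\,I_k(x)$ is a non-vanishing, slowly varying function and then applying Leibniz/Fa\`a di Bruno) is the same in spirit.
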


\begin{rem}
	Alternatively,  a cumbersome asymptotic formula of $I_k (a,a;X) $ is obtained by contour shift in Proposition {\rm1.1} of {\rm\cite{Fan-Sun}}.
\end{rem}

\begin{proof}
	Recall from  \cite[\S 3.2]{AHLQ-Bessel-delta} that
	\begin{align*}
		I_k (a,a;X) = \frac{X}{2\pi i}\int_{(\sigma)}\widetilde{U}(s)\frac{2 i^{k-1}}{\sqrt{\pi} ( {-8\pi i a \hskip -1pt \sqrt{  X}}  )^{2-2s}} \frac{ \Gamma(k-2s+1)\Gamma(2s-3/2)} {\Gamma(k+2s-2)}\mathrm{d}s,
	\end{align*}
	for  $ {3}/{4}<  \sigma  < ( k+1)/2 $, so the expression \eqref{2eq: I(a, a) = I (x)} is clear, with
	\begin{align}
		I_k (x) = \frac 1 {2\pi i} \int_{(\sigma)}\widetilde{U}(s) \frac{4 \sqrt{\pi}}{ i ( {-8\pi i x}  )^{2-2s}} \frac{ \Gamma(k-2s+1)\Gamma(2s-3/2)} {\Gamma(k+2s-2)}\mathrm{d}s.
	\end{align}
As in  \cite[\S 3.2]{AHLQ-Bessel-delta}, by shifting the contour of integration to $\Re(s) = 0$ say, and collecting the residues at $s = 3/4$ and $1/4$, we obtain
\begin{align*}
	I_k (x) =   \frac {   \widetilde{U}(3/4)    }  { (1+i) \sqrt{x} } + O  \bigg(  \frac {1}   {\sqrt{x} x  } \bigg),
\end{align*}
 which yields exactly the asymptotic in \eqref{2eq: asymptotic of I(a,a)}. From this, it is clear that
 $$|I_k(x) | \Gt 1 /\sqrt{x} $$
 as long as $x \Gt 1$ is large in terms of $k$ and $U$. Similar to the above, we find that
\begin{align*}
x^j	I_k^{(j)} (x) =   \frac {   \widetilde{U}(3/4)  (2j-1)!!  }  { (1+i) (-2)^j \sqrt{x} } + O_{  j} \bigg(  \frac {1}   {\sqrt{x} x  } \bigg),
\end{align*}
and hence
$$x^j I_k^{(j)} (x) \Lt_j  1/\sqrt{x}, $$
for $x \Gt 1$. Then the bounds for $1 / I_k (x)$ and its derivatives follow immediately.
\end{proof}

Moreover, we record here \cite[Lemma 3.2]{AHLQ-Bessel-delta}.
\begin{lem}\label{pre-key-lemma, 2}
	Suppose that $b^2 X > 1$. Then $  I_k (a, b; X) = O_A (X^{-A})$ for any $A \geqslant 0$ if $ |   a -  b | \sqrt X > X^{\vepsilon}  $. 
\end{lem}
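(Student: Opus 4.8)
The plan is to convert $I_k(a,b;X)$ into honest oscillatory integrals by means of the large-argument expansion of the Bessel function, and then to apply the non-stationary phase bound of Lemma~\ref{lem: staionary phase}. First I would record that on the support $x\in[X,2X]$ of $U(x/X)$ the Bessel argument $4\pi b\sqrt{x}$ is $\asymp b\sqrt{X}$, and the hypothesis $b^{2}X>1$ forces $b\sqrt X>1$, so this argument stays bounded below throughout the support. Consequently the standard asymptotic expansion
\[
J_{k-1}(z)=e^{iz}W^{+}(z)+e^{-iz}W^{-}(z),\qquad z\geqslant 1,
\]
applies uniformly, where $W^{\pm}\in C^{\infty}[1,\infty)$ are symbols with $z^{j}(W^{\pm})^{(j)}(z)\Lt_{j}z^{-1/2}$ for all $j\geqslant 0$. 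Substituting $z=4\pi b\sqrt{x}$ and using $e(y)=e^{2\pi iy}$ splits
\[
I_k(a,b;X)=I^{+}+I^{-},\qquad I^{\pm}=\int_{0}^{\infty}e\bigl(2(a\pm b)\sqrt{x}\bigr)\,U(x/X)\,W^{\pm}(4\pi b\sqrt{x})\,\nd x.
\]

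Next I would estimate $I^{\pm}$ separately by Lemma~\ref{lem: staionary phase} applied on the interval of length $\asymp X$ to the phase $f(x)=2(a\pm b)\sqrt{x}$ and weight $\tw(x)=U(x/X)W^{\pm}(4\pi b\sqrt{x})$. The natural choice of parameters is $P=X$, $Y=|a\pm b|\sqrt{X}$, and $R\asymp|a\pm b|/\sqrt{X}$ for the phase (since $f^{(i)}(x)\asymp|a\pm b|X^{1/2-i}$ and $|f'(x)|\asymp|a\pm b|/\sqrt X$ on $[X,2X]$), together with $U=X$ and $Z\asymp(b\sqrt{X})^{-1/2}\Lt 1$ for the weight (the bound $x^{j}\tw^{(j)}(x)\Lt_{j}Z$ following from the chain rule and the symbol bounds, using $x\frac{\nd}{\nd x}(4\pi b\sqrt x)\asymp b\sqrt x$, which is comparable to the Bessel argument itself). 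A one-line computation gives that $Y/(R^{2}P^{2})$, $1/(RP)$, and $1/(RU)$ are all $\asymp 1/(|a\pm b|\sqrt{X})$. The key point that makes \emph{both} terms work is the elementary inequality $|a\pm b|\geqslant |a-b|$ for $a,b>0$, so that the hypothesis $|a-b|\sqrt{X}>X^{\vepsilon}$ yields $|a\pm b|\sqrt X>X^{\vepsilon}$ in each case. Lemma~\ref{lem: staionary phase} then gives $I^{\pm}\Lt_{A}X\cdot Z\cdot X^{-\vepsilon A}\Lt X^{1-\vepsilon A}$ for every $A\geqslant 0$, and taking $A$ large in terms of the desired exponent completes the proof.

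The argument is essentially bookkeeping, and I do not expect a genuine obstacle. The only places calling for mild care are: (i) verifying that the Bessel argument never leaves the range where the symbol expansion is valid, which is precisely what $b^{2}X>1$ guarantees; and (ii) noticing that the ``sum frequency'' $a+b$ is at least as large as $|a-b|$, so that the term $I^{+}$, despite looking non-oscillatory when $a$ is small, is controlled on the same footing as $I^{-}$. If one prefers, one can avoid the Bessel asymptotic altogether and instead integrate by parts repeatedly using the derivative/recurrence relations for $J_{k-1}$, but the symbol route combined with Lemma~\ref{lem: staionary phase} is cleaner.
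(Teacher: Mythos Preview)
Your argument is correct: splitting $J_{k-1}$ via its large-argument symbol expansion (legitimate since $4\pi b\sqrt{x}\geqslant 4\pi b\sqrt{X}>4\pi$ on the support) and then applying Lemma~\ref{lem: staionary phase} to each piece, with the observation $a+b\geqslant|a-b|$ handling the non-resonant term, is exactly the standard route to this estimate. Note that the paper does not actually prove this lemma in the text but simply records it as \cite[Lemma~3.2]{AHLQ-Bessel-delta}; your write-up is what one would expect the proof in that reference to look like.
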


Finally, we refine the Bessel $\delta$-identity in \cite[Lemma 3.3]{AHLQ-Bessel-delta} in the following form.

\begin{lem}\label{lem: Bessel-delta}
	Let $h > 0 $ be integer and $ N, X > 1$ be such that  $X\Gt h^2 / N $ and $ X^{1- \vepsilon } >N$. Let $r, n$ be integers in the dyadic interval $ [N, 2N]$. For any $A \geqslant 0$, we have
	\begin{equation}\label{2eq: Bessel-delta}
		\begin{split}
			\delta({r = n}) = \frac {2\pi r^{1/4}} { i^k  h^{1/2} X^{3/4}  }  \cdot \frac{1} {h}  \sum_{ a (\mod h ) } e \lp \frac {a (n-r) } {h } \rp \cdot I_k \lp \frac {\hskip -1pt \sqrt r} {h},\frac {\hskip -1pt \sqrt n} {h };X \rp  V_k \lp\frac{ \sqrt {rX}} {h} \rp &\\
			  + O_{ A}  (X^{-A}  ), &
		\end{split}
	\end{equation}
	where $V_k (x) = 1 / \hskip -1pt \sqrt{x} \hskip 0.5pt I_k (x)$ has bounds $x^j V_k^{(j)} (x) \Lt_j 1$ for any $x \Gt 1$,  $\delta(r=n)$ is the Kronecker $\delta$ that detects $r=n$, and the implied constants above depend only on $k$, $U$, and possibly $A$ or $j$.
\end{lem}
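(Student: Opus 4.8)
The plan is to start from the Bessel $\delta$-identity of \cite[Lemma 3.3]{AHLQ-Bessel-delta}, which in the original form reads
$$
\delta(r = n) = \frac{2\pi r^{1/4}}{i^k h^{1/2} X^{3/4}} \cdot \frac{1}{h} \sum_{a (\mod h)} e\lp \frac{a(n-r)}{h}\rp I_k\lp \frac{\sqrt r}{h}, \frac{\sqrt n}{h}; X\rp + O_A(X^{-A}),
$$
valid when $X \Gt h^2/N$ and $r, n \sim N$; this comes from combining the spectral completeness of the Bessel kernel with the diagonal asymptotic \eqref{2eq: asymptotic of I(a,a)}. The only new feature of Lemma \ref{lem: Bessel-delta} is that we have inserted the extra factor $V_k(\sqrt{rX}/h)$ and simultaneously replaced the bare error term $O(X/(a^2X)^{3/4})$ implicit in \eqref{2eq: asymptotic of I(a,a)} by the exact formula \eqref{2eq: I(a, a) = I (x)} from Lemma \ref{lem: I(a,a)}. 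So the first step is to rewrite the diagonal term: by \eqref{2eq: I(a, a) = I (x)}, when $r = n$ we have $I_k(\sqrt r/h, \sqrt r/h; X) = (i^k X / 2\pi) I_k(\sqrt{rX}/h)$, and one checks directly that the prefactor $\frac{2\pi r^{1/4}}{i^k h^{1/2} X^{3/4}} \cdot \frac{1}{h}\cdot h \cdot \frac{i^k X}{2\pi}\cdot I_k(\sqrt{rX}/h) \cdot V_k(\sqrt{rX}/h)$ collapses to $1$ precisely because $V_k(x) = 1/\sqrt{x}\,I_k(x)$ — indeed $r^{1/4} X^{1/4}/h^{1/2} = \sqrt{\sqrt{rX}/h}$ cancels the $\sqrt{x}$ in $V_k$, and $I_k$ cancels the $1/I_k$. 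Hence inserting $V_k(\sqrt{rX}/h)$ is exactly what is needed to make the leading term reproduce $\delta(r=n)$ identically rather than only asymptotically.

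Next I would handle the off-diagonal contribution to the right-hand side, i.e. the terms with $r \neq n$. Here the point is that, since $V_k$ is bounded and smooth (its bounds $x^j V_k^{(j)}(x) \Lt_j 1$ follow immediately from \eqref{2eq: bounds for 1/I(x)} together with $V_k(x) = x^{-1/2}/I_k(x)$ and the Leibniz rule — write $V_k = x^{-1/2}\cdot (1/I_k)$ and combine $x^j (x^{-1/2})^{(j)} \Lt_j x^{-1/2}$ with $x^j (1/I_k)^{(j)} \Lt_j \sqrt x$), multiplying the summand by $V_k(\sqrt{rX}/h)$ changes its size by at most a bounded factor. Then Lemma \ref{pre-key-lemma, 2} applies: with $a = \sqrt r/h$, $b = \sqrt n/h$ and $b^2 X = nX/h^2 > 1$ (which holds because $X \Gt h^2/N$ and $n \sim N$), whenever $|a - b|\sqrt X = |\sqrt r - \sqrt n|\sqrt X / h > X^{\vepsilon}$ the integral $I_k(a,b;X)$ is $O_A(X^{-A})$. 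Since $r, n$ are integers in $[N, 2N]$ with $r \neq n$, we have $|\sqrt r - \sqrt n| \Gt 1/\sqrt N$, so $|a-b|\sqrt X \Gt \sqrt{X}/(h\sqrt N) = \sqrt{X/(h^2 N)}\cdot (1/\sqrt{h})$... more cleanly, $|a-b|\sqrt X \Gt \sqrt X /(h\sqrt N)$, and the hypothesis $X \Gt h^2/N$ gives $\sqrt X/(h\sqrt N) \Gt 1$; but to beat $X^\vepsilon$ uniformly one instead notes there are at most $O(N)$ pairs, each contributing $O_A(X^{-A})$, and since $N < X^{1-\vepsilon}$ the total off-diagonal contribution (including the harmless polynomial prefactors, which are bounded by powers of $X$) is still $O_A(X^{-A'})$ after adjusting $A'$.

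The main obstacle — though it is more bookkeeping than a genuine difficulty — is keeping the ranges of validity consistent: one must verify that the hypotheses $b^2 X > 1$ of Lemma \ref{pre-key-lemma, 2} and $a^2 X > 1$ of \eqref{2eq: asymptotic of I(a,a)} are met for all relevant $r, n \sim N$ under the stated assumptions $X \Gt h^2/N$ and $X^{1-\vepsilon} > N$, and that the error terms accumulated over the $a\,(\mathrm{mod}\ h)$ sum, the $O(N)$ choices of $(r,n)$, and the prefactor of size $\Lt r^{1/4} X^{-3/4}/h^{1/2} \cdot h \Lt (NX)^{O(1)}$ all remain swallowed by $O_A(X^{-A})$ after relabelling $A$. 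Once the diagonal collapse is checked via the explicit identity \eqref{2eq: I(a, a) = I (x)} and the off-diagonal is dispatched by Lemma \ref{pre-key-lemma, 2}, the identity \eqref{2eq: Bessel-delta} follows, with the claimed bounds on $V_k$ established as above. I would therefore structure the write-up as: (i) recall the identity from \cite[Lemma 3.3]{AHLQ-Bessel-delta}; (ii) substitute \eqref{2eq: I(a, a) = I (x)} into the $r = n$ term and verify the prefactor collapses; (iii) bound the $r \neq n$ terms by Lemma \ref{pre-key-lemma, 2}; (iv) record the $V_k$-bounds from \eqref{2eq: bounds for 1/I(x)}.
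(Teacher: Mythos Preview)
Your diagonal verification (step (ii)) and the derivation of the $V_k$-bounds (step (iv)) are correct and match the paper. The off-diagonal treatment (step (iii)), however, has a real gap.

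You attempt to apply Lemma \ref{pre-key-lemma, 2} directly from the bare inequality $|r-n| \geqslant 1$, which gives only $|\sqrt r - \sqrt n|\sqrt X / h \Gt \sqrt X/(h\sqrt N)$. Your claim that $X \Gt h^2/N$ forces this to be $\Gt 1$ is an arithmetic slip: from $X \Gt h^2/N$ one gets $\sqrt X \Gt h/\sqrt N$, hence $\sqrt X/(h\sqrt N) \Gt 1/N$, not $\Gt 1$. So the hypothesis $|a-b|\sqrt X > X^{\vepsilon}$ of Lemma \ref{pre-key-lemma, 2} need not hold. The fallback to ``$O(N)$ pairs'' is also misplaced, since $r$ and $n$ are fixed in the statement of the lemma; there is no sum over pairs to absorb anything.

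The missing idea---and this is the heart of the paper's proof---is to use the orthogonality of the exponential sum \emph{first}:
\begin{align*}
\frac{1}{h}\sum_{a\,(\mathrm{mod}\,h)} e\bigg(\frac{a(n-r)}{h}\bigg) = \delta(h \mid n-r).
\end{align*}
Thus if $r \neq n$ and the $a$-sum does not already vanish, one has $|r-n| \geqslant h$, not merely $\geqslant 1$. This upgrades the estimate to $|\sqrt r - \sqrt n|\sqrt X / h \Gt \sqrt{X/N}$, and now the hypothesis $X^{1-\vepsilon} > N$ makes this exceed $X^{\vepsilon/2}$, so Lemma \ref{pre-key-lemma, 2} legitimately applies. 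In short, the paper combines the congruence constraint from orthogonality with the analytic constraint from Lemma \ref{pre-key-lemma, 2}; neither is sufficient on its own, and your argument invokes only the latter.
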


\begin{proof}
	  Lemma~\ref{pre-key-lemma, 2} implies that $I_k \lp \hskip -1pt \sqrt r/ h , \hskip -1pt \sqrt n/  h ;X \rp$ is negligibly small unless $|r-n| \leqslant X^{ \vepsilon} h \sqrt {N/X}$, while  the exponential sum in \eqref{2eq: Bessel-delta} gives us $r \equiv n\ (\mod h)$. Consequently, the Kronecker $\delta (r=n)$ follows immediately from $X^{ \vepsilon} h \sqrt {N/X} < h$ as assumed. Finally, the proof is completed in view of the expression of $I_k (a, a; X)$ in \eqref{2eq: I(a, a) = I (x)} and the bounds for $1/I_k (x)$ in \eqref{2eq: bounds for 1/I(x)} in Lemma \ref{lem: I(a,a)}.
\end{proof}

\begin{rem}
	A merit of the Bessel $\delta$-method is the flexibility of modulus $h$---it is chosen to be a {\rm(}large{\rm)} prime $h = p$ in {\rm\cite{AHLQ-Bessel-delta}} or a product of {\rm(}large{\rm)} primes $h = p q$ in {\rm\cite{Fan-Sun}} {\rm(}$q$ is the modulus of their $\chiup${\rm)}. In our case, we shall choose $h = p^{\hskip 0.5pt \beta} q$ for  $\beta = 2\lfloor\gamma/3\rfloor$ and prime $q$ large {\rm(}$p^{\gamma}$ is  the modulus of our $\chiup${\rm)}.
\end{rem}

\section{ Application of Bessel $\delta$-identity and \Voronoi summation}

Let $S (N) = S_{f, \chiup} (N)$ denote the exponential sum in Theorem \ref{main-theorem2}:
\begin{align}
S (N) =	\sum_{n=1}^\infty \lambdaup_g(n) \chiup (n) e ( f(n) ) V\left(\frac{n}{N}\right) .
\end{align}
Firstly, we write
\begin{align*}
	S(N) =    \sum_{r=1}^\infty \chiup (r) e ( f(r) ) V\left(\frac{r}{N}\right) \sum_{n=1}^\infty \lambdaup_g(n) \delta (r=n).
\end{align*}
Let $q > \max \{p, M\}$ be prime. Let $  \beta         < \gamma$. As it will turn out to be our final choice, we set
\begin{align}\label{2eq: nv = 2gamma/3}
	 \beta        =2 \lfloor  {\gamma}/{3}  \rfloor.
\end{align}
 By applying the $\delta$-method identity \eqref{2eq: Bessel-delta} in Lemma \ref{lem: Bessel-delta} with $h = p^{\hskip 0.5pt \beta        } q$ and reducing $a / h$ to its  lowest term, we have
\begin{align}\label{3eq: S(N) = Sch}
	S(N) =\sum_{ \valpha        =0}^ \beta        \lp {S}^{\star} (p^ \valpha         q,p^ \beta         q; N, X)+ {S}^{\star} (p^ \valpha          ,p^ \beta         q; N, X)\rp+O  (X^{-A}  ),
\end{align}
with
\begin{equation}\label{3eq: Sstar(N,X)}
	\begin{split}
		{S}^{\star }(c, h; N, X)  =  \frac {2\pi i^k M^{1/2} N^{1/4}  } {\etaup_g h^{3/2} X^{3/4} } & \sum_{r=1}^\infty \chiup(r) e ( f(r) ) V_{\scriptscriptstyle \natural}     \hskip -1pt \left(\frac{r}{N}\right)  \hskip -2pt \underset{a(\mod c)}{\sumx}e \lp -\frac { ar } {c} \rp  \\
		\cdot &\sum_{n=1}^\infty \lambdaup_g(n) e \lp \frac {an } {c}  \rp I_k \lp \frac {\hskip -1pt \sqrt r} {h},\frac {\hskip -1pt \sqrt n} {h};X \rp ,
	\end{split}
\end{equation}
where $ V_{\scriptscriptstyle \natural} (x) = \etaup_g \xiup (-1) M^{-1/2} \cdot x^{1/4} V(x) V_k (\hskip -1pt \sqrt{NXx} / h) $ (recall that $\xiup (-1) = (-1)^k$). As usual, the  superscript $\star$ in the $a$-sum means $(a,c)=1$. It is clear from the bounds for $V (x)$ and $V_k  (x)$ in Theorem \ref{main-theorem2} and Lemma \ref{lem: Bessel-delta} that the  new $V_{\scriptscriptstyle \natural} (x)$ has the same nature as $V (x)$; namely, $V_{_{\scriptscriptstyle \natural}} (x)$ is supported in $[1,2]$, satisfying $ \text{\it Var\,} (V_{\scriptscriptstyle \natural}) \Lt 1$ and $V_{\scriptscriptstyle \natural}^{(j)} (x) \Lt_{j} \varDelta^j$.

For the moment, let
\begin{equation*}
	h=p^ \beta         q; \quad c=p^ \valpha         q, \, p^ \valpha         .
\end{equation*}
Recall from \eqref{2eq: defn I(a,b;X)} that
\begin{align*}
	I_k \lp \frac {\hskip -1pt \sqrt r} {h},\frac {\hskip -1pt \sqrt n} {h};X \rp =  \int_0^\infty U\left( {x}/{X}\right) e \lp \frac {2   \sqrt{ r x}} {h} \rp J_{k-1} \lp \frac {4\pi  \sqrt{ n x}} {h}  \rp  \mathrm{d}x.
\end{align*}
Our assumptions $(p, M) = 1$ and $q > M$ ensure that $(c, M) = 1$.  By applying the \Voronoi summation in \eqref{2eq: Voronoi} in the {reversed} direction,  we infer that
\begin{equation}\label{3eq: Sch after Voronoi}
	\begin{split}
		{S}^{\star}(c, h; N, X)   =\frac {\xiup(c)h^{1/2}N^{1/4} } {c X^{3/4} }&\sum_{r=1}^\infty \chiup(r)  e (f(r)) V_{\scriptscriptstyle \natural}    \hskip -1.5pt \left(\frac{r}{N}\right)    \hskip -1pt\\
		\cdot &\sum_{n=1}^\infty  \overline{\lambdaup_{  g } (n)} S(r,n;c)  e \hskip -1pt \lp \frac {2 \sqrt {n r}} {  \sqrt M c } \rp \hskip -1pt U \hskip -1pt \left(\frac {h^2n} { MX c^2}\right) \hskip -1pt ,
	\end{split}	
\end{equation}
where $S  (r,n;c)$ is the Kloosterman sum
\begin{equation*}
	S  (r,n;c) = \sumx_{ a (\mod c) } e \bigg( \frac{ ar + \widebar{a} n}{c}\bigg).
\end{equation*}


Moreover, we introduce  an average of \eqref{3eq: S(N) = Sch} over primes $q$ in the dyadic segment $[Q, 2Q]$ ($q \sim Q$)  for  parameter $Q > \max\{p, M\}$; say there are   $Q^{\star} \asymp Q / \log Q$ many such primes. Then
\begin{align}\label{3eq: S(N) = Sch(Q)}
	S(N) =\sum_{ \valpha        =0}^ \beta        \lp {S}_{ { \valpha        } \hskip 0.5pt  \beta         }^{0} (N, X, Q)+ {S}_{ { \valpha        } \hskip 0.5pt  \beta         }^{1} (N, X, Q)\rp+O  (X^{-A}  ),
\end{align}
where
\begin{align}\label{3eq: S0 and S1}
	{S}_{ { \valpha        } \hskip 0.5pt  \beta         }^{\tau} (N, X, Q) = \frac 1 {Q^{\star}} \sum_{q\sim Q} {S}^{\star} (p^ \valpha         q^{\tau}, p^ \beta         q; N, X) .
\end{align}

Finally, we consider here the conditions for the parameters  in
Lemma \ref{lem: Bessel-delta}. It will be convenient to introduce $H$ and $K$  in place of $Q$ and $X$ such that
	\begin{align}\label{4eq: XN=P2K2}
H = p^{\hskip 0.5pt  \beta        } Q, \quad	X = {H^2 K^2}/{N},    \quad  N^{\shskip\vepsilon} <    K < T^{1-\vepsilon}.
\end{align}
The first assumption $  X \Gt h^2 / N$ (for $h = p^{\hskip 0.5pt \beta        } q \sim H$) is justified by $  K > N^{\vepsilon}$. The second assumption $ X^{1- \vepsilon } >N$ amounts to
\begin{align}
	\label{4eq: assumption on K}
	H > {N^{1+\vepsilon}} / { K }.
\end{align}

\section{Application of Poisson summation}

Let $S  (n, c; N) $ denote the $r$-sum in \eqref{3eq: Sch after Voronoi}:
\begin{align}\label{4eq: r-sum}
	S  (n, c; N) = \sum_{r=1}^\infty \chiup(r) S(r,n;c) e (f(r))   e \hskip -1pt \lp \frac {2 \sqrt {n r}} {  \sqrt M c } \rp V_{\scriptscriptstyle \natural}    \hskip -1.5pt \left(\frac{r}{N}\right)  .
\end{align}
Recall that $\chiup (r)$ is of modulus $p^{\gamma}$ and $f (r) = T \phi (r/N) $.  For $c = p^{ \valpha        } q^{\tau}$, we apply the Poisson summation of modulus $[c, p^{\gamma}] = p^{\gamma} q^{\tau}$ to transform $S  (n, c; N)$ into
\begin{align}\label{4eq: r-sum after Poisson}
	S  (n, c; N) =   { N} \sum_{r = -\infty}^{\infty}  \mathfrak{C}_{ \chiup}  (n, r, c, [c, p^{\gamma}])  \CalJ ( n, r, c, [c, p^{\gamma}]; N  )
\end{align}
where
\begin{equation}\label{4eq: character sum C}
	\mathfrak{C}_{\chiup} (n, r, c, d) = \frac 1 {d}	\sum_{b (\mod d)} \chiup (b) S(b, n; c) e \hskip -1pt \lp \frac {r b } { d} \rp,
\end{equation}
\begin{equation}\label{4eq: def J}
	\begin{split}
		\CalJ (y,r,c, d; N)=\int_{0 }^{\infty} V_{\scriptscriptstyle \natural}   (x) e \hskip -1pt  \left( T \phi (x ) + \frac{2 \sqrt{Nx y}}{\sqrt M c} -\frac{rNx}{d}\right)   \mathrm{d}x,
	\end{split}
\end{equation}
for $h^2 y / c^2 \sim M X$.

\subsection{Evaluation of the character sum $\mathfrak{C}_{  \chiup}  (n, r, c, d) $} We start by simplifying the character sum   $\mathfrak{C}_{  \chiup}  (n, r, c, d) $   defined as in \eqref{4eq: character sum C}.

First, we  consider the case $(c, d) = (p^{ \valpha        } q,  p^{\gamma} q)$. To this end, we open the Kloosterman sum $S(b, n; c)$  in \eqref{4eq: character sum C}, obtaining
\begin{align*}
\frac 1 {p^\gamma q} \hskip 2pt	\sumx_{ a (\mod  p^ \valpha         q) }  e \hskip -1pt \lp  \frac {\widebar{a} n } {p^ \valpha         q} \rp \sum_{b(\mod   p^\gamma q)} \chiup (b)   e \hskip -1pt \lp \frac {b (r+ap^{\gamma -  \valpha        }) } { p^\gamma q } \rp .
\end{align*}
Then we use reciprocity
$$\frac 1 {p^{\gamma} q} \equiv \frac {\widebar{q}} {p^{\gamma}} + \frac {\widebar{p}^{\gamma}} {q} (\mod 1),$$
as $(p, q) = 1$, to write the inner $b$-sum as
	\begin{align*}
		 \sum_{b_1 (\mod  p^\gamma )} \chiup (b_1) e \hskip -1pt  \lp   \frac { b_1 (r+ap^{\gamma- \valpha        } ) \widebar{q}   } {p^\gamma}  \rp \sum_{b_2 (\mod  q )}   e \hskip -1pt \lp  \frac { b_2 (r+ap^{\gamma- \valpha        } ) \widebar{p}^\gamma } {q}  \rp.
	\end{align*}
Further, the $b_1$-sum equals $ \widebar{\chiup} (r+a p^{\gamma -  \valpha        }) {\chiup} (q) \cdot  \tauup ({\chiup})  $, where $\tauup ({\chiup}) $ is the Gauss sum
\begin{align*}
	 \tauup ({\chiup}) = \sum_{a (\mod  p^\gamma )} \chiup (a) e \lp \frac {a} {p^{\gamma}} \rp,
\end{align*}  while the $b_2$-sum yields the congruence $ a \equiv - r \widebar{p}^{\gamma- \valpha        } (\mod q)$.  Thus we arrive at
\begin{align*}
	\frac { \chiup(q)   \tauup ({\chiup}) } {p^{\gamma}}  \mathop{\sumx_{ a (\mod  p^ \valpha         q) }}_{ a \equiv - r \widebar{p}^{\gamma- \valpha        } (\mod q) } \widebar{\chiup} (r+a p^{\gamma -  \valpha        })  e \hskip -1pt  \lp  \frac {\widebar{a} n } {p^ \valpha         q} \rp.
\end{align*}
Note that one must have $(r, q) = 1$, as the sum is empty if otherwise.  This sum may be simplified by reciprocity, and we conclude that
\begin{align}\label{4eq: character sum, 1}
	\mathfrak{C}_{  \chiup} (n, r, p^{ \valpha        }q, p^{\gamma} q) = \frac { \chiup(q)   \tauup ({\chiup}) } {p^{\gamma}} e \bigg( \hskip -2pt - \frac {p^{\gamma} \widebar{p}^{2 \valpha        }  \widebar{r} n } {q} \bigg) \sumx_{ a (\mod  p^ \valpha         ) } \overline{\chiup} (r+ap^{\gamma -  \valpha        }) e \lp  \frac {\overline{a q} n } {p^ \valpha        } \rp.
\end{align}
Similarly, for the degenerate case $(c, d) = (p^{ \valpha        } ,  p^{\gamma} )$, we have
\begin{align}
	 \mathfrak{C}_{  \chiup} (n, r, p^{ \valpha        } , p^{\gamma} ) = \frac {\tauup ({\chiup}) } {p^{\gamma}}  \sumx_{ a (\mod  p^ \valpha         ) } \overline{\chiup} (r+ap^{\gamma -  \valpha        }) e \lp  \frac {\widebar{a } n } {p^ \valpha        } \rp.
\end{align}
Trivially, as $ |\tauup ({\chiup})| = p^{\gamma/2} $, we have
\begin{align}\label{4eq: bound for C, 0}
	\mathfrak{C}_{  \chiup} (n, r, p^{ \valpha        } , p^{\gamma} ) < p^{ \valpha         - \gamma/2 }.
\end{align}

\subsection{Estimates for the integral $ \CalJ(y,r,c,d;N)$}

For the integral $\CalJ(y,r,c,d;N)$ defined in \eqref{4eq: def J}, we have the following lemmas.

\begin{lem} \label{lem: range for r}
	Let $\varDelta \geqslant 1$. Define
	\begin{align}
		\label{4eq: defn of S}
		S =   {T +    \varDelta N^{\vepsilon}   }    .
	\end{align}
	Suppose that  $V_{\scriptscriptstyle \natural} \in C_c^{\infty} [1, 2]$ has bounds  $V_{\scriptscriptstyle \natural}^{(j)} (x) \Lt_j \varDelta^j$. Let $ h^2 y / c^2 \sim M X $.  Then for any $A \geqslant 0$ we have $ \CalJ(y,r, c, d;N) = O_A (N^{-A})$ whenever $ |r   | \Gt S d / N  $.
\end{lem}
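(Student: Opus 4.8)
\textit{Proof plan.} By \eqref{4eq: def J}, $\CalJ(y,r,c,d;N) = \int_{1}^{2} V_{\scriptscriptstyle \natural}(x)\, e(\Psi(x))\, \mathrm{d}x$ with phase
\[
\Psi(x) \;=\; T\phi(x) \;+\; \frac{2\sqrt{Ny}}{\sqrt M\, c}\,\sqrt{x} \;-\; \frac{rN}{d}\,x .
\]
The plan is to run a non-stationary-phase argument, packaged through Lemma~\ref{lem: staionary phase}. First I would pin down the size of the coefficient $B := 2\sqrt{Ny}/(\sqrt M\, c)$ of $\sqrt{x}$: the constraint $h^2 y/c^2 \sim MX$ gives $B \asymp \sqrt{NX}/h$, and since $h = p^{\beta}q \asymp H$ and $X = H^2K^2/N$ by \eqref{4eq: XN=P2K2}, this is $\asymp HK/h \asymp K$, hence $B \Lt T^{1-\vepsilon} \Lt S$ because $K < T^{1-\vepsilon}$ and $T \leqslant S$. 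Consequently, using $|\phi^{(j)}| \Lt_j 1$, on $[1,2]$ we have $\Psi^{(i)}(x) \Lt T + B \Lt S$ for all $i \geqslant 2$, and also $|T\phi'(x)| + \tfrac12 B x^{-1/2} \Lt S$.

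Next I would show that, in the claimed range of $r$, the linear term controls $\Psi'$: for $x \in [1,2]$,
\[
|\Psi'(x)| \;\geqslant\; \frac{|r|N}{d} - \Big(|T\phi'(x)| + \tfrac12 B x^{-1/2}\Big) \;\geqslant\; \frac{|r|N}{d} - C_0 S
\]
for a constant $C_0 = C_0(\phi) > 0$. Taking the implied constant in the hypothesis $|r| \Gt Sd/N$ large enough that $|r|N/d \geqslant 2C_0 S$, this yields $|\Psi'(x)| \geqslant \tfrac12 |r|N/d =: R$, with $R \Gt S$.

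Finally I would apply Lemma~\ref{lem: staionary phase} on $[a,b] = [1,2]$ with $f = \Psi$, $\tw = V_{\scriptscriptstyle \natural}$, and parameters $P = 1$, $Y = S$, $Z = 1$, $U = 1/\varDelta$ (legitimate since $V_{\scriptscriptstyle \natural}^{(j)} \Lt_j \varDelta^j$), and $R$ as above. For any $A \geqslant 0$ this gives
\[
\CalJ(y,r,c,d;N) \;\Lt_A\; \Big(\frac{S}{R^2} + \frac{1}{R} + \frac{\varDelta}{R}\Big)^{\!A}.
\]
Since $R \Gt S \geqslant T$ one has $S/R^2 \leqslant 1/R$, and since $R \Gt S \geqslant \varDelta N^{\vepsilon}$ and $\varDelta \geqslant 1$ the bracket is $\Lt \varDelta/R \Lt N^{-\vepsilon}$; hence $\CalJ(y,r,c,d;N) \Lt_A N^{-A\vepsilon}$, which is $O_A(N^{-A})$ as $A$ is arbitrary.

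The part needing the most care — though it is not really an obstacle — is the bookkeeping behind the first two paragraphs: confirming from $h^2y/c^2 \sim MX$ together with $X = H^2K^2/N$ that the $\sqrt{x}$-coefficient $B$ is genuinely of size $\asymp K \Lt T$, so that it cannot compete with the linear term, and then calibrating the implied constant in ``$|r| \Gt Sd/N$'' so as to absorb the $O(S)$ contributed by $T\phi'$ and by the $\sqrt{x}$-term. The presence of the $\varDelta N^{\vepsilon}$ in the threshold $S$ is precisely what the weight's derivative bounds force, via the parameter $U = 1/\varDelta$ in Lemma~\ref{lem: staionary phase} (each integration by parts costs a factor $\asymp \varDelta/R$).
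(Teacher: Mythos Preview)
Your proposal is correct and follows essentially the same route as the paper: both compute the phase derivative, verify that the $\sqrt{x}$-coefficient is $\asymp K \leqslant T$ via $h^2y/c^2 \sim MX$ and \eqref{4eq: XN=P2K2}, conclude that the linear term $-rN x/d$ dominates $\Psi'$ in the stated range, and then invoke Lemma~\ref{lem: staionary phase} with $P=1$, $U=1/\varDelta$. The only cosmetic differences are your parameter choices $Y=S$ and $R=\tfrac12|r|N/d$ in place of the paper's $Y=T$ and $R=S$; since $T\leqslant S$ and $|r|N/d \Gt S$, both choices are legitimate and lead to the same $O(N^{-\vepsilon})$ bracket.
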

This lemma manifests that  one can effectively truncate the sum at $|r  | \sasymp S d/ N$, at the cost of a negligible error.

\begin{proof}

	For  $ h^2 y / c^2 \sim M X $, the derivative of the phase function in \eqref{4eq: def J}  is equal to
	\begin{align*}
		- \frac {N r} {d} + T \phi'(x) + \frac {\sqrt{N y}} {\sqrt{M x} c} = - \frac {N r} {d} + O \bigg( T + \frac {\sqrt{N X}} {H} \bigg).
	\end{align*}
	By \eqref{4eq: XN=P2K2},
	\begin{align*}
		\max \big\{ T, \sqrt{NX}/ H \big\} = \max \{T, K \} = T.
	\end{align*}
	Therefore, the derivative is   dominated by $ - Nr / d$ since $|Nr/d| \Gt  S > T  $, while the $i$-th derivative ($i \geqslant 2$) is bounded by $O_i(T)$. Finally, we apply Lemma \ref{lem: staionary phase} with $Y = T$, $P   = 1$, $U = 1/\varDelta$, and $R =  S$, so that
	$$\frac {Y} {R^2P^2} + \frac 1 {RP} + \frac 1 {RU} = \frac {T} {(T +  \varDelta N^{\vepsilon})^2 } + \frac 1 {T +   \varDelta N^{\vepsilon}} + \frac {\varDelta} {T+ \varDelta N^{\vepsilon}} = O \lp \frac 1 {N^{\vepsilon}} \rp. $$
	
\end{proof}

\delete{\zxg{In fact, we may drop the $N^\vepsilon$ and assume $|r/d|\geqslant R$. By applying \ref{lem: staionary phase} with $a=1$, $b=1$,
		\begin{equation*}
			Y=T,\quad Q=Z=1,\quad U=1/\varDelta,
		\end{equation*}
		and $R$ as in \eqref{lem: range for r}, we have for any $A\geqslant0$
		\begin{equation*}
			\begin{split}
				\CalJ(y,r, c, d;N)&\Lt_A\lp \frac{T}{R^2N^2}+\frac{\varDelta}{RN}\rp^A=\lp\frac{1}{RN}\cdot\frac{T}{T+\varDelta N^\vepsilon}+\frac{\varDelta}{T+\varDelta N^\vepsilon}\rp^A\\
				&\Lt N^{-\vepsilon A}.
			\end{split}
		\end{equation*}
		Thus the error term is negligibly small.
}}

\begin{lem}\label{lem: J(y,r,m,k,N) <}
	Let $ \Var (V_{\scriptscriptstyle \natural}) \Lt 1$. 	For $ h^2 y / c^2 \sim M X$, we have
	\begin{align}\label{4eq: bound for J}
		\CalJ (y,r, c, d;N) \Lt \frac{1}{\sqrt T}.
	\end{align}
\end{lem}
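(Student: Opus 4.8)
The plan is to estimate $\CalJ (y,r,c,d;N)$ directly by the one-dimensional second derivative test for oscillatory integrals with a weight of bounded variation (see Lemma A.2 of \cite{AHLQ-Bessel-delta}). Write the integral in \eqref{4eq: def J} as $\int_{1}^{2} V_{\scriptscriptstyle \natural} (x)\, e ( \psi (x) )\, \mathrm{d}x$, where
\[
	\psi (x) = T \phi (x) + \frac{2\sqrt{Nxy}}{\sqrt{M}\, c} - \frac{rNx}{d}.
\]
The crucial observation is that the third term of $\psi$ is \emph{linear} in $x$ and hence contributes nothing to $\psi''$; consequently the size of $\psi''$, and therefore the amount we save, is completely independent of $r$ and $d$, which is precisely why the estimate carries no condition on those parameters.

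First I would pin down the size of the middle term, $g (x) := \bigl( 2\sqrt{Ny}/\sqrt{M}\, c \bigr) \sqrt{x}$. From the running hypothesis $h^2 y / c^2 \sim M X$, together with $X = H^2 K^2 / N$ and $h \sim H = p^{\hskip 0.5pt \beta} q$ from \eqref{4eq: XN=P2K2}, one computes $2\sqrt{Ny}/\sqrt{M}\, c \asymp \sqrt{NX}/h \asymp K$, so that $g^{(j)} (x) \asymp K$ on $[1,2]$ for every $j \geqslant 1$. Since $N^{\vepsilon} < K < T^{1-\vepsilon}$, while $\phi$ satisfies $|\phi''| \Gt 1$ and $\phi^{(j)} \Lt_j 1$ on $(1/2, 5/2)$, the term $T\phi''$ dominates $g''$ once $T$ is large in terms of $\phi$ and $\vepsilon$: then $|\psi'' (x)| \geqslant |T \phi'' (x)| - |g'' (x)| \Gt T$ throughout $[1,2]$, the function $\psi''$ is continuous and non-vanishing there (hence of constant sign), and $\psi^{(j)} (x) \Lt T$ for all $j \geqslant 2$.

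With these inputs the second derivative test yields $\CalJ (y,r,c,d;N) \Lt \Var (V_{\scriptscriptstyle \natural})\, T^{-1/2} \Lt T^{-1/2}$, using $\Var (V_{\scriptscriptstyle \natural}) \Lt 1$. In the complementary range where $T = O(1)$ the claim is trivial, since $\CalJ (y,r,c,d;N) \Lt \int_{1}^{2} |V_{\scriptscriptstyle \natural} (x)|\, \mathrm{d}x \Lt 1 \Lt T^{-1/2}$. I do not expect any real obstacle: the one step that requires care is the size estimate $2\sqrt{Ny}/\sqrt{M}\, c \asymp K$, which has to be verified so that the Bessel-type term $g$ is genuinely negligible against $T\phi''$ in the second derivative; everything else is a routine application of the van der Corput / stationary phase machinery already used above.
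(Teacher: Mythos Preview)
Your argument is correct: the linear term drops out of $\psi''$, the computation $2\sqrt{Ny}/(\sqrt{M}\,c) \asymp \sqrt{NX}/h \asymp K$ goes through exactly as you say, and then $K < T^{1-\vepsilon}$ lets $T\phi''$ dominate so that the second derivative test (Lemma~A.2 of \cite{AHLQ-Bessel-delta}) gives $T^{-1/2}$.

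The paper does not argue directly. Instead it observes that the change of variables $y_{\oo} = h^{2}y/c^{2}$, $N_{\oo} = Nh^{2}/d^{2}$, $h_{\oo} = h^{2}/d$ turns $\CalJ(y,r,c,d;N)$ into an instance of the integral $\CalJ_{\oo}(y_{\oo},r,h_{\oo};N_{\oo})$ already treated in \cite[Lemma~5.1]{AHLQ-Bessel-delta}, and then quotes that lemma. Of course that cited lemma is itself proved by exactly the second-derivative-test computation you wrote down, so the underlying analysis is the same. The paper explicitly acknowledges that the direct argument works (``admittedly\ldots an easy application of the second derivative test''); its reason for preferring the change-of-variables reduction is that the \emph{same} substitution later reduces the much harder three-variable integral bound (Lemma~\ref{lem: the integral I}) to \cite[Lemma~5.5]{AHLQ-Bessel-delta}, sparing a three-page repetition. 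Your self-contained argument buys transparency here but forfeits that later shortcut.
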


\begin{proof}
	For $y \sim MX$, $h \sim H$, and $X$, $N$, $H$, $K$ as in \eqref{4eq: XN=P2K2},   define
	\begin{equation}\label{4eq: def Jo}
		\begin{split}
			\CalJ_{\mathrm{o}} (y,r,h; N)=\int_{0 }^{\infty} V_{\scriptscriptstyle \natural}   (x) e \hskip -1pt  \left( T \phi (x/N) + \frac{2 \sqrt{Nx y}}{\sqrt M h} -\frac{rNx}{h}\right)   \mathrm{d}x,
		\end{split}
	\end{equation}
	which was originally defined    in \cite[(5.3)]{AHLQ-Bessel-delta}\footnote{Note that $p \sim P$ is used in \cite{AHLQ-Bessel-delta} in place of our $h \sim H$.}. It is proven in \cite[Lemma 5.1]{AHLQ-Bessel-delta}  that
	\begin{align}\label{4eq: bound for Jo}
		\CalJ_{\oo} (y,r,h; N) \Lt \frac 1 {\sqrt{T}} .
	\end{align}
	Evidently, the $\CalJ_{\oo}$-integral is  a special case of the $\CalJ$-integrals.   However,  the change of variables
	\begin{align}\label{4eq: change of variables}
		y_{\oo} = h^2 y / c^2, \quad N_{\oo} = N h^2/ d^2, \quad h_{\oo} = h^2/ d, \quad \text{($H_{\oo} = H h/d$),}
	\end{align}
	may also turn the $\CalJ$-integral into a  $\CalJ_{\oo}$-integral in the sense that
	\begin{align}
		\CalJ (y,r,c, d; N) = \CalJ_{\oo} (y_{\oo},r,h_{\oo}; N_{\oo}).
	\end{align}
Note that  $N_{\oo} > 1$ or $ N > p^{2\gamma -2\beta}$ is ensured here by \eqref{1eq: range of N} and \eqref{2eq: nv = 2gamma/3}. 	Hence \eqref{4eq: bound for J} is a consequence of \eqref{4eq: bound for Jo}.
\end{proof}

Admittedly, Lemma \ref{lem: J(y,r,m,k,N) <} is an easy application of the second derivative test (see \cite[Lemma 5.1, A.2]{AHLQ-Bessel-delta}). However, the true value of the observation above is for a direct deduction of Lemma \ref{lem: the integral I} below from \cite[Lemma 5.5]{AHLQ-Bessel-delta}. In this way, we may avoid a repetition  as in \cite{Fan-Sun} of the three-page proof of \cite[Lemma 5.5]{AHLQ-Bessel-delta}.

\subsection{Estimates for $ {S}_{    { \protect \valpha}   \beta         }^{0} (N, X, Q)  $}

From \eqref{4eq: r-sum after Poisson},  \eqref{4eq: bound for C, 0}, and Lemma \ref{lem: range for r},  \ref{lem: J(y,r,m,k,N) <},   it follows that
\begin{align*}
	S  (n, p^{ \valpha        } ; N) \Lt    { N}   \sum_{|r| \Lt  S p^{\gamma}/N}  \frac {p^{ \valpha        - \gamma/2} } {\sqrt{T}} + N^{-A} \Lt  \frac {S   p^{\gamma/2+ \valpha        }} {\sqrt{T}} .
\end{align*}
By \eqref{3eq: Sch after Voronoi}, \eqref{4eq: r-sum},  and the Rankin--Selberg bound in \eqref{2eq: Ramanujan} (and Cauchy),  we infer that
\begin{align*}
	{S}^{\star}(p^{ \valpha        }, p^{ \hskip 0.5pt \beta        } q; N, X) &  \Lt \frac {\sqrt{p^{\hskip 0.5pt  \beta   } q}  N^{1/4}    } { p^{ \valpha        }  X^{3/4} } \frac {S p^{\gamma/2+ \valpha        }} {\sqrt{ T} } \sum_{n \sim MX  / p^{2 \beta        -2 \valpha        } q^2} |\lambdaup_{  g } (n)|   \\
	& \Lt_{g} \frac { S (N X)^{1/4 }    p^{\gamma /2  - 3  \beta        / 2 + 2  \valpha        } } {   Q^{3/2} \sqrt{T} }\\
	& = \frac {  S \sqrt{  K} p^{\gamma /2  -    \beta          + 2  \valpha        } } { Q \sqrt{T} },
\end{align*}
where the last equality follows from \eqref{4eq: XN=P2K2}. 
Consequently,
\begin{align}\label{4eq: bound for S0}
	\sum_{ \valpha         = 0}^{ \beta        }  {S}_{ { \valpha        }\hskip 0.5pt   \beta         }^{0} (N, X, Q) \Lt \frac {  S \sqrt{  K} p^{\gamma /2  +  \beta         } } { Q \sqrt{T} }.
\end{align}

\subsection{Formulation for $ {S}_{ {\protect \valpha        }    \beta         }^{1} (N, X, Q)  $}
To summarize, in view of \eqref{3eq: Sch after Voronoi}, \eqref{3eq: S0 and S1}, \eqref{4eq: XN=P2K2}, \eqref{4eq: r-sum}--\eqref{4eq: character sum, 1}, and Lemma \ref{lem: range for r}, with simpler choice of notation, we formulate $ {S}_{ { \valpha        } \hskip 0.5pt  \beta         }^{1} (N, X, Q)   $ in the following way.

\begin{lem}\label{lem: S1 after Voronoi-Poisson}
	We have
	\begin{equation}
		\label{5eq: S1}
		\begin{split}
			{S}_{ { \valpha        }\hskip 0.5pt   \beta         }^{1} (N, X, Q) = & \frac {\xiup(p^{ \valpha        }) \tauup (\chiup)  N^{2} } { p^{\gamma +  \beta         +  \valpha        } Q^{\star} (QK)^{3/2}  } \sum_{n }   \overline{\lambdaup_{  g } (n)}   U \hskip -1pt \left(\frac {p^{2 \beta        -2 \valpha        } n} { MX  }\right)    \\
			& \qquad  \cdot  \sum_{q\sim Q} \hskip -1pt \frac {\xiup \hskip 0.5pt \chiup (q)} {\sqrt{q}} \hskip -1pt \mathop{\sum_{(r, \hskip 0.5pt q) = 1}}_{|r| \Lt S p^{\gamma} q/N } \mathfrak{C}_{  \chiup}^{ \valpha        } (n, r, q) \CalJ_{ \beta         \gamma}  ( p^{2 \beta        -2 \valpha        } n, r, q  ) ,
		\end{split}
	\end{equation}
	up to a negligible error term, where
	\begin{align}\label{4eq: char sum C(nrq)}
		\mathfrak{C}_{  \chiup}^{ \valpha        } (n, r, q) =   e \bigg( \hskip -2pt - \frac {p^{\gamma} \widebar{p}^{2 \valpha        }  \widebar{r} n } {q} \bigg) \sumx_{ a (\mod  p^ \valpha         ) } \overline{\chiup} (r+ap^{\gamma -  \valpha        }) e \bigg(   \frac {\overline{a q} n } {p^ \valpha        } \bigg),
	\end{align}
	\begin{align}\label{4eq: integral J(yrq)}
		\CalJ_{ \beta         \gamma} ( y, r, q  ) = \int_{0 }^{\infty} V_{\scriptscriptstyle \natural}   (x) e    \bigg( T \phi (x ) + \frac{2 \sqrt{Nx y}}{\sqrt M p^{\hskip 0.5pt \beta        } q} -\frac{rNx}{p^{\gamma} q}\bigg)   \mathrm{d}x ,
	\end{align}
for $y \sim MX$.
	
\end{lem}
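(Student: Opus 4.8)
The plan is to assemble Lemma~\ref{lem: S1 after Voronoi-Poisson} by bookkeeping the pieces already prepared. Starting from \eqref{3eq: S0 and S1} with $\tau = 1$, i.e.\ $c = p^{\valpha} q$ and $h = p^{\hskip 0.5pt \beta} q$, I would substitute the \Voronoi-transformed expression \eqref{3eq: Sch after Voronoi} for ${S}^{\star}(p^{\valpha} q, p^{\hskip 0.5pt \beta} q; N, X)$, then insert the Poisson-transformed $r$-sum \eqref{4eq: r-sum after Poisson} (valid since $\chiup (r) S(r, n; p^{\valpha} q)$ and $e(f(r)) e(2\sqrt{nr}/\sqrt{M} p^{\valpha} q)$ together have period dividing $[p^{\valpha} q, p^{\gamma}] = p^{\gamma} q$, the relevant modulus because $q$ is prime and distinct from $p$). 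This replaces the $r$-sum by $N \sum_{r} \mathfrak{C}_{\chiup}(n, r, p^{\valpha} q, p^{\gamma} q)\, \CalJ(n, r, p^{\valpha} q, p^{\gamma} q; N)$.

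Next I would feed in the explicit evaluation of the character sum from \eqref{4eq: character sum, 1}, namely
$$\mathfrak{C}_{\chiup}(n, r, p^{\valpha} q, p^{\gamma} q) = \frac{\chiup (q) \tauup (\chiup)}{p^{\gamma}}\, e\bigg( -\frac{p^{\gamma} \widebar{p}^{2\valpha} \widebar{r} n}{q}\bigg) \sumx_{a (\mod p^{\valpha})} \overline{\chiup}(r + a p^{\gamma - \valpha}) e\bigg(\frac{\overline{aq} n}{p^{\valpha}}\bigg),$$
which I would abbreviate using the notation $\mathfrak{C}^{\valpha}_{\chiup}(n, r, q)$ in \eqref{4eq: char sum C(nrq)}; the factor $\chiup (q) \tauup (\chiup)/p^{\gamma}$ gets pulled out front, and since $q \sim Q$ and $q$ is prime the condition $(r, q) = 1$ from the evaluation becomes the restriction on the $r$-sum. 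I would similarly rewrite the integral $\CalJ(n, r, p^{\valpha} q, p^{\gamma} q; N)$ as $\CalJ_{\beta \gamma}(p^{2\beta - 2\valpha} n, r, q)$: here one uses $f(r) = T\phi (r/N)$, the dilation $x \mapsto Nx$ inside \eqref{4eq: def J} to normalize $V_{\scriptscriptstyle \natural}(x/N)$ to $V_{\scriptscriptstyle \natural}(x)$ (absorbing the resulting Jacobian $N$, which combines with the $N$ from Poisson and the prefactors), and the identity $h^2 = p^{2\beta} q^2$, $c^2 = p^{2\valpha} q^2$ so that the argument $h^2 n/(MXc^2) = p^{2\beta - 2\valpha} n/(MX)$ in the $U$-cutoff becomes the stated $U(p^{2\beta - 2\valpha} n/(MX))$ and the oscillatory phase $2\sqrt{Nxy}/\sqrt{M}\, p^{\valpha} q$ with $y$ shifted to $p^{2\beta - 2\valpha} n$ reduces to $2\sqrt{Nx\cdot p^{2\beta - 2\valpha} n}/\sqrt{M}\, p^{\valpha} q = 2\sqrt{Nxn}\, p^{\beta - \valpha}/\sqrt{M}\, p^{\valpha} q$... matching \eqref{4eq: integral J(yrq)} after writing $y = p^{2\beta - 2\valpha} n$ and simplifying $\sqrt{y}/p^{\valpha} q = \sqrt{n}\, p^{\beta - \valpha}/p^{\valpha} q = \sqrt{n}/p^{\hskip 0.5pt\beta} q$ times $p^{2\beta - 2\valpha}$... so in fact the phase in \eqref{4eq: integral J(yrq)} is written with $y \sim MX$ directly and $\sqrt{y}/(\sqrt{M} p^{\hskip 0.5pt\beta} q)$, consistent with the substitution $y \leftarrow p^{2\beta-2\valpha}n$. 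Finally I would collect all the scalar prefactors from \eqref{3eq: Sch after Voronoi} ($\xiup (c) h^{1/2} N^{1/4}/(cX^{3/4})$ with $c = p^{\valpha} q$, $h = p^{\hskip 0.5pt\beta} q$, $X = H^2 K^2/N = p^{2\beta} Q^2 K^2/N$), the $N$ from Poisson, the $\chiup (q)\tauup(\chiup)/p^{\gamma}$ from the character sum, and the $1/Q^{\star}$ from the average; substituting $X^{3/4} = (p^{2\beta} Q^2 K^2/N)^{3/4}$ and $N^{1/4} \cdot N$ and collecting powers of $p$, $Q$, $K$, $N$ should yield the stated prefactor $\xiup (p^{\valpha})\tauup(\chiup) N^2/(p^{\gamma + \beta + \valpha} Q^{\star} (QK)^{3/2})$, with $\xiup (q)$ absorbed into the $\xiup\hskip 0.5pt\chiup (q)/\sqrt{q}$ inside the $q$-sum (the $1/\sqrt{q}$ coming from $h^{1/2}/c = q^{1/2} p^{\hskip 0.5pt\beta}/(q p^{\valpha}) = p^{\beta - \valpha}/\sqrt{q}$).

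The negligible error term comes from two sources: the $O(X^{-A})$ tail in the Bessel $\delta$-identity \eqref{3eq: S(N) = Sch(Q)}, which is harmless because $X$ is a positive power of $N$ under \eqref{4eq: XN=P2K2}; and the truncation of the $r$-sum to $|r| \Lt S p^{\gamma} q/N$ using Lemma~\ref{lem: range for r} with $d = [c, p^{\gamma}] = p^{\gamma} q$, which contributes $O_A(N^{-A})$ per term and hence in total after the (polynomially bounded) $n$- and $q$-sums. There is no real obstacle here — the lemma is purely a reorganization — but the step demanding the most care is the bookkeeping of prefactors and of the rescaling in the oscillatory integral (tracking how $h$, $c$, $d$, $X$, $N$, $Q$, $K$ interact through the change of variables $x \mapsto Nx$ and the substitution $y = p^{2\beta - 2\valpha} n$), so that the powers of $p$, $Q$, $K$, $N$ in \eqref{5eq: S1} come out exactly as claimed and the residual $U$-cutoff forces $n \sim MX/p^{2\beta - 2\valpha}$ consistently with the $n$-range implicit in \eqref{3eq: Sch after Voronoi}.
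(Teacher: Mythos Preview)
Your proposal is correct and follows exactly the route the paper takes: the paper itself offers no detailed proof, merely stating that the lemma is a summary ``in view of \eqref{3eq: Sch after Voronoi}, \eqref{3eq: S0 and S1}, \eqref{4eq: XN=P2K2}, \eqref{4eq: r-sum}--\eqref{4eq: character sum, 1}, and Lemma~\ref{lem: range for r}, with simpler choice of notation''. You have supplied precisely those details, and your prefactor computation is right. One small clarification: the dilation $x \mapsto Nx$ is already absorbed into the definition \eqref{4eq: def J} of $\CalJ$ (note $V_{\scriptscriptstyle\natural}(x)$, not $V_{\scriptscriptstyle\natural}(x/N)$, appears there), so the single factor of $N$ in \eqref{4eq: r-sum after Poisson} already accounts for the Jacobian---there is only one factor of $N$ from the Poisson step, not two.
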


\section{Application of Cauchy inequality and Poisson summation}


Next, we apply the Cauchy inequality to \eqref{5eq: S1} and the Rankin--Selberg bound for the Fourier coefficients $  {\lambdaup_{  g} (n)} $ as in \eqref{2eq: Ramanujan}, getting
\begin{align}\label{5eq: S1 after Cauchy}
	{S}_{ { \valpha        }\hskip 0.5pt   \beta         }^{1} (N, X, Q)  \Lt_g \frac{N^{3/2}}{ p^{\gamma/2 +  \beta         } Q^{\star} \hskip -1pt \sqrt{QK}  } \cdot \sqrt{T^2_{ \valpha          \beta        } (N, X, Q) },
\end{align}
where $T^2_{ \valpha          \beta        } (N, X, Q) $ is defined by
\begin{align}
	 \sum_{n } U \hskip -1pt \bigg(\frac {p^{2 \beta        -2 \valpha        } n} { MX  }\bigg) \Bigg| \sum_{q\sim Q} \hskip -1pt \frac {\xiup \hskip 0.5pt \chiup (q)} {\sqrt{q}} \hskip -1pt \mathop{\sum_{(r, \hskip 0.5pt q) = 1}}_{|r| \Lt S p^{\gamma} q/N } \mathfrak{C}_{  \chiup}^{ \valpha        } (n, r, q) \CalJ_{ \beta         \gamma}  ( p^{2 \beta        -2 \valpha        } n,  r, q  )\Bigg|^2.
\end{align}
Opening the square and switching the order of summations, we obtain
\begin{equation}\label{5eq: afterCauchy-Schwarz}
		T^2_{ \valpha          \beta        } (N, X, Q) =   \underset{q_1,\shskip q_2 \shskip \sim Q}{\sum \sum}  \frac { \xiup \hskip 0.5pt \chiup (q_1 \widebar{q}_2)  } {\sqrt {q_1 q_2} } \hskip -1.5pt   \mathop{ \underset{\sstyle (r_i ,\hskip 0.5pt q_i)=1}{\sum \sum} }_{\sstyle   |r_i| \Lt  S p^{\gamma} q_i/N } T^2_{ \valpha          \beta        } (n,  r_1, r_2, q_1, q_2; N, X ),
\end{equation}
where $T^2_{ \valpha          \beta        } (n, r_1, r_2, q_1, q_2; N, X )$ is the $n$-sum given by
\begin{align}
 \sum_{n }  \mathfrak{C}_{  \chiup}^{ \valpha        } (n, r_1, q_1) \overline{\mathfrak{C}_{  \chiup}^{ \valpha        } (n, r_2, q_2)} \CalJ_{ \beta         \gamma}  ( p^{2 \beta        -2 \valpha        }  n, r_1, q_1  )  \overline{\CalJ_{ \beta         \gamma}  ( p^{2 \beta        -2 \valpha        }  n, r_2, q_2  )} U \hskip -1pt \bigg(\frac {p^{2 \beta        -2 \valpha        } n} { MX  }\bigg) .
\end{align}
We then apply the Poisson summation with modulus $p^{ \valpha        } q_1 q_2$ to transform this $n$-sum into
\begin{align}\label{5eq: n-sum after Poisson}
	\frac {MX} {p^{2 \beta        -2 \valpha        }  } \sum_{n}  \mathfrak{E}_{\chiup}^{ \valpha        } (n; r_{1},r_{2},q_1,q_2)  \CalL_{ \beta        \gamma}  \bigg( \frac {M Xn} { p^{2 \beta        -  \valpha        } q_1 q_2} ;r_{1},r_{2},q_1,q_2  \bigg),
\end{align}
where the character sum $\mathfrak{E}_{\chiup}^{ \valpha        } (n) \hskip -1pt	= \hskip -1pt	 \mathfrak{E}_{\chiup}^{ \valpha        } (n; r_{1},r_{2},q_1,q_2) \hskip -1pt$ is given by
\begin{align}\label{5eq: D character sum}
\mathfrak{E}_{\chiup}^{ \valpha        } (n) \hskip -1pt	= \hskip -1pt	   \frac 1 {p^{ \valpha        } q_1  q_2} \hskip -1pt	 \sum_{b (\mod p^ \valpha         q_1 q_2)} \hskip -1pt	  \mathfrak{C}_{  \chiup}^{ \valpha        } (b, r_1, q_1) \overline{\mathfrak{C}_{  \chiup}^{ \valpha        } (b, r_2, q_2)} e \bigg(\frac{n b} {p^ \valpha         q_1 q_2}\bigg) \hskip -1pt	,
\end{align}
and the integral $ \CalL_{ \beta        \gamma}  (x) \hskip -1pt	= \hskip -1pt	  \CalL_{ \beta        \gamma}  (x;r_{1},r_{2},q_1,q_2 )  $ is given by
\begin{equation}\label{5eq: L integral}
	\begin{split}
		\CalL_{ \beta        \gamma}  (x) \hskip -1pt = \hskip -2pt \int_{0 }^{\infty} \hskip -1pt  U(y)  \CalJ_{ \beta        \gamma} (MXy,r_1,q_1 ) \overline{\CalJ_{ \beta        \gamma} (MXy,r_2,q_2 )}  e (- x y )\mathrm{d}y.
	\end{split}
\end{equation}	

\subsection{Analysis of the character sum $ \mathfrak{E}_{\chiup}^{ \protect\valpha        } (n)$}\label{sec: char sum E(n)} By inserting \eqref{4eq: char sum C(nrq)} into \eqref{5eq: D character sum}, we obtain
\begin{align*}
	 \mathfrak{E}_{\chiup}^{ \valpha        } (n) & =   \frac 1 {p^{ \valpha        } q_1  q_2} \underset{a_1, a_2 (\mod p^ \valpha        )}{\sumx \hskip -2pt \sumx}   \overline{\chiup} (r_1+a_1 p^{\gamma -  \valpha        }) \chiup (r_2+a_2 p^{\gamma -  \valpha        }) \\
	 & \cdot \sum_{b (\mod p^ \valpha         q_1 q_2)} e \bigg(
	  \frac {p^\gamma \widebar{p}^{2 \valpha        } \hskip 1pt \widebar{r}_2 b } {q_2} -  \frac {p^\gamma \widebar{p}^{2 \valpha        }  \widebar{r}_1 b } {q_1}
	 + \frac {\overline{a_1   q_1} b } {p^ \valpha        }
	 - \frac {\overline{a_2 q_2} b } {p^ \valpha        }
	 +\frac{n b} {p^ \valpha         q_1 q_2} \bigg).
\end{align*}
By reciprocity, the exponential $b$-sum yields two congruence relations:
\begin{align}
\label{5eq: congruence 1}	  \widebar{a}_1 & q_2 -\widebar{a}_2q_1  + n \equiv 0 \, (\mod p^ \valpha        ), \\
\label{5eq: congruence 2}	p^{\gamma- \valpha        } ( &  \widebar{r}_2    q_1  -  \widebar{r}_1  q_2) + n \equiv 0 \, (\mod q_1q_2).
\end{align}
Therefore,
\begin{align}\label{5eq: defn of E(n)}
	 \mathfrak{E}_{\chiup}^{ \valpha        } (n) = & \sumx_{a  (\mod p^ \valpha        )}   \overline{\chiup} (r_1+a  p^{\gamma -  \valpha        }) \chiup (r_2+ \overline{ \widebar{a}  q_2+n} \cdot  {q}_1 p^{\gamma -  \valpha        }) ,
\end{align}
if \eqref{5eq: congruence 2} holds, and $ \mathfrak{E}_{\chiup}^{ \valpha        } (n) = 0$ if otherwise.

\begin{lem}\label{lem: bound for C}
 Let $  \valpha         \leqslant 2 \lfloor \gamma/3 \rfloor$. Let $p$ be odd.  Define $\mathfrak{E}_{\chiup}^{ \valpha        } (n)$ as in {\rm\eqref{5eq: defn of E(n)}} with $(q_1 q_2, p) = 1$ {\rm(}here $q_1$ and $q_2$ are not necessarily prime as far as the definition is concerned{\rm)}.

{\rm(1)} If $n \nequiv 0 \, (\mod p^{ \valpha        })$, then $\mathfrak{E}_{\chiup}^{ \valpha        } (n) $ vanishes unless $ r_1 q_1 \equiv r_2 q_2 (\mod p^{  \mathrm{ord}_p(n) })  $, in which case
	\begin{equation}\label{5eq: bound for E(n)}
		\mathfrak{E}_{\chiup}^{ \valpha        } (n) \Lt 	\left\{
		\begin{split}
			&	p^{ \lceil    \valpha         /2 \rceil +   \mathrm{ord}_p(n)    }, & & \text{ if } n \nequiv 0 \, (\mod p^{ \lfloor \valpha  /2 \rfloor      }), \\
			&	p^{ \lceil    \valpha         /2 \rceil +   \mathrm{ord}_p(n) /2   }, & & \text{ if otherwise.}
		\end{split}\right.
	\end{equation}

{\rm(2)} If  $n \equiv 0 \, (\mod p^{ \valpha        })$, then $\mathfrak{E}_{\chiup}^{ \valpha        } (n)$ vanishes unless $ r_1 q_1 \equiv r_2 q_2 (\mod p^{  \valpha -1 })  $, in which case
\begin{equation}\label{5eq: E(0) =}
	 \mathfrak{E}_{\chiup}^{ \valpha        } (n) = \left\{
	 \begin{split}
	 	& \chiup (\widebar{r}_1 r_2  ) \cdot p^{\valpha-1} (p-1), & & \text{ if } r_1 q_1 \equiv r_2 q_2 (\mod p^{     \valpha     }),  \\
	 	& - \chiup ( \widebar{r}_1 r_2   ) \cdot p^{\valpha-1}, & & \text{ if otherwise}.
	 \end{split}
	 \right.
\end{equation}
\end{lem}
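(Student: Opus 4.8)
The plan is to collapse $\mathfrak{E}_{\chiup}^{\valpha}(n)$ to a single sum over $a$ modulo $p^{\valpha}$ and run a case analysis on $\nu:=\mathrm{ord}_p(n)$. Throughout I will use the standard fact that, $\chiup$ being primitive modulo $p^{\gamma}$ with $p$ odd, there is a fixed $c_{\chiup}$ coprime to $p$ with $\chiup (1 + x) = e (c_{\chiup} x / p^{\gamma})$ whenever $\mathrm{ord}_p (x) \geqslant \lceil \gamma/2 \rceil$; the hypothesis $\valpha \leqslant 2 \lfloor \gamma/3 \rfloor$ will enter only through the two inequalities $3 j \geqslant \gamma$ and $\lfloor \valpha/2 \rfloor \leqslant j$, where $j = \gamma - \valpha$. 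First I would factor $\chiup (\widebar{r}_1 r_2)$ out of \eqref{5eq: defn of E(n)} and write each $\chiup$-value as $\chiup(r_i)$ times $\chiup$ of a one-unit, reducing matters to estimating $\sumx_{a \hskip 1pt (\mod p^{\valpha})} \chiup ( \varrho (a) )$, where $\varrho (a)$ is a ratio of two factors of the shape $1 + (\text{unit depending on } a)\, p^{j}$. Expanding the geometric series --- legitimate since $3 j \geqslant \gamma$ and no division occurs at this stage --- yields $\varrho (a) = 1 + D(a)\, a\, p^{j} + O (p^{2 j})$, and the $\mathrm{ord}_p$ of the leading difference $D$ equals $\mathrm{ord}_p (r_1 q_1 - r_2 q_2)$.

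For part (2) ($p^{\valpha} \mid n$) the inner inverse collapses, $\overline{\widebar{a} q_2 + n} \equiv a \widebar{q}_2 \pmod{p^{\valpha}}$, so $\varrho (a) \equiv 1 + D a p^{j} - \widebar{r}_1 D a^2 p^{2 j} \pmod{p^{\gamma}}$ with $D = \widebar{r}_2 \widebar{q}_2 q_1 - \widebar{r}_1$. When $\mathrm{ord}_p (D) \leqslant \valpha - 2$, I would show $\mathfrak{E}_{\chiup}^{\valpha}(n) = 0$ by substituting $a \mapsto a + p^{\valpha - 1 - \mathrm{ord}_p (D)}$, a permutation of the units modulo $p^{\valpha}$: this multiplies $\varrho (a)$ by a quantity $\equiv 1 + (\text{unit}) p^{\gamma - 1}$, hence multiplies the summand by the fixed nontrivial $p$-th root of unity $e (c_{\chiup} (\text{unit})/p)$. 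When $\mathrm{ord}_p (D) = \valpha - 1$ only the term at level $p^{\gamma - 1}$ survives and the $a$-sum becomes the Ramanujan sum $c_p (\text{unit}) = -1$, giving $- \chiup (\widebar{r}_1 r_2) p^{\valpha - 1}$; when $\mathrm{ord}_p (D) \geqslant \valpha$ the phase is trivial and the sum is $p^{\valpha - 1}(p-1)$, giving $\chiup (\widebar{r}_1 r_2) p^{\valpha - 1}(p-1)$. Since $\mathrm{ord}_p(D) = \mathrm{ord}_p(r_1q_1-r_2q_2)$, these are precisely the cases recorded in \eqref{5eq: E(0) =}.

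For part (1) ($\nu < \valpha$) I would decompose $a = b + p^{\lceil \valpha/2 \rceil} c$, with $b$ over units modulo $p^{\lceil \valpha/2 \rceil}$ and $c$ over all residues modulo $p^{\lfloor \valpha/2 \rfloor}$. The $c$-dependence of each $\chiup$-factor sits at level $p^{\hskip 0.5pt j + \lceil \valpha/2 \rceil} = p^{\gamma - \lfloor \valpha/2 \rfloor} \geqslant p^{\lceil \gamma/2 \rceil}$, so it linearizes exactly, and summing over $c$ produces $p^{\lfloor \valpha/2 \rfloor}$ times the indicator of $r_1 q_1 (1 + b \widebar{q}_2 n)^2 \equiv r_2 q_2 \pmod{p^{\lfloor \valpha/2 \rfloor}}$. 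Reducing this modulo $p^{\nu}$ forces $r_1 q_1 \equiv r_2 q_2 \pmod{p^{\nu}}$, which is the stated vanishing criterion; assume it holds. If $\nu < \lfloor \valpha/2 \rfloor$, dividing that congruence by $p^{\nu}$ leaves a quadratic polynomial in $b$ whose linear coefficient is a unit (here $p$ odd gives $\mathrm{ord}_p(2)=0$), so by Hensel's lemma there are $\Lt p^{\nu}$ solutions modulo $p^{\lfloor \valpha/2 \rfloor}$, hence $\Lt p^{\nu} p^{\lceil \valpha/2 \rceil - \lfloor \valpha/2 \rfloor}$ modulo $p^{\lceil \valpha/2 \rceil}$; bounding the residual $b$-sum by its length gives $p^{\lfloor \valpha/2 \rfloor} \cdot p^{\nu} p^{\lceil \valpha/2 \rceil - \lfloor \valpha/2 \rfloor} = p^{\lceil \valpha/2 \rceil + \nu}$, the first line of \eqref{5eq: bound for E(n)}. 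If $\lfloor \valpha/2 \rfloor \leqslant \nu < \valpha$, the congruence no longer constrains $b$; after linearizing $\chiup (1 + (\cdots) p^{j})$ the residual $b$-sum is a complete quadratic exponential sum whose phase has period $p^{\valpha - \nu}$ (its pertinent coefficients carry a factor $p^{\nu}$), and the classical evaluation of quadratic Gauss sums to $p$-power modulus bounds it by $p^{\lceil \valpha/2 \rceil - (\valpha - \nu)/2}$; thus $\mathfrak{E}_{\chiup}^{\valpha}(n) \Lt p^{\lfloor \valpha/2 \rfloor} p^{\lceil \valpha/2 \rceil - (\valpha - \nu)/2} \leqslant p^{\lceil \valpha/2 \rceil + \nu/2}$, which is the second line of \eqref{5eq: bound for E(n)}.

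The genuinely delicate step is this last sub-case. One must track at exactly which $p$-adic level each term of the expansion of $\chiup (1 + (\cdots) p^{j})$ contributes --- a quadratic correction is present because $j = \gamma - \valpha$ may be as small as $\lceil \gamma/3 \rceil$, and when $p = 3$ and $\valpha = 2\gamma/3$ even a cubic one appears --- and then verify that these corrections only perturb the exponent within the bounds above, i.e.\ are absorbed by the ceiling $\lceil \valpha/2 \rceil$ (the powers of $p$ one loses are at most $p^{\lceil\valpha/2\rceil - \valpha/2} \in \{1, p^{1/2}\}$). Everything else is routine $p$-adic bookkeeping. Since \cite[Lemma 5.2]{MS-2019} is the special case $q_1=q_2$ here, this also yields its corrected form.
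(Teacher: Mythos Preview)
Your plan is correct and matches the paper's argument in all essentials: reduce to $q_1=q_2=1$, split $a$ into low and high halves modulo $p^{\lceil\valpha/2\rceil}$, execute the high sum via the additive character $x\mapsto\chiup(1+xp^{\gamma-\lfloor\valpha/2\rfloor})$, and then bound the low sum by Hensel (when $\mathrm{ord}_p(n)<\lfloor\valpha/2\rfloor$) or by a quadratic Gauss sum (when $\mathrm{ord}_p(n)\geqslant\lfloor\valpha/2\rfloor$). The paper packages the three character-sum evaluations you need into a short auxiliary lemma and carries out part~(2) by the very same splitting rather than by your shift $a\mapsto a+p^{\valpha-1-\mathrm{ord}_p(D)}$; both routes give the claimed values.

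One stylistic difference is worth flagging. You linearize $\chiup$ through the $p$-adic logarithm of $\varrho(a)$, which is why you worry about a cubic correction at $p=3$, $\valpha=2\gamma/3$. The paper instead computes the ratio $u_\delta(a_0)=(r_1+a_0p^{\gamma-\valpha})^{-1}(r_2+\overline{\bar a_0+p^{\delta}}\,p^{\gamma-\valpha})$ \emph{directly} modulo $p^{\gamma}$, using the previously established congruence $r_1\equiv r_2\pmod{p^{\lfloor\valpha/2\rfloor}}$ to cancel the cross term at level $p^{2(\gamma-\valpha)}$; the outcome is exactly $u+v' w a_0 p^{\gamma-\lfloor\valpha/2\rfloor}+w'a_0^2 p^{\gamma-\valpha+\delta}$ with unit $w'$, so the residual $a_0$-sum is genuinely quadratic and no cubic issue arises. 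Your approach would still go through, but the direct expansion is cleaner and removes the need for the case analysis you anticipate. (Minor slip: your congruence condition should read $r_1q_1\equiv r_2q_2(1+b\bar q_2 n)^2$ rather than $r_1q_1(1+b\bar q_2 n)^2\equiv r_2q_2$; this does not affect the vanishing criterion or the solution count.)
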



\vskip 5pt

\begin{proof}\footnote{Note that there is a careless mistake in Lemma 5.2 of \cite{MS-2019}: The  $ \valpha q \hskip 1pt  \overline{n + q'} $ in their character sum should be $ \valpha q  \hskip 1pt  \overline{ {\valpha} n + q'} =  q \hskip 1pt  \overline{ n + \widebar{\valpha} q'}$. As such, a proof of Lemma \ref{lem: bound for C} is needed here (it is actually more involved).}
	First of all, since $ \mathfrak{E}_{\chiup}^{ \valpha        } (n; r_1, r_2, q_1, q_2) = \chiup ( {q}_1 \widebar{q}_2)  {\mathfrak{E}}_{\chiup}^{ \valpha        } (n; r_1 \widebar{q}_2, r_2  \widebar{q}_1, 1, 1 )$, we may assume $q_1=q_2 = 1$, that is,
	\begin{align}\label{5eq: tilde E(n)}
		 {\mathfrak{E}}_{\chiup}^{ \valpha        } (n) =  \sumx_{a  (\mod p^ \valpha        )}   \overline{\chiup} (r_1+a  p^{\gamma -  \valpha        }) \chiup (r_2+ \overline{ \widebar{a} + n }     p^{\gamma -  \valpha        }) .
	\end{align}
Moreover, if we let  $\delta = \mathrm{ord}_p (n)$, then it can be assumed that $n = p^{\delta}$ if $\delta < \valpha$ (by $   a \ra  \overline{n/p^{\delta}} \cdot  a$) or $n = 0$ if otherwise.

For simplicity, suppose that 
$ \valpha         = 2 \nu$ is even with $0 < \nu \leqslant \gamma/3$; the odd case may be treated in the same way. Note that Lemma \ref{lem: bound for C} is trivial for $ \valpha         = 0, 1$. In our later analysis, we shall always split  $a (\mod p^{2\nu})$ into
\begin{align}\label{5eq: a = a0+a1p}
	a = a_0 + a_1 p^{\nu} , \qquad a_0, a_1 (\mod p^{\nu}), \, (a_0, p) = 1.
\end{align}

The next   lemma will be very helpful. 

 \begin{lem}\label{lem: sum of chi}
Let $ \mu < \nu  \leqslant \gamma/2$. 	
Then for $(\varw, p) = 1$ we have
 	\begin{equation}\label{5eq: sum of chi}
 		\sum_{a_1 (\mod p^{\nu})} \chiup (u + \varv \varw  a_1 p^{\gamma - \nu}) = \left\{ \begin{split}
 			&\chiup (u) \cdot p^{\nu} , & & \text{ if } p^{\nu} | \varv, \\
 			& 0, & & \text{ if otherwise},
 		\end{split}\right.
 	\end{equation}
 \begin{equation}\label{5eq: sumx of chi}
 	\sumx_{a_0 (\mod p^{\nu})} \chiup (u + \varv \varw  a_0 p^{\gamma - \nu}) = \left\{ \begin{split}
 		&\chiup (u) \cdot p^{\nu-1} (p-1), & & \text{ if } p^{\nu} | \varv, \\
 		& - \chiup (u) \cdot   p^{\nu-1}, & & \text{ if } p^{\nu-1} \| \varv, \\
 		& 0, & & \text{ if otherwise},
 	\end{split}\right.
 \end{equation}
and for odd $p $  and  $(\varw  \varw' , p ) = 1$ we have
\begin{equation}\label{5eq: sumx of chi, 3}
	\Bigg| \  \sumx_{a_0 (\mod p^{\nu})} \chiup (u + \varv \varw   a_0 p^{\gamma - \nu} + \varw' a_0^2 p^{\gamma-\nu+\mu})\Bigg| = \left\{  \begin{split}
		&	p^{(\nu+\mu)/ 2}, & & \text{ if } p^{\hskip 0.5pt \mu} \| \varv, \\
			& 0, & & \text{ if otherwise}.
		\end{split} \right.
	\end{equation}

 \end{lem}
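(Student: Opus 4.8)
The plan is to prove Lemma \ref{lem: sum of chi} by exploiting the structure of $\chiup$ as a character of the cyclic (or near-cyclic) group $(\mathbb Z/p^{\gamma})^{\times}$, together with $p$-adic Taylor expansions of the character. The unifying idea is: for $(\varw, p)=1$ and $t$ ranging over a residue class mod $p^{\nu}$, the shifts $u \mapsto u(1 + \varv \varw \widebar{u} a p^{\gamma-\nu})$ exhaust (with multiplicity) a coset of the subgroup $\{1 + p^{\gamma-\nu} z : z (\mod p^{\nu})\}$ of $(\mathbb Z/p^{\gamma})^{\times}$, on which $\chiup$ restricts to an additive character in the variable $z$; since $\chiup$ is primitive of modulus $p^{\gamma}$, this restriction is nontrivial. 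The sum over $a$ is then a complete additive character sum in $a$, which vanishes unless the additive character is trivial, i.e. unless $p^{\nu} \mid \varv$, in which case it equals the full count. This handles \eqref{5eq: sum of chi} directly.

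For \eqref{5eq: sumx of chi} the only change is that $a_0$ runs over the units mod $p^{\nu}$ rather than all residues, so the relevant complete sum is a Ramanujan-type sum $\sumx_{a_0 (\mod p^{\nu})} e(\vartheta a_0 / p^{\nu})$ for a suitable $\vartheta$ determined by $\varv$; the standard evaluation of Ramanujan sums gives $p^{\nu-1}(p-1)$ when $p^{\nu}\mid\varv$, $-p^{\nu-1}$ when $p^{\nu-1}\|\varv$, and $0$ otherwise, matching \eqref{5eq: sumx of chi}. The point to be careful about is that $\widebar{u}$ and $\varw$ are units, so multiplying $\varv$ by $\varw\widebar{u}$ does not change its $p$-adic valuation; hence the condition is purely on $v_p(\varv)$ as claimed. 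One should also note that for $p=2$ the group $(\mathbb Z/2^{\gamma})^{\times}$ is not cyclic, but since $\mu<\nu\leqslant\gamma/2$ the relevant subgroup $1+2^{\gamma-\nu}\mathbb Z/2^{\gamma}$ still lies in the cyclic part $1+4\mathbb Z/2^{\gamma}$, so the argument survives for the first two formulas (only \eqref{5eq: sumx of chi, 3} is restricted to odd $p$).

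For the third and genuinely new identity \eqref{5eq: sumx of chi, 3}, the presence of the quadratic term $\varw' a_0^2 p^{\gamma-\nu+\mu}$ means that after writing $\chiup$ additively on the coset $1 + p^{\gamma-\nu}\mathbb Z/p^{\gamma}$, the exponent becomes, modulo $p^{\nu}$, a quadratic polynomial in $a_0$ of the shape $c_1 a_0 + c_2 a_0^2 p^{\mu} + \text{(lower order in } a_0^{-1}, a_0^{-2}\ldots)$ — more precisely, expanding $\log_p$ of the argument one gets a leading linear term with coefficient of valuation $v_p(\varv)$ and a quadratic term with coefficient of valuation exactly $\mu$ (here oddness of $p$ is used so that $2$ is a unit and the $a_0^2$ coefficient is genuinely of valuation $\mu$, not $\mu+v_p(2)$). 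Over the unit group mod $p^{\nu}$ this is a \emph{quadratic Gauss sum}: completing the square (legitimate since $p$ is odd) reduces it to the classical Gauss sum $\sumx_{a_0} e\big((\text{unit})\,a_0 / p^{\nu-\mu}\big)$ twisted by a linear term, whose modulus is $p^{(\nu+\mu)/2}$ when the linear term after completion sits at the right level — this happens exactly when $p^{\mu}\|\varv$ — and which vanishes otherwise because the completed-square linear coefficient falls into a range where the Ramanujan-type sum is zero. I expect the main obstacle to be the bookkeeping of $p$-adic valuations in the Taylor expansion of $\chiup(1 + p^{\gamma-\nu}(\cdots))$: one must track which terms of the expansion land in the range $(\mod p^{\nu})$ that matters, verify the linear and quadratic coefficients have the asserted valuations $v_p(\varv)$ and $\mu$ respectively, and confirm that all higher-degree-in-$a_0$ contributions are of valuation $\geqslant \nu$ and hence negligible — this last point is where the hypothesis $\mu < \nu \leqslant \gamma/2$ (equivalently $\valpha \leqslant 2\lfloor\gamma/3\rfloor$ in the application) is consumed, since it guarantees $3(\gamma-\nu) \geqslant \gamma$ so cubic terms vanish mod $p^{\gamma}$ entirely.
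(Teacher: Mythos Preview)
Your approach is essentially the paper's: the paper normalizes to $u=\varw=1$, records that $\psiup(x)=\chiup(1+xp^{\gamma-\nu})$ is a primitive additive character modulo $p^{\nu}$ (exactly the additivity you extract via the $p$-adic logarithm), after which \eqref{5eq: sum of chi} is orthogonality, \eqref{5eq: sumx of chi} a Ramanujan sum, and \eqref{5eq: sumx of chi, 3} the quadratic Gauss sum $\sumx_{a_0}\psiup(\varv a_0+\varw'p^{\mu}a_0^2)$, which the paper evaluates by reducing the modulus by $(\varv,p^{\mu})$ and invoking \cite[Lemmas~12.2,~12.3]{IK} --- your completing-the-square is the same manoeuvre. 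Two small corrections to your write-up: there are no $a_0^{-1},a_0^{-2}$ terms anywhere (the phase is a genuine quadratic polynomial in $a_0$, not a Laurent one), and the vanishing of the higher Taylor contributions uses $2(\gamma-\nu)\geqslant\gamma$ (i.e.\ $\nu\leqslant\gamma/2$), not $3(\gamma-\nu)\geqslant\gamma$ --- it is already the $t^{2}/2$ term of $\log_p(1+t)$ that must die modulo $p^{\gamma}$.
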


To prove Lemma \ref{lem: sum of chi}, one may let $u = \varw   = 1$ and use the fact that $\psiup (x) = \chiup (1+ x p^{\gamma - \nu})$ is a primitive additive character modulo $p^{\nu}$. Note that \eqref{5eq: sum of chi} is just the orthogonal relation, while \eqref{5eq: sumx of chi} is reduced to an evaluation of Ramanujan sums (see \cite[(3.3)]{IK}). For \eqref{5eq: sumx of chi, 3}, we have the Gauss sum
\begin{align*}
	\sumx_{a_0 (\mod p^{\nu})} \psiup (\varv a_0 + \varw' p^{\hskip 0.5pt \mu} a_0^2 ).
\end{align*}   To evaluate this, we first reduce the modulus of $\psiup$ by $(\varv, p^{\hskip 0.5pt \mu})$, and then resort to \cite[Lemma 12.2, 12.3]{IK}. Their $h (y)$ is  linear in our case, so it is easy to see (with $p$ odd) that the sum is non-zero if and only if $ p^{\hskip 0.5pt \mu} \| \varv $, in which case the Gauss sum above has norm $ p^{(\nu+\mu)/ 2} $.

\vskip 5pt

\subsubsection*{Proof of Lemma {\rm\ref{lem: bound for C} (2):} Case $n \equiv 0 \, (\mod p^{2\nu})$}   In this case, it is clear that  $ {\mathfrak{E}}_{\chiup}^{2\nu}(n) =  {\mathfrak{E}}_{\chiup}^{2\nu} (0)$ (see \eqref{5eq: tilde E(n)}) is equal to
\begin{align}\label{5eq: n=0}
	 \sumx_{a (\mod p^{2\nu}) }  \widebar{\chiup} (r_1 + a p^{\gamma - 2\nu}) \chiup (r_2 + {a} p^{\gamma - 2\nu}).
\end{align}
By inserting $a =   a_0 + a_1 p^{\nu} $ (as in \eqref{5eq: a = a0+a1p}), and using the congruence relation
\begin{align}\label{5eq: congruence r1+...}
	\overline{r_1 + a_0 p^{\gamma - 2\nu} + a_1 p^{\gamma - \nu} }\equiv (\overline{r_1 + a_0 p^{\gamma - 2\nu}  })^2   ( r_1 + a_0 p^{\gamma - 2\nu} -  a_1 p^{\gamma - \nu}  )  (\mod p^{\gamma}),
\end{align}
we obtain
\begin{align}\label{5eq: case n=0, after simpl}
{\mathfrak{E}}_{\chiup}^{2\nu} (0) =	\sumx_{a_0 (\mod p^{\nu})} \sum_{a_1 (\mod p^{\nu})} \chiup  (u_0 (a_0) + \varv  \cdot \varw  (a_0) a_1 p^{\gamma - \nu}  ) ,
\end{align}
where
\begin{align*}
	u_0 (a_0) = (\overline{r_1 + a_0 p^{\gamma - 2\nu}   })  (r_2 & + a_0 p^{\gamma - 2\nu}   ) , \quad \varv    = r_1 - r_2, \quad  \varw  (a_0) = (\overline{r_1 + a_0 p^{\gamma - 2\nu}})^2.
\end{align*}
Note that 
$  \gamma \leqslant 2 \gamma-3\nu $ (or $ \nu \leqslant \gamma/3$) is indeed needed here (and used implicitly hereafter). An application of \eqref{5eq: sum of chi} in Lemma \ref{lem: sum of chi} 
implies that ${\mathfrak{E}}_{\chiup}^{2\nu} (0)$ can be non-vanishing only if $ r_1 \equiv r_2 (\mod p^{\nu}) $, in which case we have
\begin{align}
{\mathfrak{E}}_{\chiup}^{2\nu} (0) = p^{\nu} \cdot	\sumx_{a_0 (\mod p^{\nu})} \chiup (u_0 (a_0)).
\end{align}
Further, it follows from $ r_1 \equiv r_2 (\mod p^{\nu}) $ that
\begin{align*}
	u_0 (a_0) & \equiv \widebar{r}_1  (1 - \widebar{r}_1 a_0 p^{\gamma - 2\nu} + \widebar{r}_1^2 a_0^2 p^{2\gamma-4\nu}  ) (r_2 + a_0 p^{\gamma - 2\nu}   )
	\\
	& \equiv \widebar{r}_1  r_2 + \widebar{r}_1^2 (r_1-r_2) a_0 p^{\gamma - 2\nu}  \,(\mod p^{\gamma}) ,
\end{align*}
and hence we arrive at
\begin{align}
{\mathfrak{E}}_{\chiup}^{2\nu} (0) =	p^{\nu} \cdot	\sumx_{a_0 (\mod p^{\nu})} \chiup (u + \varv ' \varw a_0 p^{\gamma - \nu}) ,
\end{align}
with
$$u  = \widebar{r}_1  r_2, \qquad \varv ' = (r_1-r_2)/p^{\nu}, \qquad  \varw  = \widebar{r}_1^2   . $$
Thus \eqref{5eq: E(0) =} is a direct consequence of \eqref{5eq: sumx of chi} in Lemma \ref{lem: sum of chi}.  Note  that ${\mathfrak{E}}_{\chiup}^{2\nu} (0)$ is non-zero if and only if $ (r_1-r_2)/p^{\nu} \equiv 0 \, (\mod p^{\nu-1}) $ or in other words $r_1 \equiv r_2 \, (\mod p^{2\nu-1}) $.

 \vskip 5pt

\subsubsection*{Proof of   Lemma {\rm\ref{lem: bound for C} (1)}}  
For  $n \nequiv 0 \, (\mod p^{  2 \nu        })$, we need to treat the cases $(n, p)=1$ or $ p$ separately.   Recall that  the ${\mathfrak{E}}_{\chiup}^{2\nu} (n) $ under consideration is given by \eqref{5eq: tilde E(n)}.

 \vskip 5pt
	
\subsubsection*{Case $(n, p) = 1$}    Assume $n = 1$.  It follows from the change of variables $a +  1  \ra a  $, $r_1 - p^{\gamma - 2\nu} \ra r_1 $, and $r_2 + p^{\gamma - 2\nu} \ra r_2  $ that
\begin{align}
	{\mathfrak{E}}_{\chiup}^{2\nu} (1) = {\sumxx_{a (\mod p^{2\nu}) }}
	 \widebar{\chiup} (r_1 + a p^{\gamma - 2\nu}) \chiup (r_2 - \widebar{a} p^{\gamma - 2\nu})  ,
\end{align}
where the $\star\hskip -1pt \star$ in the $a$-sum indicates $(a (a-1), p) = 1$.
Now let $a = a_0 + a_1 p^{\nu}$ as in \eqref{5eq: a = a0+a1p}. The congruence in \eqref{5eq: congruence r1+...} together with
\begin{align}
	\overline{a_0 + a_1 p^{\nu}} \equiv \widebar{a}_0 - \widebar{a}_0^2 a_1 p^{\nu} \, (\mod p^{2\nu}),
\end{align}
yields the expression
\begin{align}
	{\mathfrak{E}}_{\chiup}^{2\nu} (1) = {\sumxx_{a_0 (\mod p^{\nu}) }}
	 \sum_{a_1 (\mod p^{\nu}) } \chiup  (u  (a_0) + \varv (a_0)   \varw  (a_0) a_1 p^{\gamma - \nu}  ),
\end{align}
with
\begin{align*}
	u (a_0) = (\overline{r_1 + a_0 p^{\gamma-2\nu}   })  (  r_2 - \widebar {a}_0 p^{\gamma-2\nu}   ) , \qquad   \varv (a_0)   =   r_1 \widebar{a}_0^2 - r_2  .
\end{align*}
It follows from \eqref{5eq: sum of chi} in Lemma \ref{lem: sum of chi} that the character sum is equal to
\begin{align}
{\mathfrak{E}}_{\chiup}^{2\nu} (1) = p^{\nu} \cdot	\mathop{\sumxx_{a_0 (\mod p^{\nu}) } }_{   {a}_0^2 \equiv   r_1 \widebar{r}_2 (\mod p^{\nu}) } \chiup (u (a_0)) ,
\end{align}
and trivially bounded by $2 p^{\nu}$ as the quadratic   equation $ {a}_0^2 \equiv   r_1 \widebar{r}_2 (\mod p^{\nu})$ has at most $2$ solutions for $p$ odd. 

\subsubsection*{Case $(n, p) = p$} Assume $n = p^{\delta}$ with $0 < \delta < 2\nu$. We   need to consider
\begin{align*}
{\mathfrak{E}}_{\chiup}^{2\nu} (p^{\delta}) =	\sumx_{a  (\mod p^{2\nu})}   \overline{\chiup} (r_1+a  p^{\gamma - 2\nu}) \chiup (r_2+ \overline{ \widebar{a} + p^{\delta} } \cdot   p^{\gamma - 2\nu}) .
\end{align*}
By
\begin{align*}
	 \overline{\widebar{a} + p^{\delta}   } = a \cdot \overline{1+  ap^{\delta}   }  \equiv a - a^2 p^{\delta} + a^3 p^{2\delta} - ... \, (\mod p^{2\nu}),
\end{align*}
and $a =   a_0 + a_1 p^{\nu} $  (see \eqref{5eq: a = a0+a1p}), we have
\begin{equation}\label{5eq: congruence a+p}
\begin{split}
	 \overline{\widebar{a} + p^{\delta}} &  \equiv  a_0 + a_1 p^{\nu}  - (a_0^2 + 2 a_0 a_1 p^{\nu}) p^{ \delta } + (a_0^3 + 3 a_0^2 a_1 p^{\nu}) p^{2\delta} - ...  \\
 & \equiv \overline{\widebar{a}_0 + p^{\delta}} + (\overline{1+ a_0 p^{\delta}})^2 a_1 p^{\nu}    \, (\mod p^{2\nu}).
\end{split}
\end{equation}
Similar to the cases above, by \eqref{5eq: congruence r1+...} and \eqref{5eq: congruence a+p}, we have
\begin{align}\label{key}
	{\mathfrak{E}}_{\chiup}^{2\nu} (p^{\delta}) = {\sumx_{a_0 (\mod p^{\nu}) }}  \sum_{a_1 (\mod p^{\nu}) } \chiup  (u_{\delta}  (a_0) + \varv_{\delta} (a_0)   \varw   (a_0) a_1 p^{\gamma - \nu}  ),
\end{align}
with
\begin{align*}
	u_{\delta}  (a_0) = \overline{(r_1 + a_0 p^{\gamma-2\nu})   } & (r_2 + \overline{\widebar{a}_0 + p^{\delta}}  p^{\gamma-2\nu}   ) , \qquad  	\varv_{\delta} (a_0)   = r_1 (\overline{1+ a_0 p^{\delta}})^2  - r_2 .
\end{align*}
Next we apply  \eqref{5eq: sum of chi} in Lemma \ref{lem: sum of chi} in two different cases. Our goal is to prove that the character sum ${\mathfrak{E}}_{\chiup}^{2\nu} (p^{\delta})$ is non-zero only if $ r_1 \equiv r_2 (\mod p^{\delta}) $, and bounded by $  p^{\nu + \delta/2}  $ for $ \delta \geqslant \nu $ or by $  p^{\nu + \delta}  $ for $ \delta < \nu $.

For $ \delta \geqslant \nu $, since $\varv_{\delta} (a_0) $ is reduced to $\varv = r_1 - r_2$ modulo $p^{\nu}$, by  \eqref{5eq: sum of chi} we have
\begin{align}
	{\mathfrak{E}}_{\chiup}^{2\nu} (p^{\delta}) = p^{\nu}\cdot	{\sumx_{a_0 (\mod p^{\nu}) }}  \chiup (u_{\delta} (a_0))
\end{align}
under the condition $r_1 \equiv r_2 (\mod p^{\nu})$. Now
\begin{align*}
	u_{\delta} (a_0) & \equiv \widebar{r}_1^2 (r_1 - a_0 p^{\gamma - 2\nu} + \widebar{r}_1 a_0^2 p^{2\gamma-4\nu}  ) (r_2 + a_0 p^{\gamma - 2\nu}   + a_0^2 p^{\gamma - 2\nu +\delta})
	\\
	& \equiv \widebar{r}_1  r_2 + \widebar{r}_1^2 (r_1-r_2) a_0 p^{\gamma - 2\nu} + \widebar{r}_1  a_0^2 p^{\gamma - 2\nu +\delta} \,(\mod p^{\gamma}) .
\end{align*}
Then we  arrive at
\begin{align}
{\mathfrak{E}}_{\chiup}^{2\nu} (p^{\delta}) =	p^{\nu} \cdot	\sumx_{a_0 (\mod p^{\nu})} \chiup (u + \varv ' \varw  a_0 p^{\gamma - \nu} + \varw'  a_0^2 p^{\gamma -2\gamma +\delta} ) ,
\end{align}
with
$$u  = \widebar{r}_1  r_2, \qquad \varv ' = (r_1-r_2)/p^{\nu}, \qquad  \varw  = \widebar{r}_1^2, \qquad \varw'  = \widebar{r}_1   . $$
Thus  \eqref{5eq: sumx of chi, 3} in Lemma \ref{lem: sum of chi} implies that this sum is  zero unless  $ (r_1-r_2)/p^{\nu} \equiv 0 \, (\mod p^{\delta-\nu}) $ (in other words $r_1 \equiv r_2 \, (\mod p^{\delta}) $) and  bounded by $p^{\nu + \delta/2}$.  

For $ \delta < \nu $, one has $    (1+a_0p^{\delta})^2 r_2 \equiv r_1 (\mod p^{\nu}) $, and this yields $r_1 \equiv r_2 (\mod p^{\delta})$ together with  $ 2 a_0 + a_0^2 p^{\delta}    \equiv (r_1 \widebar{r}_2 -1)/p^{\delta} (\mod p^{\nu-\delta}) $. Consequently,   \eqref{5eq: sum of chi} yields the sum
\begin{align}
{\mathfrak{E}}_{\chiup}^{2\nu} (p^{\delta}) =	p^{\nu} \cdot \mathop{\sumx_{a_0 (\mod p^{\nu})}}_{2 a_0 + a_0^2 p^{\delta}    \equiv (r_1 \widebar{r}_2 -1)/p^{\delta} (\mod p^{\nu-\delta})  } \chiup (u_{\delta} (a_0)).
\end{align}
By Hensel's lemma, the sum has at most $  p^{\delta}$ many terms, so it is bounded by $   p^{\nu+\delta} $.
\end{proof}

\subsection{Bounds for the integral $  \CalL_{ \beta        \gamma}  (x)$}

Recall that the integral  $\CalL_{ \beta        \gamma} (x)$ is defined by {\rm\eqref{4eq: integral J(yrq)}} and {\rm\eqref{5eq: L integral}}.

\begin{lem}\label{lem: the integral I}
	Let $S, T, K, Q > 1$, $N > p^{2\gamma - 2  \beta        }$ be parameters with  $N^{\vepsilon} < K \Lt T \Lt S$ and $ p^{\hskip 0.5pt \beta        } Q > {N^{1+\vepsilon}} / { K } $. 
	Let $q_i \sim Q$ and $|r_i  | \Lt S p^{\gamma} Q/N$ {\rm(}$i = 1, 2${\rm)}. Let $ \Var (V_{\scriptscriptstyle \natural}) \Lt 1$.  Suppose that  $  \phi^{(j)} (\varv) \Lt  1 $ for $j = 2, 3$ and that $ |\phi'' (\varv)| \Gt 1 $ for all $\varv \in (1/2, 5/2)$.
	
	{\rm (1)} We have $\CalL_{ \beta        \gamma} (x) = O (N^{-A})$ if $ |x| \geqslant K  $.
	
	{\rm(2)} Assume that $K^2 / T > N^{\vepsilon}$. For  $ K^2 / T  \Lt |x| < K $, we have
	\begin{align}\label{5eq: bound for I(x)}
		\CalL_{ \beta        \gamma} (x) \Lt  \frac {1}  {\textstyle T \hskip -1pt \sqrt  {|x|}   } .
	\end{align}
	{For $ |x| \Lt K^2 / T $, we have
		\begin{align}\label{5eq: bound for I(x), 2}
			\CalL_{ \beta        \gamma} (x) \Lt \frac  {1}   T.
	\end{align} }
	
	{\rm(3)} Let $q_1 = q_2 = q$.  Then
	\begin{align}\label{5eq: bound for I(0)}
		\CalL_{ \beta        \gamma} (0) \Lt   \min \left\{  \frac 1 T, \frac { S p^{\gamma} Q N^{\vepsilon} } {TKN |r_1-r_2|} \right\}.
	\end{align}
\end{lem}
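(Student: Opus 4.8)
The plan is to establish (1), (2), (3) by unfolding the definition of $\CalL_{\beta\gamma}(x)$ in \eqref{5eq: L integral} and applying stationary phase analysis to the inner integrals $\CalJ_{\beta\gamma}(MXy, r_i, q_i)$, then to the outer $y$-integral. First, I would recall from the proof of Lemma \ref{lem: J(y,r,m,k,N) <} that each $\CalJ_{\beta\gamma}(MXy, r_i, q_i)$ is, after the change of variables \eqref{4eq: change of variables}, a $\CalJ_{\oo}$-integral, whose careful stationary-phase evaluation is carried out in \cite[Lemma 5.5]{AHLQ-Bessel-delta}. The point of the observation following Lemma \ref{lem: J(y,r,m,k,N) <} is precisely that I can invoke \cite[Lemma 5.5]{AHLQ-Bessel-delta} directly rather than redo its three-page proof; so I would match parameters, noting $h \sim H = p^{\hskip 0.5pt \beta} Q$ and $h_{\oo} = h^2/d$, $N_{\oo} = Nh^2/d^2$ with the relevant $d = p^{\gamma} q$, and check that the hypotheses ($N_{\oo} > 1$, i.e.\ $N > p^{2\gamma - 2\beta}$, and the lower bound on $H$ in \eqref{4eq: assumption on K}, which rephrases as $p^{\hskip 0.5pt \beta} Q > N^{1+\vepsilon}/K$) are met. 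From that lemma I extract: (i) $\CalJ_{\beta\gamma}(MXy, r, q)$ has a stationary point in $y$ only when $r$ lies in a short window, (ii) on that window it equals a main term of size $\asymp T^{-1/2}$ times an oscillatory factor $e(\Phi_r(y))$ with $\Phi_r$ essentially linear in $y$ with derivative $\asymp$ something controlled by $r$, plus a negligible error.

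For part (1), I would insert the two stationary-phase expansions of $\CalJ_{\beta\gamma}(MXy,r_1,q_1)$ and $\overline{\CalJ_{\beta\gamma}(MXy,r_2,q_2)}$ into \eqref{5eq: L integral}, so that the $y$-integrand becomes $U(y)$ times a smooth amplitude of size $\asymp 1/T$ times $e(\Phi_{r_1}(y) - \Phi_{r_2}(y) - xy)$. The total phase derivative is then of the form $-x + (\text{bounded-in-}K\text{ quantity})$; since the $r_i$ are confined to $|r_i| \Lt S p^{\gamma} Q / N$ and, with $X = H^2 K^2/N$, one checks the contribution of the $\Phi_{r_i}$-derivatives is $O(K)$, repeated integration by parts (Lemma \ref{lem: staionary phase} with $R \asymp |x|$, $Y \asymp K$, $P = U = 1$) kills the integral to $O(N^{-A})$ as soon as $|x| \geqslant K$. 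For part (2), when $K^2/T \Lt |x| < K$ the phase $\Phi_{r_1} - \Phi_{r_2} - xy$ has a stationary point in $y$; applying the second-derivative test (the $1$-dimensional one, Lemma A.2 of \cite{AHLQ-Bessel-delta}, needing only bounded variation of the amplitude), whose second derivative comes from the $\phi''$-term and is $\asymp T/K^2 \cdot$(something) — more precisely after tracking the scalings the relevant second derivative is $\asymp T |x| / K^2$ or one simply gets $\asymp |x|$-type curvature — yields the gain $1/\sqrt{T|x|}$ on top of the $1/T$ amplitude... I would be careful here: the cleanest route is to note that $\CalL_{\beta\gamma}(x)$ is itself, up to scaling, a $\CalL$-type integral already estimated in \cite[Lemma 5.5]{AHLQ-Bessel-delta}, and simply transcribe that bound via the parameter dictionary. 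For the regime $|x| \Lt K^2/T$ there is no stationary point gain and the trivial bound $\|U\|_1 \cdot \|\CalJ_{\beta\gamma}\|_{\infty}^2 \asymp 1/T$ from Lemma \ref{lem: J(y,r,m,k,N) <} suffices.

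For part (3), with $x = 0$ and $q_1 = q_2 = q$, the $y$-integral is $\int U(y) \CalJ_{\beta\gamma}(MXy,r_1,q)\overline{\CalJ_{\beta\gamma}(MXy,r_2,q)}\,\nd y$; the trivial bound is $1/T$ by Lemma \ref{lem: J(y,r,m,k,N) <}. To get the second term in \eqref{5eq: bound for I(0)}, I would again substitute the stationary-phase main terms: the residual phase is $\Phi_{r_1}(y) - \Phi_{r_2}(y)$, whose $y$-derivative is proportional to $(r_1 - r_2)N/(p^{\gamma} q)$ times a factor, i.e.\ of size $\asymp |r_1-r_2|\cdot N/(p^{\gamma}Q) \cdot (\text{scaling})$; comparing with the $S p^{\gamma} Q/N$ window for the $r_i$ and using $X = H^2K^2/N$, integration by parts yields a saving of $|x=0|$-irrelevant but instead a saving $\asymp S p^{\gamma} Q N^{\vepsilon}/(K N |r_1 - r_2|)$ over the $1/T$ bound, giving the minimum in \eqref{5eq: bound for I(0)}. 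The main obstacle I anticipate is bookkeeping: correctly propagating the three changes of variables (Voronoi scaling, Poisson scaling, and the $\CalJ_{\oo}$-normalization \eqref{4eq: change of variables}) so that the parameters fed into \cite[Lemma 5.5]{AHLQ-Bessel-delta} are exactly right, and confirming that the hypotheses $N^{\vepsilon} < K \Lt T$, $N > p^{2\gamma-2\beta}$, $p^{\hskip 0.5pt \beta}Q > N^{1+\vepsilon}/K$ are precisely what that lemma requires — rather than any genuinely new oscillatory-integral estimate.
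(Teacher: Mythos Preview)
Your proposal is essentially correct and lands on the same approach as the paper: reduce $\CalL_{\beta\gamma}$ to the $\CalL$-integral of \cite[Lemma~5.5]{AHLQ-Bessel-delta} via the change of variables \eqref{4eq: change of variables} (concretely $N_{\oo} = N/p^{2\gamma-2\beta}$, $q_{\oo\,i} = p^{2\beta-\gamma}q_i$, $Q_{\oo} = p^{2\beta-\gamma}Q$) and then simply quote that lemma. The paper's proof is two sentences long precisely because the stationary-phase sketches you wrote out for (1), (2), (3) are \emph{not} needed --- that is the whole point of the observation following Lemma~\ref{lem: J(y,r,m,k,N) <} --- so you should drop them. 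One small point you glossed over: in \cite[Lemma~5.5]{AHLQ-Bessel-delta} the bound for (3) is stated with $T$ in the numerator (their $|r_i| \Lt T P/N$), whereas here $|r_i| \Lt S p^{\gamma}Q/N$ with $S \Gt T$; the paper notes explicitly that their proof of (3) goes through verbatim with $S$ in place of $T$, and you should say this rather than re-derive it.
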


\begin{proof}
	This lemma is a generalization of \cite[Lemma 5.5]{AHLQ-Bessel-delta}---if we let $ p^{\hskip 0.5pt \beta        } = p^{\gamma} =1$  and $S = T$, then we arrive at their Lemma 5.5\footnote{Note that $p_i \sim P$ in \cite{AHLQ-Bessel-delta} corresponds to our $p^{\hskip 0.5pt \beta        } q_i \sim p^{\hskip 0.5pt \beta        } Q$.}. It is easy to see that their proof for (3) is still valid if several $T$ were replaced by $S$ provided that $S \Gt T$. By the simple observation  in the proof of Lemma \ref{lem: J(y,r,m,k,N) <}, this lemma is actually a consequence of  (the $S$-version of) \cite[Lemma 5.5]{AHLQ-Bessel-delta} by the change of variables $ N_{\oo} = N/ p^{2\gamma -2  \beta        } $, $q_{\oo \hskip 0.5pt i} = p^{2 \beta        - \gamma} q_i $, and $Q_{\oo} =  p^{2 \beta        -\gamma} Q$ (see \eqref{4eq: change of variables}).
\end{proof}

\begin{rem}
	By checking the proof of Lemma {\rm 5.5} in \cite{AHLQ-Bessel-delta}, it seems that the condition  $ p^{\hskip 0.5pt  \beta        } Q > {N^{1+\vepsilon}} / { K } $  arising from the Bessel $\delta$-method {\rm(}see {\rm\eqref{4eq: assumption on K}}{\rm)} is redundant---we keep it here only for the sake of safety.
\end{rem}

	\subsection{Estimates for $ {S}_{ {\protect\valpha}\hskip 0.5pt  \beta }^{1} (N, X, Q)$.}
Now we are ready to  estimate    $ {S}_{ {\valpha}\hskip 0.5pt  \beta }^{1} (N, X, Q)$. 
In view of \eqref{5eq: S1 after Cauchy}--\eqref{5eq: n-sum after Poisson}, \eqref{5eq: congruence 2}, and Lemma~\ref{lem: the integral I} (1), we have
\begin{align}\label{5eq: S<Sdiag+Soff}
	{S}_{ {\valpha}\hskip 0.5pt  \beta }^{1} (N, X, Q)  \Lt_{g} \textstyle \hskip -1pt \sqrt {  {S}_{\mathrm{diag}}^2 (N,   X,   Q)} + \hskip -1pt \sqrt {  {S}_{\mathrm{off}}^2 (N,   X,   Q)} + N^{-A},
\end{align}
with
\begin{equation}\label{S(diag)}
	\begin{split}
		{{S}_{\mathrm{diag}}^2}(N,   X,   Q) = \frac{N^{3 } X }{p^{\gamma + 4\beta -2\valpha}  Q^{\star 2}  Q^2 K }
		\sum_{q \shskip \sim Q}
		&
		\mathop{\mathop{\mathop{\sum \sum}_{(r_1 r_2,\shskip q)=1}}_{\sstyle |r_i| \Lt  S p^{\gamma} q/N}}_{r_1\equiv \, r_2 (\mod q)} \mathfrak{E}_{\chiup}^{\valpha} (0; r_{1},r_{2},q ,q  ) \CalL_{\beta\gamma} ( 0;r_{1},r_{2},q ,q   )  	,
	\end{split}
\end{equation}
\begin{equation}\label{S(off)}
	\begin{split}
		& {S}_{\mathrm{off}}^2   (N,   X,   Q) =    \frac{N^{3}X}{p^{\gamma + 4\beta -2\valpha} {Q^{\star 2}} \hskip -1pt  Q^2 K  } \underset{q_1,\shskip q_2 \shskip \sim Q}{\sum \sum}   \mathop{\mathop{\sum \sum}_{\sstyle (r_i,q_i)=1}}_{|r_i| \Lt  S p^{\gamma} q_i/N }  \\
		\hskip 10pt  & \cdot
	    \mathop{\mathop{\sum}_{0< |n| \Lt N / p^\valpha K}}_{n \equiv p^{\gamma-\valpha} (  \widebar{r}_1  q_2-\widebar{r}_2    q_1)    (\mod q_1q_2)} \hskip -1pt   \mathfrak{E}_{\chiup}^{\valpha} (n;  r_{1},r_{2},q_1,q_2) \CalL_{\beta\gamma} \bigg(\frac{ M X n }{p^{2\beta -\valpha} q_1 q_2 };  r_{1},r_{2},q_1,q_2 \bigg)  ,
\end{split}\end{equation}
in correspondence to the cases where $n=0$ and $n\neq 0$, respectively. Note that for $n = 0$, the congruence relation in  \eqref{5eq: congruence 2} reads $  \widebar{r}_2 q_1 \equiv \widebar{r}_1 q_2 (\mod q_1 q_2)$, and it forces $q_1 = q_2$($=q$) and $r_1 \equiv r_2 (\mod q)$. Moreover, $X = p^{2\beta} Q^2 K^2 / N $ (as in \eqref{4eq: XN=P2K2}) is used here   for the range of the $n$-sum.

For $ {S}_{\mathrm{diag}}^2(N, X, Q)$, we split the sum over $r_1$ and $r_2$ according as $r_1 = r_2$ or not, and apply Lemmas~\ref{lem: bound for C} (2) and~\ref{lem: the integral I} (3). Consequently, ${S}_{\mathrm{diag}}^2(N,X,Q)$ is bounded by
\begin{align*}
 \frac{N^{3 } X }{p^{\gamma + 4\beta -2\valpha}  Q^{\star 2}  Q^2 K }	\sum_{q \shskip \sim Q}
	\Bigg( 	\mathop{\mathop{\sum}_{(r ,\shskip q)=1 }}_{|r | \Lt  S p^{\gamma} q/N}  \frac {p^{\valpha}} T
	 \hskip 1pt
	+  \sum_{\delta = 0, 1}  \mathop{\mathop{\mathop{	\mathop{\sum \sum}_{r_1 \neq r_2  }}_{ (r_1 r_2,\shskip q)=1 }}_{|r_i| \Lt  S p^{\gamma} q/N}}_{r_1\equiv \, r_2 (\mod p^{\valpha-\delta}q)}
 \frac {S p^{\gamma+\valpha-\delta} QN^\vepsilon } {TKN |r_1-r_2|}
	\Bigg)
\end{align*}
and hence
\begin{equation}\label{5eq: estimate for S diag}
	\begin{split}
		{S}_{\mathrm{diag}}^2(N,X,Q) &\Lt \frac{ N^{3 } X }{p^{\gamma+4\beta-2\valpha} Q^{\star }  Q^2 K  } \lp   \frac{S p^{\gamma} Q} {N} \frac {p^{\valpha}} T   +  \frac{S p^{\gamma} Q} {N}  \hskip 1pt \frac {S p^{\gamma } N^\vepsilon } {TKN }  \rp\\
		& \Lt \lp  {  {  K N} p^{ 3\valpha-2\beta} } +   { S } { p^{ \gamma - 2\beta +2\valpha } }  N^{\vepsilon} \rp  \frac{S}{T} \log Q.
	\end{split}
\end{equation}

To deal with $ {S}_{\mathrm{off}}^2(N, X, Q)$, we need to strengthen the condition $Q  > N^{1+\vepsilon} /p^{\hskip 0.5pt \beta} K$ (see \eqref{4eq: assumption on K}) into
\begin{align}\label{5eq: enlarge Q}
	Q >    {N^{1+\vepsilon}} / { K},
\end{align}
so that we would necessarily have $q_1\neq q_2$. Otherwise, if $q_1=q_2=q$, the congruence condition $ n \equiv p^{\gamma-\valpha} (  \widebar{r}_1  q -\widebar{r}_2    q)    (\mod q^2)  $ would imply $q | n$, but this is impossible, in view of the length $  N / p^\valpha K$ of the $n$-sum.

For ease of exposition, we shall only treat the partial sum $  {S}_{\mathrm{off}}^{\star 2}(N, X, Q)$ with the co-prime condition $(n, p) = 1$.
For general $n$, one just needs to use Lemma \ref{lem: bound for C} at   full strength.


Next, we interchange the sum over $n$ and the sums over $r_1$, $r_2$. Note that for fixed $n$, the congruence  $n \equiv p^{\gamma-\valpha} (  \widebar{r}_1  q_2-\widebar{r}_2    q_1)  \, (\mod q_1q_2)  $ splits into  $r_1\equiv \widebar{n}  p^{\gamma-\valpha} q_2  (\mod q_1)$ and $r_2\equiv -\widebar{n}  p^{\gamma-\valpha} q_1  (\mod q_2)$. By Lemma  \ref{lem: bound for C} (1) and \ref{lem: the integral I} (2),  we infer that $  {S}_{\mathrm{off}}^{\star 2}(N, X, Q)$ is bounded by
\begin{align*}
	\frac{N^{3}X}{p^{\gamma + 4\beta -2\valpha} {Q^{\star 2}} \hskip -1pt  Q^2 K  } \underset{q_1,\shskip q_2 \shskip \sim Q}{\sum \sum}
	\Bigg(& \mathop{\sum_{ (n, p) = 1 }}_{ |n| \Lt  N /p^{\valpha}T} \underset{\sstyle  \sstyle |r_i| \Lt  S p^{\gamma} q_i/N  \atop {\sstyle r_1\equiv \widebar{n} p^{\gamma - \valpha} q_2 (\mod q_1)  \atop{\sstyle r_2 \equiv -\widebar{n} p^{\gamma - \valpha} q_1 (\mod q_2)} }}{\sum \sum}   \frac{p^{ \lceil \valpha /2 \rceil }}{T}\\
	&+ \mathop{\sum_{ (n, p) = 1 }}_{N/ p^{\valpha} T  \Lt |n| \Lt  N / p^{\valpha}K} \underset{\sstyle  \sstyle |r_i| \Lt  S p^{\gamma} q_i/N  \atop {\sstyle r_1\equiv \widebar{n} p^{\gamma - \valpha} q_2 (\mod q_1)  \atop{\sstyle r_2 \equiv -\widebar{n} p^{\gamma - \valpha} q_1 (\mod q_2)} }}{\sum \sum} \frac{p^{ \lceil \valpha /2 \rceil } p^{\hskip 0.5pt \beta - \valpha/2}\sqrt{ q_{1}q_{2}}}{T\sqrt{X|n|}} \Bigg).
\end{align*}
We record here the condition in Lemma \ref{lem: the integral I} (2):
\begin{align}\label{5eq: condition on K, 2}
	K > \sqrt{T} N^{\vepsilon}.
\end{align}
When   $N \leqslant S p^{\gamma}$, we have
\begin{equation}\label{5eq. Soff N leq Sp}
	\begin{split}
		{S}_{\mathrm{off}}^{\star 2}(N,   X,   Q) &\Lt \frac{N^{3}X}{p^{\gamma + 4\beta -2\valpha}  \hskip -1pt  Q^2 K  } \cdot p^{ \lceil \valpha /2 \rceil }  \lp \frac{N }{ p^{  \valpha  } T^2 } + \frac{p^{\hskip 0.5pt \beta -   \valpha /2  } Q}{ T \sqrt X} \sqrt{\frac{N }{ p^{  \valpha  } K}} \rp \lp \frac{S p^\gamma }{N }\rp^2 \\
		&= p^{\gamma+  \lceil 3\valpha /2 \rceil  -2\beta  } \lp    { K  N}   + \frac{  NT }{ \sqrt {   K }} \rp \lp \frac S T \rp^2   .
	\end{split}
\end{equation}
When $N > S p^{\gamma}$,   the $(S p^\gamma/N)^2$ in \eqref{5eq. Soff N leq Sp} needs to be replaced by $1$. In other words, we lose $(N/S p^\gamma)^2 $. However, the loss may be reduced to $N/S p^\gamma $ if we rearrange the sums  in the following order:
\begin{align*}
 	\underset{q_{1}\sim Q}{\sum} \hskip -1pt
	\underset{\sstyle(r_{1}, q_{1})=1 \atop{\sstyle |r_1| \Lt  S p^{\gamma} q_1/N}}{\sum} \hskip -5pt
	\Bigg(
	\mathop{\underset{(n, p)=1  } {\sum}}_{ |n|\Lt  N/ p^{\valpha}T} + \mathop{\sum_{(n, p)=1   } }_{N /p^{\valpha}T \Lt |n|\Lt N/p^{\valpha}K} \Bigg) \hskip -5pt
	\underset{\sstyle q_{2}\sim Q \atop{\sstyle q_{2}\equiv r_1 n  \widebar{ p}^{\gamma-\valpha}(\mod q_{1})}}{\sum}
	\underset{\sstyle |r_2| \Lt  S p^{\gamma} q_2/N \atop{\sstyle r_{2}\equiv- \widebar{n}p^{\gamma-\valpha}q_{1}(\mod q_{2})}}{\sum} .
\end{align*}
Thus for $N > S p^{\gamma}$, we have
\begin{equation}\label{5eq. Soff N geq Sp}
	\begin{split}
		{S}_{\mathrm{off}}^{\star 2} (N,   X,   Q)
		&\Lt  	\frac{N^{3}X}{p^{\gamma + 4\beta -2\valpha} {Q^{\star }} \hskip -1pt  Q^2 K  }   \frac{S p^\gamma Q }{N } \cdot p^{ \lceil \valpha /2 \rceil }
		\lp \frac{ N }{p^{  \valpha   } T^{2}}  +  \frac{p^{\hskip 0.5pt \beta -   \valpha /2   } Q }{T\sqrt X}\sqrt{\frac{N }{p^{  \valpha   } K}} \rp\\
		&\Lt \frac {N} {p^{2\beta  - \lceil 3\valpha /2 \rceil   } T}   \lp   {K N}     +  \frac{ N T}{   \sqrt {  K }}  \rp       \frac S T    \log Q.
	\end{split}
\end{equation}
Combining \eqref{5eq. Soff N leq Sp} and \eqref{5eq. Soff N geq Sp}, we have
\begin{equation}\label{5eq: estimate for S off}
	{S}_{\mathrm{off}}^{\star 2} (N,   X,   Q)\Lt p^{\gamma+ \lceil 3\valpha /2 \rceil    -2\beta} \lp   {K N}   +  \frac{ N T}{   \sqrt { K }} \rp  \lp 1 + \frac{  N}{ S p^{\gamma} }\rp   \lp \frac S T \rp^2   \log Q.
\end{equation}

Finally, we conclude from  \eqref{5eq: S<Sdiag+Soff}, \eqref{5eq: estimate for S diag} and \eqref{5eq: estimate for S off} that
\begin{align}\label{5eq: bound for S1}
\sum_{\valpha =0}^{\beta} \hskip -1pt	{S}_{ {\valpha}\hskip 0.5pt  \beta }^{1} (N, X, Q)
\hskip -1pt	\Lt \hskip -2pt
	\bigg(\hskip -2pt    {\sqrt{ Tp^{\gamma}}}
	+ p^{\gamma/2 - \beta/4}   \bigg( \hskip -2pt \sqrt{ K N} + \frac{ \sqrt{NT}  }{ K^{1/4} } \bigg) \hskip -1pt
	\bigg(  \hskip -1pt 1 + \sqrt {\frac { N } {S  p^{\gamma } }} \bigg)  \hskip -1pt \bigg) \frac { S } {T}   N^{\vepsilon}.
\end{align}

\section{Proof of Theorem \ref{main-theorem2}}

Recall that  $\beta = 2 \lfloor \gamma/3 \rfloor$  and $S = T + \varDelta N^{\vepsilon}$ as in \eqref{2eq: nv = 2gamma/3} and \eqref{4eq: defn of S}, so the bound in \eqref{1eq: main bound} that we need to prove is translated into\footnote{Note that the $N^{\vepsilon}$ is removed from $S=  T + \varDelta N^{\vepsilon}$ in \eqref{1eq: main bound} due to our $\vepsilon$-convention.}:
 \begin{align}\label{6eq: final bound}
 	S(N) \Lt \frac {S p^{\gamma/2 - \beta/4} N^{1/2+\vepsilon}} {T^{2/3}}       + \frac { S^{1/2} N^{1+\vepsilon}} {  T^{2/3} p^{\hskip 0.5pt \beta/4} }  .
 \end{align}
Moreover, we may impose the condition (see \eqref{1eq: range of N})
\begin{align}\label{6eq: range of N}
	T^{1/3} p^{\gamma/3} < N^{1/2},
\end{align}
because if otherwise \eqref{6eq: final bound} is worse than the trivial bound $S(N)\Lt N $.

It follows from \eqref{3eq: S(N) = Sch(Q)}, \eqref{4eq: bound for S0} and \eqref{5eq: bound for S1} that
\begin{align*}
	S (N) \Lt \frac {  S \sqrt{  K} p^{\gamma /2  +  \beta         } } { Q \sqrt{T} } + \frac { S p^{\gamma/2} N^{\vepsilon} } {\sqrt{T}} + \frac {S p^{\gamma/2 - \beta/4} N^{\vepsilon}} {T} \bigg( \hskip -2pt \sqrt{ K N} + \frac{ \sqrt{NT}  }{ \sqrt[4]{K}} \bigg) \hskip -1pt
	\bigg(  \hskip -1pt 1 + \sqrt {\frac { N } {S  p^{\gamma } }} \bigg).
 \end{align*}
On choosing $ K = T^{2/3} $ and $Q = N^{1+\vepsilon}/ \sqrt{T}$, we have
\begin{align}\label{6eq: bound after KQ}
	S (N) \Lt \frac {  S  T^{1/3} p^{\gamma /2  +  \beta         }  } { N   } + \frac { S p^{\gamma/2} N^{\vepsilon} } {{T}^{1/2}} + \frac {S p^{\gamma/2 - \beta/4} N^{1/2+\vepsilon}} {T^{2/3}}       + \frac { S^{1/2} N^{1+\vepsilon}} {  T^{2/3}  p^{\hskip 0.5pt \beta/4}}
	   .
\end{align}
The required conditions in \eqref{4eq: XN=P2K2}, \eqref{5eq: enlarge Q}, and \eqref{5eq: condition on K, 2} are well justified for our choice of $K$ and $Q$. Finally, the condition \eqref{6eq: range of N} implies that  the first two terms are dominated by the third term in \eqref{6eq: bound after KQ}, so we arrive at the desired bound \eqref{6eq: final bound}.

\section{Proof of Corollary \ref{cor}}\label{sec: proof of Cor}

It suffices to prove the same estimates in \eqref{0eq: cor1} and \eqref{0eq: cor2} for the sum over dyadic segments,
\begin{equation*}
	S^\flat(N)=\sum_{N \leqslant n\leqslant 2N} \lambdaup_g(n)\chiup(n) e\lp f(n)  \rp.
\end{equation*}
To this end, choose the   $V (x) \in C_c^{\infty} [1, 2]$ in Theorem \ref{main-theorem2} so that $ V (x)\equiv 1 $ on $[1+1/\varDelta, 2-1/\varDelta ]$ and compare $ S^\flat(N) $ with $S (N)$. By   Deligne's bound \eqref{2eq: Ramanujan, 0}, we infer that
\begin{equation*}
	S^\flat(N) =S(N ) + O\lp N^{1+\vepsilon }  /  \varDelta \rp.
\end{equation*}
In order to have cleaner exponents, we consider the case   $\gamma \equiv 0 (\mod 3) $. It follows from \eqref{1eq: main bound} that
\begin{align}\label{7eq: case D<T}
	S^\flat(N) \Lt T^{1/3} p^{  \gamma  /3   }  N^{  1 / 2  +\vepsilon}    + \frac {   N^{1+\vepsilon}} {  T^{1/6}  p^{    \gamma /6 } } + \frac {N^{1+\vepsilon }} {\varDelta}
\end{align}
for $\varDelta \leqslant T$, and
\begin{align} \label{7eq: case T<D}
	S^\flat(N) \Lt \frac {\varDelta  p^{   \gamma /3    }  N^{  1 / 2  +\vepsilon}} {T^{2/3}}    + \frac { \varDelta^{1/2}  N^{1+\vepsilon}} {  T^{2/3}  p^{    \gamma / 6 } } + \frac {N^{1+\vepsilon }} {\varDelta}
\end{align}
for $\varDelta > T$. Note that in the former case, the best choice is clearly $\varDelta = T$, while in the latter case,  the optimal choice of $\varDelta$ is the one that balances the last and the first two terms.

For $T^5 < p^{\gamma}$,
\begin{itemize}
	\item [(1)] if  $N \geqslant T^{4/9}p^{10 \gamma/9}$, we choose
	$\varDelta=T^{4/9} p^{ \gamma/9}$ so that \eqref{7eq: case T<D} yields $$ S^\flat(N) \Lt \frac {N^{1+\vepsilon }}   {T^{4/9} p^{ \gamma/9}} ;  $$
	\item [(2)] if $ T^{8/3} p^{2\gamma/3} \leqslant N < T^{4/9}p^{10 \gamma/9} $, we choose $\varDelta = T^{1/3} N^{1/4} / p^{\gamma/6}$ so that \eqref{7eq: case T<D} yields $$ S^\flat(N) \Lt \frac  {p^{ \gamma/6} N^{3/4+\vepsilon}}   {T^{1/3}}; $$
	\item [(3)] if $ N < T^{8/3} p^{2\gamma/3} $, we choose $\varDelta = T$ and \eqref{7eq: case D<T} yields $$ S^\flat(N) \Lt T^{1/3} p^{  \gamma  /3   }  N^{  1 / 2  +\vepsilon}  .$$
\end{itemize}
Consequently, we obtain \eqref{0eq: cor1} by  combining these bounds.

For $ T^5 \geqslant p^{\gamma}$, we   choose $\varDelta = T$ and \eqref{0eq: cor2} follows directly from \eqref{7eq: case D<T}.

\section{Proof of Theorem \ref{main-theorem-Weyl}}\label{sec: hybrid-Weyl}

Theorem \ref{main-theorem-Weyl} is a standard consequence of Theorem \ref{main-theorem2}. Let  $t > 1$ say.  We have the following approximate functional equation (see \cite[Theorem 5.3, Proposition 5.4]{IK}):
\begin{equation*}
	\begin{split}
		L (1/2 + it, g \otimes \chiup) =& \, \sum_{n=1} ^\infty  \frac{\lambdaup_g(n)\chiup(n)}{n^{1/2+ it}} V_t \bigg( \frac{n}{\sqrt{M} p^\gamma}\bigg) \\
		& +  \epsilon(1/2+it, g\otimes \chiup) \cdot \sum_{n=1} ^\infty \frac{\overline{\lambdaup_g(n) \chiup(n)} }{n^{1/2 -it}} V_{-t}\bigg( \frac{n}{\sqrt{M}p^\gamma} \bigg),
	\end{split}
\end{equation*}
where $ | \epsilon(1/2+it, g\otimes \chiup)| = 1 $, and  $V_{\pm t} (x)$ is a smooth function with $x^jV^{(j)}_{\pm t} (x) \Lt_{j,A}(1+x/ t)^{-A}$ for any $j,A \geqslant 0$. By applying a dyadic partition of unity to the sums above, it follows that for some $ N \leqslant (t p^{\gamma} )^{1+\vepsilon}$ we have
\begin{equation*}
	L(1/2+it,g\otimes\chiup) \Lt_{\vepsilon, A}    \frac{|S(N)|}{ \sqrt N } N^{\vepsilon} + N^{-A},
\end{equation*}
where
\begin{equation*}
	\begin{split}
		S(N)=\sum_{n=1}^{\infty}\lambdaup _g(n) \chiup (n) n^{-it} V \lp \frac{n}{N} \rp,
\end{split}\end{equation*}
and $V (x ) \in C_c^{\infty}[1, 2]$ with  $V^{(j)} (x) \Lt_{j} 1$.
Thus Theorem \ref{main-theorem-Weyl} follows directly from Theorem \ref{main-theorem2} on choosing $\phi(x) = -\log x/2\pi$, $T=t$, and $\varDelta =1$.

\def\cprime{$'$} \def\cprime{$'$}

\end{document}